\definecolor{cadmiumgreen}{rgb}{0.0, 0.42, 0.24}
\newtheorem{thm}{Theorem}[section]
\newtheorem{theorem}[thm]{Theorem}
\newtheorem*{theorem*}{Theorem}
\newtheorem{proposition}[thm]{Proposition}
\newtheorem*{proposition*}{Proposition}
\newtheorem{corollary}[thm]{Corollary}
\newtheorem{lemma}[thm]{Lemma}
\theoremstyle{remark}
\newtheorem{remark}[thm]{Remark}
\newtheorem{notation}[equation]{Notation}
\theoremstyle{definition}
\newtheorem{definition}[thm]{Definition}
\DeclareMathOperator{\End}{\ensuremath{End}}
\DeclareMathOperator{\Hom}{\ensuremath{Hom}}
\DeclareMathOperator{\rank}{\ensuremath{rank}}
\DeclareMathOperator{\Jac}{\ensuremath{Jac}}
\DeclareMathOperator{\Aff}{\ensuremath{Aff}}
\DeclareMathOperator{\pic}{\ensuremath{Pic}}
\newcommand{\dual}[1]{\widehat{#1}}
\newcommand{\bR}{\mathbb{R}}
\newcommand{\bZ}{\mathbb{Z}}
\newcommand{\sL}{\mathscr{L}}
\numberwithin{equation}{section}
\subjclass[2020]{
\href{https://mathscinet.ams.org/msc/msc2020.html?t=14T20}{14T20},
\href{https://mathscinet.ams.org/msc/msc2020.html?t=11G10}{11G10},
\href{https://mathscinet.ams.org/msc/msc2020.html?t=14C25}{14C25},
\href{https://mathscinet.ams.org/msc/msc2020.html?t=14C17}{14C17},
\href{https://mathscinet.ams.org/msc/msc2020.html?t=14H40}{14H40},
\href{https://mathscinet.ams.org/msc/msc2020.html?t=14C40}{14C40}
}
\begin{document}

\title[Tropical Poincar\'e bundle, Fourier--Mukai, and Poincar\'e formula]{Tropical Poincar\'e bundle, Fourier--Mukai transform, and a generalized Poincar\'e formula}

\author{Soham Ghosh}
\address{University of Washington \\ Box 354350 \\ Seattle, WA 98195 \\ USA }
\email{\href{mailto:soham13@uw.edu}{soham13@uw.edu}}

\author{Farbod Shokrieh}
\address{University of Washington \\ Box 354350 \\ Seattle, WA 98195 \\ USA }
\email{\href{mailto:farbod@uw.edu}{farbod@uw.edu}}

\keywords{Poincar\'e bundle, real tori with integral structure, tropical Abelian varieties, Fourier--Mukai transform, tropical (co)homology, Poincar\'e formula, geometric Riemann--Roch theorem}
\date{\today}

\begin{abstract} 
We construct a tropical analogue of the Poincar\'e bundle and prove a (cohomological) Fourier--Mukai transform for real tori with integral structures. We then prove a tropical analogue of  Beauville's generalized Poincar\'e formula for polarized abelian varieties. Some consequences include a geometric Riemann--Roch theorem for tropical abelian varieties, as well as a tropical Poincar\'e--Prym formula which was recently conjectured by R\"ohrle and Zakharov.
\end{abstract}

\maketitle

\setcounter{tocdepth}{1}
\tableofcontents

\thispagestyle{empty}

\section{Introduction} \label{sec:intro}
\renewcommand*{\thethm}{\Alph{thm}}

\subsection{Background}
Let $A$ be a complex abelian variety of dimension $g$ and let $\widehat{A}\coloneqq\pic^0(A)$ be its dual. In general, $A$ and $\widehat{A}$ are not isomorphic as projective varieties. Nevertheless, there exists a fundamental duality theory relating $A$ and $\widehat{A}$ via the Fourier--Mukai transform. Classically, this is a well-understood theory for abelian varieties, first studied at the level of cohomology by Lieberman (\cite[Appendix 2]{Kleiman}), and put into the framework of derived equivalences in the seminal work of Mukai (\cite{Mukai}).

Beauville, in \cite{Beauv2, Beauv1}, considers the Fourier--Mukai theory at the level of Chow rings and studies the endomorphisms on  Chow ring induced by the $n$-multiplication maps on $A$, leading to the Beauville decomposition of Chow groups. Furthermore, Beauville computes the Fourier--Mukai transform of intersection powers of symmetric ample line bundles on abelian varieties as follows (\cite[Proposition 5]{Beauv2} -- see also \cite{CAV}*{Theorem 16.5.5}). 
\begin{theorem*}[Beauville]\label{thm:classical}
    Let $L$ be a symmetric ample line bundle on $A$ with $d=h^0(L)$ and let $\phi_L:A\rightarrow \widehat{A}$ be the corresponding polarization map. Let $F: \operatorname{Ch}^\bullet(A)_{\mathbb{Q}}\rightarrow \operatorname{Ch}^\bullet(\widehat{A})_{\mathbb{Q}}$ be the Fourier--Mukai transform on the rational Chow ring. Then the following equality holds in $\operatorname{Ch}^\bullet(\widehat{A})_{\mathbb{Q}}$, for any $0\leq p\leq g$: 
        \[ F\left(\frac{L^{\cdot p}}{p!}\right)=\frac{(-1)^{g-p}}{d}\phi_{L\ast}\left(\frac{L^{\cdot g-p}}{(g-p)!}\right) \, .\]
\end{theorem*}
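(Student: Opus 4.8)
The plan is to first collapse the $g+1$ graded identities into a single closed-form identity for exponential classes, and then to evaluate the Fourier--Mukai transform of that exponential by pulling everything back along the isogeny $\mathrm{id}_A\times\phi_L$. Write $\ell=c_1(L)\in\operatorname{Ch}^1(A)_{\mathbb{Q}}$ and $P=c_1(\mathcal{P})\in\operatorname{Ch}^1(A\times\widehat{A})_{\mathbb{Q}}$, so that $\operatorname{ch}(\mathcal{P})=e^P$ and $F(x)=p_{2\ast}(p_1^\ast x\cdot e^P)$. Summing the claimed formula over $p$ and reindexing by $q=g-p$ shows that it is equivalent to the single assertion
\[ F(e^\ell)=\tfrac{1}{d}\,\phi_{L\ast}(e^{-\ell})\qquad\text{in }\operatorname{Ch}^\bullet(\widehat{A})_{\mathbb{Q}}. \]
I would establish this closed form first and recover the individual graded pieces at the very end.

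For the computation, set $\Phi=\mathrm{id}_A\times\phi_L\colon A\times A\to A\times\widehat{A}$, a finite surjective isogeny of degree $\deg\phi_L=\chi(L)^2=d^2$. The one fact I need about the Poincar\'e bundle is its compatibility with the polarization: since $\phi_L$ sends $x$ to the class of $t_x^\ast L\otimes L^{-1}$, one has $\Phi^\ast\mathcal{P}\cong m^\ast L\otimes p_1^\ast L^{-1}\otimes p_2^\ast L^{-1}$ (the Mumford bundle), and hence $\Phi^\ast P=m^\ast\ell-p_1^\ast\ell-p_2^\ast\ell$, where $m\colon A\times A\to A$ is addition. Because $p_1\circ\Phi=p_1$, pulling the Fourier kernel back along $\Phi$ and expanding the exponential produces a clean cancellation of the $p_1^\ast\ell$ terms:
\[ \Phi^\ast\left(p_1^\ast e^\ell\cdot e^P\right)=p_1^\ast e^\ell\cdot m^\ast e^\ell\cdot p_1^\ast e^{-\ell}\cdot p_2^\ast e^{-\ell}=m^\ast e^\ell\cdot p_2^\ast e^{-\ell}. \]

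Next I would transfer this back downstairs. Using $\Phi_\ast\Phi^\ast=d^2$ together with $p_2^{A\times\widehat{A}}\circ\Phi=\phi_L\circ p_2^{A\times A}$ and functoriality of pushforward, the transform becomes
\[ F(e^\ell)=\tfrac{1}{d^2}\,\phi_{L\ast}\,p_{2\ast}\left(m^\ast e^\ell\cdot p_2^\ast e^{-\ell}\right)=\tfrac{1}{d^2}\,\phi_{L\ast}\left(e^{-\ell}\cdot p_{2\ast}(m^\ast e^\ell)\right), \]
the last equality by the projection formula. It remains to evaluate $p_{2\ast}(m^\ast e^\ell)$ on $A\times A$; here I would use the shear automorphism $\sigma(x,y)=(x+y,y)$, which satisfies $m=p_1\circ\sigma$ and $p_2\circ\sigma^{-1}=p_2$. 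Since $\sigma^\ast=(\sigma^{-1})_\ast$, functoriality gives $p_{2\ast}(m^\ast e^\ell)=p_{2\ast}(p_1^\ast e^\ell)$, which is the degree of the codimension-$g$ part of $e^\ell$, namely $\chi(L)=d$, times the fundamental class. Substituting yields $F(e^\ell)=\tfrac{1}{d}\,\phi_{L\ast}(e^{-\ell})$, as wanted.

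Finally I would extract the graded statement, and this is the step I expect to be the main obstacle: in the Chow ring (unlike in cohomology, where Lieberman's transform shifts a single grading) the closed form does not by itself separate into the $p$-th identity, because $F$ mixes codimensions. To split it I would invoke the Beauville decomposition. Since $L$ is symmetric, its class lies in the $n$-multiplication eigenspace $\operatorname{Ch}^1_{(0)}(A)$, hence $\ell^p/p!\in\operatorname{Ch}^p_{(0)}(A)$, and $F$ carries $\operatorname{Ch}^p_{(0)}(A)$ into $\operatorname{Ch}^{g-p}_{(0)}(\widehat{A})$; thus each $F(\ell^p/p!)$ is pure of codimension $g-p$. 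As $\phi_L$ is a homomorphism, $\phi_{L\ast}$ respects the decomposition, so $\phi_{L\ast}(\ell^{g-p}/(g-p)!)$ is likewise pure of codimension $g-p$. Comparing the codimension-$(g-p)$ components of the two sides of $F(e^\ell)=\tfrac{1}{d}\phi_{L\ast}(e^{-\ell})$ then isolates exactly $F(\ell^p/p!)$ on the left and $\tfrac{(-1)^{g-p}}{d}\phi_{L\ast}(\ell^{g-p}/(g-p)!)$ on the right. The only remaining bookkeeping is tracking the sign $(-1)^{g-p}$ coming from $e^{-\ell}$ and verifying the purity through the eigenspace behavior of $F$ and $\phi_{L\ast}$.
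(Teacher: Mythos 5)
The paper itself contains no proof of this statement: it is quoted as classical background, attributed to \cite[Proposition 5]{Beauv2} and \cite[Theorem 16.5.5]{CAV}, and what the paper actually proves is the tropical analogue (Theorem~\ref{mainthm} and Corollary~\ref{maincor}). Measured against the classical sources, your argument is correct and is essentially Beauville's own: the seesaw identity $(\mathrm{id}_A\times\phi_L)^*\mathcal{P}\cong m^*L\otimes p_1^*L^{-1}\otimes p_2^*L^{-1}$, transfer along the degree-$d^2$ isogeny $\mathrm{id}_A\times\phi_L$, the shear automorphism giving $p_{2*}(m^*e^{\ell})=\chi(L)\,[A]$, and the closed form $F(e^{\ell})=\tfrac{1}{d}\,\phi_{L*}(e^{-\ell})$. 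All of these steps check out, and you are right to note that symmetry of $L$ is not used until the extraction step.

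Two refinements on that extraction step, which you correctly identify as the crux. First, invoking the Beauville decomposition and the purity statement $F(\operatorname{Ch}^p_{(0)}(A))\subseteq\operatorname{Ch}^{g-p}_{(0)}(\widehat{A})$ does work and is \emph{not} circular --- that statement is proved in \cite{Beauv1} from the functional equations of $F$ (Fourier inversion and $F\circ n^*=n_*\circ F$, $F\circ n_*=n^*\circ F$), not from the proposition being proved --- but since \cite{Beauv1} postdates \cite{Beauv2} you should say this explicitly. Second, the decomposition is heavier machinery than needed: the single relation $F\circ n^*=n_*\circ F$ suffices. Symmetry gives $n^*\ell=n^2\ell$, hence $n_*F(\ell^p/p!)=n^{2p}F(\ell^p/p!)$, while $n_*\phi_{L*}(\ell^q/q!)=\phi_{L*}(n_*(\ell^q/q!))=n^{2(g-q)}\phi_{L*}(\ell^q/q!)$; fixing $n=2$, both sides of your closed form are finite sums of eigenvectors of the one operator $2_*$ with pairwise distinct eigenvalues $4^p$, so the components match term by term with no decomposition theorem at all. (Also, purity of $\phi_{L*}(\ell^{g-p}/(g-p)!)$ needs no eigenspace argument: proper pushforward between $g$-dimensional varieties preserves codimension.)

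The contrast with the paper's own route to the tropical analogue is instructive. There, the statement is proved by a hands-on computation in tropical cohomology: Leibniz expansion of $c_1(\mathscr{L})^{\wedge p}$ into minors $\det[E]_{I,J}$, the explicit description of the transform in Proposition~\ref{prop: fourierdesc}, and a matching of the two sides via Jacobi's determinant identity and the Cauchy--Binet formula. The grading problem you fight with never arises tropically, because Proposition~\ref{prop: fourierdesc} shows $F_{E_X}$ respects the bigrading (the Lieberman phenomenon, valid cohomologically); your eigenvalue bookkeeping is the price one pays for working in the Chow ring, where no K\"unneth decomposition is available, and your geometric argument is exactly what replaces the paper's explicit linear algebra in that setting.
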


The classical Poincar\'e formula relates tautological cycles and the Riemann's theta divisor on the Jacobian of a curve (see, for example, \cite[p.350]{GHPAG}, \cite[\S11.2]{CAV}). Using the previous theorem, Beauville proves a beautiful generalization as follows (\cite[Corollaire 2]{Beauv2} -- see also \cite{CAV}*{\S16.5}). Let $\star$ be the Pontryagin product on $\operatorname{Ch}^\bullet(A)$.
\begin{theorem*}[Beauville]\label{thm:genPoin}
Let $L$ be a symmetric ample line bundle on $A$ with $d=h^0(L)$ and let $c_L\coloneqq{L^{\cdot g-1}}/{(d(g-1)!)}\in \operatorname{Ch}_1(A)_\mathbb{Q}$. Then the following equality holds in $\operatorname{Ch}^p(A)_\mathbb{Q}$, for all $0\leq p\leq g$:
\[\frac{L^{\cdot p}}{p!}=d\frac{c_L^{\star g-p}}{(g-p)!}\, .\]
\end{theorem*}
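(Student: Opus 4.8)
The plan is to apply the Fourier--Mukai transform $F$ to both sides of the asserted identity, check that the two images agree, and conclude by injectivity of $F$. I will use three standard inputs: (a) $F$ is invertible, with $\widehat F\circ F=(-1)^g(-1_A)^\ast$, hence injective on $\operatorname{Ch}^\bullet(A)_{\mathbb Q}$; (b) $F$ exchanges the two products, $F(x\star y)=F(x)\cdot F(y)$; and (c) the transform formula of the first theorem above, computing $F(L^{\cdot p}/p!)$, which I invoke both for general $p$ and for $p=g-1$ (so the symmetry hypothesis on $L$ is carried throughout via (c)).

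First I would record $F(c_L)$. Writing $c_L=\frac1d\cdot\frac{L^{\cdot g-1}}{(g-1)!}$ and applying the first theorem with $p=g-1$ gives $F(c_L)=-\frac1{d^2}\,\phi_{L\ast}(L)$. Then (b) turns the transform of the right-hand side into an intersection power,
\[
F\!\left(d\,\frac{c_L^{\star g-p}}{(g-p)!}\right)=\frac{d}{(g-p)!}\,F(c_L)^{\cdot (g-p)}=\frac{(-1)^{g-p}\,d}{(g-p)!\,d^{2(g-p)}}\,\bigl(\phi_{L\ast}L\bigr)^{\cdot (g-p)},
\]
whereas the first theorem gives the transform of the left-hand side directly,
\[
F\!\left(\frac{L^{\cdot p}}{p!}\right)=\frac{(-1)^{g-p}}{d\,(g-p)!}\,\phi_{L\ast}\bigl(L^{\cdot g-p}\bigr).
\]
Matching these two expressions collapses the entire statement to the single identity
\[
\bigl(\phi_{L\ast}L\bigr)^{\cdot k}=d^{\,2k-2}\,\phi_{L\ast}\bigl(L^{\cdot k}\bigr)\qquad (k=g-p)
\]
in $\operatorname{Ch}^\bullet(\widehat A)_{\mathbb Q}$, and this identity is where I expect the main difficulty to lie.

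To establish it I would pull back along $\phi_L$ and exploit that $\phi_L^\ast$ is injective on $\operatorname{Ch}^\bullet(\widehat A)_{\mathbb Q}$, having left inverse $\tfrac1{d^2}\phi_{L\ast}$ by the projection formula and $\deg\phi_L=d^2$. The decisive observation is that $\ker\phi_L$ fixes the class of $L$: for $x\in\ker\phi_L$ one has $t_x^\ast L\otimes L^{-1}\cong\mathcal P_{\phi_L(x)}=\mathcal O_A$, so $t_x^\ast L\cong L$ in $\operatorname{Pic}(A)=\operatorname{Ch}^1(A)$ and therefore $t_x^\ast(L^{\cdot k})=L^{\cdot k}$. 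Since $\phi_L$ is an étale $\ker\phi_L$-torsor, $\phi_L^\ast\phi_{L\ast}(\alpha)=\sum_{x\in\ker\phi_L}t_x^\ast\alpha$; applied to $\alpha=L^{\cdot k}$ every summand equals $L^{\cdot k}$, so $\phi_L^\ast\phi_{L\ast}(L^{\cdot k})=d^2L^{\cdot k}$. Because $\phi_L^\ast$ is a ring map, $\phi_L^\ast\bigl((\phi_{L\ast}L)^{\cdot k}\bigr)=(\phi_L^\ast\phi_{L\ast}L)^{\cdot k}=(d^2L)^{\cdot k}=d^{2k}L^{\cdot k}=d^{2k-2}\,\phi_L^\ast\phi_{L\ast}(L^{\cdot k})$, and injectivity of $\phi_L^\ast$ yields the identity. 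With it in hand the two displayed transforms coincide, and injectivity of $F$ from (a) gives the claimed formula. The only delicate bookkeeping is keeping the powers of $d$ and the signs $(-1)^{g-p}$ aligned across the two product structures; conceptually, the real point is that the $\ker\phi_L$-invariance of $L$ forces the projection-formula identity to hold already at the level of Chow groups, and not merely in cohomology where translations act trivially for free.
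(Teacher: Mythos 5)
Your proof is correct, but the paper itself contains no proof of this statement: it is quoted as classical background, with the proof deferred to \cite[Corollaire~2]{Beauv2} and \cite[\S 16.5]{CAV}. What you have written is essentially the classical Fourier--Mukai derivation, and every step checks out: the transform of both sides via the first Beauville theorem and the exchange property $F(x\star y)=F(x)\cdot F(y)$, the computation $F(c_L)=-\tfrac{1}{d^2}\phi_{L\ast}(L)$ (the case $p=g-1$, where the symmetry hypothesis enters), the reduction to the key identity $(\phi_{L\ast}L)^{\cdot k}=d^{\,2k-2}\phi_{L\ast}(L^{\cdot k})$, and its verification by pulling back along the degree-$d^2$ isogeny $\phi_L$, using $\phi_L^\ast\phi_{L\ast}=\sum_{x\in K(L)}t_x^\ast$, the $K(L)$-invariance $t_x^\ast L\cong L$, and injectivity of $\phi_L^\ast$ on rational Chow groups; finally injectivity of $F$ from the inversion formula closes the argument. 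The natural comparison inside the paper is with its proof of the tropical analogue, Proposition~\ref{prop:tropgenpoincare}, and there the mechanism is genuinely different: instead of applying the injective transform $F$ to both sides, the paper applies the pushforward $\phi_{\sL\ast}$ to both sides, verifies the resulting identity by explicit basis computations (choosing bases so that $c_1(\sL)=\sum_i a_i\lambda_i^{\ast}\otimes\eta_i^{\ast}$, with closed formulas for $\phi_{\sL\ast}$, $F_{E_X}$, and the Pontryagin product), and concludes by bijectivity of $\phi_{\sL\ast}$ on $H^{p,p}$. The isogeny-torsor identity at the heart of your argument has no counterpart in the paper's tropical proof, and cannot have one in the same form: as you note at the end, translations act trivially on (co)homology classes but not on Chow groups, so your argument supplies exactly the Chow-level input that makes the classical statement stronger than its cohomological shadow, while in the tropical setting the paper's explicit linear-algebraic computation (elementary divisors, Cauchy--Binet, Jacobi) plays the corresponding role.
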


 \subsection{Our contribution}
  Our first goal is to define a theory of Fourier--Mukai transform for tropical abelian varieties at the level of tropical cohomology. We do this, more generally, for real tori with integral structures. In particular, we define and construct the tropical Poincar\'e bundle $\mathcal{P}_X$ on the product of a real torus with integral structure $X$ and its dual $\widehat{X}$, using the tropical Appell--Humbert theorem (see Theorem~\ref{Poincare:existence}). 
  Let $p_1:X\times\widehat{X}\rightarrow X$ and $p_2:X\times\widehat{X}\rightarrow \widehat{X}$ be the projection maps. We can then formally define a cohomological tropical Fourier--Mukai transform (see Definitions~\ref{Def:cohomtropFMker} and \ref{def:FMtrans}) as follows:
 \begin{equation}
    F=F_{E_X}: H^{\bullet, \bullet}(X)_{\mathbb{Q}}\rightarrow H^{\bullet, \bullet}(\widehat{X})_{\mathbb{Q}}\, ,  \ \ F_{E_X}(\alpha)=p_{2\ast}(e^{c_1(\mathcal{P}_X)}\cdot p_1^{\ast}\alpha) \, .
\end{equation}
We also show that the above formal definition is closely related to the tropical Poincar\'e duality map (see Proposition~\ref{prop: fourierdesc}). Consequently, we obtain our first main result.

\begin{theorem}[=Corollary~\ref{cor:FMpq}]\label{ThmMain1}
$F_{E_X}|_{H^{p,q}(X)}: H^{p,q}(X)\xrightarrow{\sim} H^{g-q,g-p}(\widehat{X})$ is an isomorphism of abelian groups.
\end{theorem}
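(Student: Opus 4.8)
The plan is to reduce the statement to the description of $F_{E_X}$ in terms of tropical Poincar\'e duality recorded in Proposition~\ref{prop: fourierdesc}, using the K\"unneth decomposition of $H^{\bullet,\bullet}(X\times\widehat{X})$ together with the bidegree structure of $c_1(\mathcal{P}_X)$. First I would record the tropical K\"unneth isomorphism $H^{\bullet,\bullet}(X\times\widehat{X})\cong H^{\bullet,\bullet}(X)\otimes H^{\bullet,\bullet}(\widehat{X})$, under which $p_1^*$ and $p_2^*$ are the two inclusions and $p_{2\ast}$ is integration over the fiber $X$; that is, $p_{2\ast}$ annihilates every K\"unneth summand except those containing the fundamental class in $H^{g,g}(X)$, on which it is the canonical isomorphism $H^{g,g}(X)\otimes H^{a,b}(\widehat{X})\xrightarrow{\sim}H^{a,b}(\widehat{X})$.

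The second step is a bidegree count. Because $\mathcal{P}_X$ restricts trivially to $X\times\{0\}$ and to $\{0\}\times\widehat{X}$, its class $c_1(\mathcal{P}_X)\in H^{1,1}(X\times\widehat{X})$ has no pure $X$- or pure $\widehat{X}$-component, so it lies in the ``cross'' part $\big(H^{1,0}(X)\otimes H^{0,1}(\widehat{X})\big)\oplus\big(H^{0,1}(X)\otimes H^{1,0}(\widehat{X})\big)$. Expanding $e^{c_1(\mathcal{P}_X)}$ and cupping with $p_1^*\alpha$ for $\alpha\in H^{p,q}(X)$, the only monomials that acquire a full $H^{g,g}(X)$ factor — and hence survive $p_{2\ast}$ — come from the single term $c_1(\mathcal{P}_X)^{\,2g-p-q}/(2g-p-q)!$, with exactly $g-p$ factors of type $H^{1,0}(X)\otimes H^{0,1}(\widehat{X})$ and $g-q$ factors of type $H^{0,1}(X)\otimes H^{1,0}(\widehat{X})$. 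This simultaneously explains why $F_{E_X}$ sends $H^{p,q}(X)$ into $H^{g-q,g-p}(\widehat{X})$ and why a single graded piece of the exponential governs the restriction to each bidegree, matching the operator of Proposition~\ref{prop: fourierdesc}; the integrality asserted in the statement then follows from that explicit (fraction-free) description.

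Finally I would assemble the isomorphism. By Proposition~\ref{prop: fourierdesc} the map $F_{E_X}|_{H^{p,q}(X)}$ factors as tropical Poincar\'e duality $H^{p,q}(X)\xrightarrow{\sim}H^{g-p,g-q}(X)^*$ — an isomorphism since a compact real torus with integral structure is a tropical manifold satisfying Poincar\'e duality — followed by the tautological identification $H^{g-p,g-q}(X)^*\xrightarrow{\sim}H^{g-q,g-p}(\widehat{X})$ coming from the construction of the dual torus $\widehat{X}$ in Theorem~\ref{Poincare:existence}, which swaps the two index slots. Both maps are isomorphisms of abelian groups, so their composite is as well. The step I expect to be the real work is the one subsumed in Proposition~\ref{prop: fourierdesc}: pinning down $c_1(\mathcal{P}_X)$ as precisely the canonical pairing class in $\big(H^{1,0}(X)\otimes H^{0,1}(\widehat{X})\big)\oplus\big(H^{0,1}(X)\otimes H^{1,0}(\widehat{X})\big)$ under the tropical Appell--Humbert description, and checking its compatibility with $p_{2\ast}$; once that explicit form is in hand, the isomorphism claim is formal.
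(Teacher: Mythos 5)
Your proposal is correct and is essentially the paper's own argument: the paper deduces Corollary~\ref{cor:FMpq} directly from Proposition~\ref{prop: fourierdesc}, which expresses $F_{E_X}|_{H^{p,q}(X)}$ (up to an explicit sign) as the composite $\alpha_{p,q}$ of the Poincar\'e duality isomorphism with the isomorphism induced by $c_1(\mathcal{P}_X)$ (multiplication by $-1$ on $\Lambda$ and $N$, by Lemma~\ref{lem:map}), which is precisely your factorization, and your K\"unneth/bidegree bookkeeping matches the computation in the paper's proof of that proposition. The only cosmetic deviation is that you place $c_1(\mathcal{P}_X)$ in the cross K\"unneth summands via the triviality of the restrictions of $\mathcal{P}_X$ to $X\times\{0\}$ and $\{0\}\times\widehat{X}$, whereas the paper's Lemma~\ref{lem:poincarecohom} reads this off the explicit Appell--Humbert form of the Poincar\'e bundle; both give the same conclusion.
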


Our second goal is to apply the tropical Fourier--Mukai theory to obtain tropical analogues of the aforementioned classical results of Beauville.

\begin{theorem}[=Theorem~\ref{mainthm}]\label{ThmMain2}
    Let $\sL$ be a nondegenerate line bundle on a real torus  with integral structure $X=N_\bR/\Lambda$ of dimension $g$. Let the first Chern class $c_1(\sL)$ of $\sL$ be $E\in H^{1,1}(X)=(\Lambda\otimes N)^{\ast}$. Let $F_{E_X}:H^{\bullet, \bullet}(X)\rightarrow H^{\bullet, \bullet}(\widehat{X})$ be the Fourier--Mukai transform on tropical cohomology. Then the following equality holds in $H^{g-p, g-p}(\widehat{X})$, for any $0\leq p\leq g$:
    \[ F_{E_X}\left(\frac{c_1(\sL^{\cdot p})}{p!}\right)=\frac{(-1)^{g-p}}{\det E}\phi_{\sL\ast}\left(\frac{c_1(\sL^{\cdot g-p})}{(g-p)!}\right) \, .\]
\end{theorem}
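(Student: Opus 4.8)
The plan is to deduce the graded identity from the single ``closed-form'' statement
\[
 F_{E_X}\bigl(e^{c_1(\sL)}\bigr) = \frac{1}{\det E}\,\phi_{\sL\ast}\bigl(e^{-c_1(\sL)}\bigr) \in H^{\bullet,\bullet}(\widehat{X})_{\mathbb{Q}} .
\]
Extracting bidegrees from this is immediate: by Theorem~\ref{ThmMain1} the operator $F_{E_X}$ carries $H^{p,p}(X)$ isomorphically onto $H^{g-p,g-p}(\widehat{X})$, so $F_{E_X}(c_1(\sL^{\cdot p})/p!)$ is homogeneous of bidegree $(g-p,g-p)$; since $\phi_{\sL\ast}$ preserves bidegree, the $(g-p,g-p)$-component of the right-hand side is exactly $\tfrac{(-1)^{g-p}}{\det E}\,\phi_{\sL\ast}(c_1(\sL^{\cdot g-p})/(g-p)!)$, the sign being the coefficient of the $(g-p)$-th term of $e^{-c_1(\sL)}$. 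Thus it suffices to prove the displayed exponential identity.

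The geometric engine is the tropical analogue of Mumford's formula. Writing $m\colon X\times X\to X$ for the group law and $\mathrm{pr}_1,\mathrm{pr}_2$ for the projections of $X\times X$, I would first establish
\[
 (1\times\phi_{\sL})^{\ast}\,\cP_X \;\cong\; m^{\ast}\sL\otimes\mathrm{pr}_1^{\ast}\sL^{-1}\otimes\mathrm{pr}_2^{\ast}\sL^{-1},
\]
using the tropical Appell--Humbert description of $\cP_X$ from Theorem~\ref{Poincare:existence} together with the defining property of the polarization map (that $\phi_{\sL}$ sends $x$ to the class of $t_x^{\ast}\sL\otimes\sL^{-1}$). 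Passing to first Chern classes in $H^{1,1}(X\times X)$ then yields the key relation
\[
 (1\times\phi_{\sL})^{\ast}c_1(\cP_X) = m^{\ast}E - \mathrm{pr}_1^{\ast}E - \mathrm{pr}_2^{\ast}E .
\]

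Next I would compute $\phi_{\sL}^{\ast}F_{E_X}(e^{c_1(\sL)})$ by proper base change along the cartesian square with top arrow $1\times\phi_{\sL}\colon X\times X\to X\times\widehat{X}$, left arrow $\mathrm{pr}_2$, and right arrow $p_2$, which replaces $\phi_{\sL}^{\ast}p_{2\ast}$ by $\mathrm{pr}_{2\ast}(1\times\phi_{\sL})^{\ast}$. Substituting the key relation (and using $p_1\circ(1\times\phi_{\sL})=\mathrm{pr}_1$), the $\mathrm{pr}_1^{\ast}E$ terms cancel and the computation collapses to $\mathrm{pr}_{2\ast}(e^{\,m^{\ast}E-\mathrm{pr}_2^{\ast}E})$. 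Applying the shear automorphism $(x,y)\mapsto(x+y,y)$ of $X\times X$ (which fixes $\mathrm{pr}_2$ and pulls $\mathrm{pr}_1^{\ast}E$ back to $m^{\ast}E$) and then the projection formula gives
\[
 \phi_{\sL}^{\ast}F_{E_X}\bigl(e^{c_1(\sL)}\bigr) = \Bigl(\int_X e^{c_1(\sL)}\Bigr)\, e^{-c_1(\sL)} .
\]
Applying $\phi_{\sL\ast}$ and using $\phi_{\sL\ast}\phi_{\sL}^{\ast}=\deg\phi_{\sL}$ then produces $F_{E_X}(e^{c_1(\sL)}) = \tfrac{1}{\deg\phi_{\sL}}\bigl(\int_X e^{c_1(\sL)}\bigr)\,\phi_{\sL\ast}(e^{-c_1(\sL)})$.

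The step I expect to be the main obstacle is pinning down the scalar, i.e.\ showing $\tfrac{1}{\deg\phi_{\sL}}\int_X e^{c_1(\sL)}=\tfrac{1}{\det E}$. This is an internal normalization computation in tropical (co)homology: one must evaluate the self-intersection number $\int_X c_1(\sL)^{\cdot g}/g!$ and the isogeny degree $\deg\phi_{\sL}$ against the bilinear form $E\in(\Lambda\otimes N)^{\ast}$, using the explicit tropical integration map and the period-lattice description of $\widehat{X}$, and check that the normalizations are those forced by the isomorphism of Theorem~\ref{ThmMain1}. Here the tropical story genuinely departs from the classical one: classically the analogous scalar is $\chi(L)/\chi(L)^2=1/\chi(L)$ because $c_1(L)$ is an \emph{alternating} $2g\times 2g$ form whose self-intersection is its Pfaffian, whereas tropically $E$ is an honest $g\times g$ pairing, so the relevant invariant is the determinant $\det E$ itself. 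The other technical hurdle is upgrading the key relation to an actual isomorphism of tropical line bundles (equivalently, verifying it at the cochain level before passing to $c_1$), which is what makes the Appell--Humbert input rigorous.
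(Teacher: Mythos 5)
Your strategy is the classical Mukai--Beauville argument transported to the tropical setting, and it is genuinely different from the paper's proof: the paper never leaves $X\times\widehat{X}$, but instead expands both sides in explicit bases via the Leibniz formula for minors and matches them using Jacobi's determinant identity and the Cauchy--Binet formula. Much of your outline is sound. The reduction to the exponential identity is fine. The relation $(1\times\phi_{\sL})^{\ast}c_1(\cP_X)=m^{\ast}E-\mathrm{pr}_1^{\ast}E-\mathrm{pr}_2^{\ast}E$ is correct, and only the Chern-class level is needed for a cohomological transform (both sides are the symmetric form $((x_1,y_1),(x_2,y_2))\mapsto E(x_1,y_2)+E(x_2,y_1)$, by the Appell--Humbert data of $\cP_X$ and Lemma~\ref{lem:analyticrep}); the line-bundle-level isomorphism you worry about is not actually required. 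Base change for your cartesian square, the shear trick, and the projection formula all hold here and are easy to verify on K\"unneth components, so your intermediate identity $\phi_{\sL}^{\ast}F_{E_X}(e^{c_1(\sL)})=\bigl(\int_X e^{c_1(\sL)}\bigr)e^{-c_1(\sL)}$ is correct.

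The gap is the final step: the transfer formula $\phi_{\sL\ast}\phi_{\sL}^{\ast}=\deg\phi_{\sL}\cdot\mathrm{id}$ is \emph{false} in tropical cohomology. The pushforward $\phi_{\sL\ast}$ is Poincar\'e duality conjugated with the homology pushforward, and on $H_{\bullet,\bullet}(X)=\bigwedge^{\bullet}\Lambda\otimes\bigwedge^{\bullet}N$ the latter acts by wedge powers of $\phi_E=-E$ on \emph{both} tensor factors: the $\Lambda$-factor records the number of sheets of the covering, while the $N$-factor records the lattice index of the differential --- a genuinely tropical multiplicity. Consequently $\phi_{\sL\ast}(1)=(\det E)^2$, not $\deg\phi_{\sL}=|\det E|$ (this is visible in the paper's own formula \eqref{eqnwedge5}, which gives $\phi_{\sL\ast}(1)=a_{[g]}^2=(\det E)^2$), and by the projection formula $\phi_{\sL\ast}\phi_{\sL}^{\ast}=(\det E)^2\cdot\mathrm{id}$. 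Your argument therefore outputs the scalar $\int_X e^{c_1(\sL)}/\deg\phi_{\sL}=\det E/|\det E|=\pm 1$ instead of $1/\det E$; equivalently, the normalization identity you propose to prove, $\tfrac{1}{\deg\phi_{\sL}}\int_X e^{c_1(\sL)}=\tfrac{1}{\det E}$, is false, since $\int_X e^{c_1(\sL)}=\det E$ (the paper's geometric Riemann--Roch) while $\deg\phi_{\sL}=|\det E|$. A concrete check: take $g=1$, $X=\bR/\bZ$, $E=d\geq 2$, and let $u$ denote the generator of $H^{1,1}(\widehat{X})$. Then Proposition~\ref{prop: fourierdesc} and the paper's pushforward formulas give $F_{E_X}(e^{E})=d-u$ and $\phi_{\sL\ast}(e^{-E})=d^2-d\,u$, so indeed $F_{E_X}(e^{E})=\tfrac{1}{d}\phi_{\sL\ast}(e^{-E})$ as the theorem asserts, whereas your derivation yields $F_{E_X}(e^{E})=\phi_{\sL\ast}(e^{-E})$, off by the factor $d$. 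The repair is to prove and use $\phi_{\sL\ast}\phi_{\sL}^{\ast}=(\det E)^2\cdot\mathrm{id}$ in place of the classical transfer formula; then the scalar becomes $\det E/(\det E)^2=1/\det E$ and your argument closes. In other words, the place where the tropical theory departs from the classical one is the degree/transfer formula itself (tropically the cohomological ``degree'' of $\phi_{\sL}$ is $(\det E)^2$, not the set-theoretic covering degree), rather than the normalization of $\int_X e^{c_1(\sL)}$ where you located it.
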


We prove Theorem~\ref{ThmMain2} by giving explicit formulas for both sides of the identity. It is intriguing that our computations of the above intersection theoretic identity  involve concrete and classical linear algebraic identities such as a generalized Cauchy--Binet formula and Jacobi's identity for determinants.

In the case that $\sL$ is ample, we verify that $\det E$ in Theorem~\ref{ThmMain2} has a cohomological as well as geometric interpretation. For this, we use a tropical analogue of the analytic Riemann--Roch for abelian varieties (see Lemma~\ref{rrlemma} and Corollary~\ref{lem:tropanRR}). 
Consequently, we obtain the following generalized tropical Poincar\'e formula.

\begin{theorem}[=Proposition~\ref{prop:tropgenpoincare}]\label{ThmMain3}
    Let $[D]\in \operatorname{CaCl}(X)$ be an ample tropical Cartier divisor class on $X=N_\bR/\Lambda$ with $d=h^0(X,\sL(D))$. Let $c_{[D]}=[D]^{\cdot g-1}/(d(g-1)!)$, where $[D]^{\cdot g-1}$ is the $(g-1)$-fold tropical intersection product of $D$. Then, for $0\leq p\leq g$, we have the equality:
\begin{equation}
\frac{[D]^{\cdot p}}{p!}=d\frac{c_{[D]}^{\star \ g-p}}{(g-p)!}\, .\end{equation}
in $H_{p,p}(X)$. Here $\star$ denotes the Pontryagin product on tropical homology.
\end{theorem}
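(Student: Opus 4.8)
The plan is to derive the formula as a formal consequence of the Fourier--Mukai computation in Theorem~\ref{ThmMain2}, in exact parallel with Beauville's deduction of his generalized Poincar\'e formula from his transform computation. The mechanism is that $F_{E_X}$ interchanges the two product structures at issue: the Pontryagin product $\star$ on tropical homology is sent to the intersection product on the dual side. Thus an identity written with $\star$-powers becomes, after transform, an identity between ordinary intersection powers, which one can attack directly. Throughout, the injectivity of $F_{E_X}$ on each bigraded piece---Theorem~\ref{ThmMain1} (equivalently Corollary~\ref{cor:FMpq})---lets us replace the desired equality by the equality of its images under $F_{E_X}$.

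Before computing, I would assemble three structural inputs. First, tropical Poincar\'e duality identifies $H_{p,p}(X)$ with $H^{g-p,g-p}(X)$ and converts the Pontryagin product into its cohomological counterpart; combined with the description of $F_{E_X}$ through the duality pairing in Proposition~\ref{prop: fourierdesc}, this yields the exchange law $F_{E_X}(\alpha\star\beta)=F_{E_X}(\alpha)\cdot F_{E_X}(\beta)$. Second, I would use the projection formula for the polarization isogeny $\phi_{\sL}$, together with the fact that translations act trivially on the cohomology of a torus (so that $\phi_{\sL}^{\ast}\phi_{\sL\ast}[D]=(\deg\phi_{\sL})[D]$), to obtain the key identity $(\phi_{\sL\ast}[D])^{\cdot k}=(\deg\phi_{\sL})^{\,k-1}\,\phi_{\sL\ast}([D]^{\cdot k})$. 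Third, I would invoke the tropical analytic Riemann--Roch of Corollary~\ref{lem:tropanRR} to identify the scalar $\det E$ of Theorem~\ref{ThmMain2} with $d=h^{0}(X,\sL(D))$ and to pin down $\deg\phi_{\sL}$; this is what converts the abstract factor of Theorem~\ref{ThmMain2} into the geometric quantity $d$ appearing in the statement.

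The computation then runs in three moves. Specializing Theorem~\ref{ThmMain2} to $p=g-1$ expresses $F_{E_X}(c_{[D]})$ as an explicit scalar multiple of $\phi_{\sL\ast}[D]$. Next, applying $F_{E_X}$ to the right-hand side $d\,c_{[D]}^{\star g-p}/(g-p)!$ and using the exchange law turns the $(g-p)$-fold Pontryagin power of $c_{[D]}$ into the $(g-p)$-fold intersection power of $F_{E_X}(c_{[D]})$, i.e.\ a scalar multiple of $(\phi_{\sL\ast}[D])^{\cdot g-p}$; the projection-formula identity then rewrites this as a scalar multiple of $\phi_{\sL\ast}([D]^{\cdot g-p})$. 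Finally, applying $F_{E_X}$ to the left-hand side $[D]^{\cdot p}/p!$ and invoking Theorem~\ref{ThmMain2} once more yields the class $\tfrac{(-1)^{g-p}}{\det E}\,\phi_{\sL\ast}([D]^{\cdot g-p})/(g-p)!$. Comparing the two expressions and cancelling by injectivity of $F_{E_X}$ gives the claim.

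The main obstacle is the bookkeeping of the scalars and signs, which is exactly where the tropical and classical theories diverge. In Beauville's setting the relevant constants obey $h^{0}=\sqrt{\det E}$ (the Pfaffian) and $\deg\phi_{L}=\det E$, whereas the tropical Riemann--Roch of Corollary~\ref{lem:tropanRR} produces $h^{0}=\det E$ with no square root. The accumulated powers of $d$, $\det E$, and $\deg\phi_{\sL}$ in the three moves above collapse to the stated formula only after the correct tropical relation between these three quantities is established, so verifying that relation---and checking that the exchange law and the projection-formula identity carry no hidden numerical factor---is the delicate point. A secondary, and genuinely necessary, check is that tropical Poincar\'e duality is compatible with the Pontryagin product, so that the homological identity in $H_{p,p}(X)$ indeed corresponds under duality to the cohomological identity to which $F_{E_X}$ is applied.
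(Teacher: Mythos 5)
Your high-level route --- pass to cohomology by Poincar\'e duality, apply $F_{E_X}$ to both sides, convert Pontryagin powers into cup powers, and cancel by injectivity of $F_{E_X}$ --- is workable, and two of your three structural inputs are sound: the exchange law $F_{E_X}(\alpha\star\beta)=\pm F_{E_X}(\alpha)\smile F_{E_X}(\beta)$ does follow from Proposition~\ref{prop: fourierdesc} (up to sign, $F_{E_X}$ is Poincar\'e duality followed by the canonical identification $H_{g-p,g-q}(X)\cong H^{g-q,g-p}(\widehat{X})$, under which both products become wedge products, so no factors of $d$ can enter, only signs needing care); and the relation $d=h^0(X,\sL)=\det E=\deg\phi_{\sL}$ is exactly Lemma~\ref{rrlemma} and Corollary~\ref{lem:tropanRR}. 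The genuine gap is your remaining input: the transfer identity $\phi_{\sL}^{\ast}\phi_{\sL\ast}=(\deg\phi_{\sL})\,\mathrm{id}$, and hence $(\phi_{\sL\ast}[D])^{\cdot k}=(\deg\phi_{\sL})^{k-1}\phi_{\sL\ast}([D]^{\cdot k})$, is \emph{false} in tropical cohomology. The deck-transformation argument you invoke is valid for the topological pushforward, but $\phi_{\sL\ast}$ here is the tropical Gysin map: Poincar\'e duality composed with the homological pushforward $\bigwedge^{\bullet}(\Lambda\to N^{\ast})\otimes\bigwedge^{\bullet}(N\to \Lambda^{\ast})$, which acts on \emph{both} lattice factors. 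Concretely, $\phi_{\sL\ast}\operatorname{cyc}[X]=(\det E)^{2}\operatorname{cyc}[\widehat{X}]$, since each of the two $g$-th exterior powers contributes a factor $\pm\det E$; at the level of cycles, each of the $d$ points in a fiber of $\phi_{\sL}$ carries the lattice-index multiplicity $[\Lambda^{\ast}:\Phi_E(N)]=\det E=d$ in the tropical pushforward. A direct check in a basis adapted to $\rho_E$, where $c_1(\sL)=\sum_i a_i\,\lambda_i^{\ast}\otimes\eta_i^{\ast}$, gives $\phi_{\sL}^{\ast}\phi_{\sL\ast}=d^{2}\,\mathrm{id}$ on every $H^{p,p}(X)$, not $d\,\mathrm{id}$.

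This is not a bookkeeping quibble; with your constant the two sides you compare do not agree. Your three moves yield $F_{E_X}$ of the left side as $\frac{(-1)^{g-p}}{d}\,\phi_{\sL\ast}\bigl(c_1(\sL)^{\wedge (g-p)}\bigr)/(g-p)!$, but $F_{E_X}$ of the right side as $\frac{(-1)^{g-p}}{d^{\,g-p}}\,\phi_{\sL\ast}\bigl(c_1(\sL)^{\wedge (g-p)}\bigr)/(g-p)!$ --- a mismatch by $d^{\,g-p-1}$, which would wrongly suggest the theorem itself has different constants. With the corrected transfer $(\phi_{\sL\ast}[D])^{\cdot k}=d^{\,2(k-1)}\phi_{\sL\ast}([D]^{\cdot k})$ the scalars collapse exactly, so your plan is salvageable: this is the second place (besides $h^0=\det E$ rather than $\sqrt{\det E}$) where the tropical constants deviate from Beauville's, and the two deviations precisely compensate. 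For comparison, the paper never needs any transfer formula: it applies $\phi_{\sL\ast}$ (rather than $F_{E_X}$) to both sides, uses that pushforward along the group homomorphism $\phi_{\sL}$ commutes with Pontryagin powers (immediate from \eqref{eqnwedge5}, or from functoriality of $\star$), evaluates $F_{E_X}(c_1(\sL))$ and its Pontryagin powers explicitly in the adapted basis, invokes Corollary~\ref{maincor} twice, and cancels by injectivity of $\phi_{\sL\ast}$ on $H^{p,p}(X)$.
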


As an application of Theorem~\ref{ThmMain3}, we obtain a tropical analogue of {\em geometric Riemann--Roch} for tropical abelian varieties (see Remark~\ref{rem:gRR} for a comparison with \cite[Theorem~47]{KS}).

\begin{theorem}[=Corollary~\ref{cor:gRR}]\label{ThmMain4}
    Let $\mathscr{L}\coloneqq \mathscr{L}(D)$ be an ample line bundle on a tropical abelian variety $X$ of dimension $g$ with $d=h^0(X, \mathscr{L})$. Then:
    \[d=\frac{(D^{\cdot g})}{g!}\coloneqq\int_X\frac{c_1(\mathscr{L})^{\wedge g}}{g!}\, ,\]
    where $(D^g)\coloneqq \int_X c_1(\mathscr{L})^{\wedge g}$ is the $g$-fold intersection number of $D$, given by integration of tropical Dolbeault superforms.
\end{theorem}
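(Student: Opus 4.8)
The plan is to obtain this geometric Riemann--Roch statement as the extreme case $p=g$ of the generalized tropical Poincaré formula of Theorem~\ref{ThmMain3}, and then to pass from the resulting identity of cycle classes to the numerical equality by applying the degree map on zero-cycles. Since the hypotheses here ($\sL=\sL(D)$ ample with $d=h^0(X,\sL)$) are exactly those of Theorem~\ref{ThmMain3}, no additional setup is needed before specializing.

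First I would set $p=g$ in Theorem~\ref{ThmMain3}. The right-hand side collapses to $d\,c_{[D]}^{\star 0}/0! = d\,c_{[D]}^{\star 0}$, where $c_{[D]}^{\star 0}$ is the empty Pontryagin product. Because the Pontryagin product on tropical homology is induced by the group law $m\colon X\times X\to X$, its unit is the class $[0]$ of the identity element $0\in X$, a zero-dimensional cycle class. The left-hand side is the zero-cycle class $[D]^{\cdot g}/g!$. Hence Theorem~\ref{ThmMain3} specializes to the identity
\[
\frac{[D]^{\cdot g}}{g!}=d\,[0]
\]
in the group of zero-cycle classes on $X$. Applying the degree map $\deg$, which is an isomorphism of this group onto $\mathbb{Z}$ (equivalently $\mathbb{Q}$) carrying $[0]$ to $1$, yields $\deg\!\big([D]^{\cdot g}\big)/g! = d$.

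It then remains to identify $\deg\!\big([D]^{\cdot g}\big)$ with $\int_X c_1(\sL)^{\wedge g}$. This is the compatibility of tropical Poincaré duality with products and integration: the $g$-fold homological self-intersection $[D]^{\cdot g}$ is Poincaré dual to $c_1(\sL)^{\wedge g}$, and the degree of a zero-cycle class equals the integral of its Poincaré-dual top Dolbeault superform. Granting this, $\deg\!\big([D]^{\cdot g}\big)=\int_X c_1(\sL)^{\wedge g}=(D^{\cdot g})$, and combining with the previous step gives
\[
d=\frac{(D^{\cdot g})}{g!}=\frac{1}{g!}\int_X c_1(\sL)^{\wedge g}\, ,
\]
as claimed. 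I expect the main (and essentially the only nontrivial) point to be this last compatibility between the homological intersection product $[D]^{\cdot g}$ and the integration of Dolbeault superforms; once the Poincaré duality formalism and the definition of tropical integration from earlier in the paper are invoked, everything else is the bookkeeping above.
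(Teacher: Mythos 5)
Your proposal is correct and follows essentially the same route as the paper: specialize Theorem~\ref{ThmMain3} at $p=g$ to get $[D]^{\cdot g}/g! = d\cdot[1]$ in $H_{0,0}(X)$, identify $\int_X c_1(\mathscr{L})^{\wedge g}$ with the class $[D]^{\cdot g}$ via Poincar\'e duality (the paper does this through \cite[Theorem~4.33]{JSS} and \cite[Theorem~3.3]{GSJ}, exactly the compatibility you flag as the one nontrivial point), and finish with the degree map $H_{0,0}(X)\xrightarrow{\sim}\mathbb{Z}$ sending the unit class to $1$.
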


For tropical Jacobians, one recovers tropical Poincar\'e formula in \cite[Theorem A]{GSJ} about the image $\widetilde{W}_d$ of the tropical Abel--Jacobi maps $\Gamma^d\rightarrow\Jac(\Gamma^d)$ from Theorem~\ref{ThmMain3} (see \S\ref{subsubsectropJac}). 

One can also apply Theorem~\ref{ThmMain3} to the setting of (continuous) tropical Prym variety $\operatorname{Prym}_c(\widetilde{\Gamma}/\Gamma)$ defined by R\"ohrle and Zakharov in \cite{RZ}. 
Let $\zeta_c$ be the canonical principal polarization for $\operatorname{Prym}_c(\widetilde{\Gamma}/\Gamma)$ defined in \cite[\S4.4]{RZ}. We obtain the following tropical Poincar\'e--Prym formula. This resolves a conjecture by R\"ohrle and Zakharov (\cite[Conjecture~4.24]{RZ}). 

\begin{theorem}[=Corollary~\ref{conPP}]\label{ThmMain5}
    Let $\widetilde{\Gamma} \rightarrow \Gamma$ be a double cover of tropical curves, let $q \in \widetilde{\Gamma}$ be a basepoint and let $g_0=\operatorname{dim} \operatorname{Prym}_c(\widetilde{\Gamma} / \Gamma)\coloneqq g(\widetilde{\Gamma})-g(\Gamma)$. Then, for all $1\leq d\leq g_0$, the following equality holds in $H_{d, d}(\operatorname{Prym}(\widetilde{\Gamma} / \Gamma))$:
$$
\frac{\Psi^d_{q,*}\operatorname{cyc}[\widetilde{\Gamma^d}]}{d!}=\frac{2^{d}}{\left(g_0-d\right) !}[\zeta_c]^{g_0-d} \, ,
$$
 where $[\zeta_c]\in H_{g_0-1,g_0-1}(\operatorname{Prym}(\widetilde{\Gamma}/\Gamma))$ is the class of the principal polarization of $\operatorname{Prym}_c(\widetilde{\Gamma} / \Gamma)$ and $\Psi^d_{q,*}:\widetilde{\Gamma^d}\rightarrow \operatorname{Prym}_c(\widetilde{\Gamma}/\Gamma)$ is the $d$-fold tropical Abel--Prym map with respect to basepoint $q\in\widetilde{\Gamma}$.
\end{theorem}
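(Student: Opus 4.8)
The plan is to deduce Theorem~\ref{ThmMain5} from the generalized tropical Poincar\'e formula (Theorem~\ref{ThmMain3}) applied to the tropical abelian variety $\operatorname{Prym}_c(\widetilde{\Gamma}/\Gamma)$ itself. By \cite{RZ}, this Prym is a tropical abelian variety of dimension $g_0$ and $\zeta_c$ is a \emph{principal} polarization, so in the notation of Theorem~\ref{ThmMain3} we are in the case $h^0=1$ with $g=g_0$ and ample class $[\zeta_c]$. The whole argument then reduces to two ingredients: a purely formal identification of the Abel--Prym cycles as Pontryagin powers, and one genuinely Prym-theoretic input identifying the curve class. I would organize the proof around isolating that single geometric input.

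First I would record the formal step. Writing $\Psi^1_q\colon \widetilde{\Gamma}\to \operatorname{Prym}_c$ for the Abel--Prym map and $c_P \coloneqq \Psi^1_{q,*}\operatorname{cyc}[\widetilde{\Gamma}]\in H_{1,1}(\operatorname{Prym})$ for its image class, the $d$-fold Abel--Prym map factors as iterated group addition precomposed with $\Psi^1_q\times\cdots\times\Psi^1_q$; since pushforward along the sum map on a tropical abelian variety computes the Pontryagin product, this yields $\Psi^d_{q,*}\operatorname{cyc}[\widetilde{\Gamma^d}] = c_P^{\star d}$. Next I would apply Theorem~\ref{ThmMain3} to $X=\operatorname{Prym}_c$ with exponent $p=g_0-d$ and $h^0=1$; this gives, in $H_{d,d}(\operatorname{Prym})$, the identity $c_{[\zeta_c]}^{\star d}/d! = [\zeta_c]^{g_0-d}/(g_0-d)!$, where $c_{[\zeta_c]} = [\zeta_c]^{\cdot g_0-1}/(g_0-1)!$ is the minimal class. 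Dividing the Pontryagin identity by $d!$ and substituting, everything comes down to comparing $c_P$ with $c_{[\zeta_c]}$.

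The heart of the matter --- and the step I expect to be the main obstacle --- is the base case $d=1$: that the Abel--Prym curve class is twice the minimal class, i.e. $c_P = 2\, c_{[\zeta_c]}$ in $H_{1,1}(\operatorname{Prym})$. This is exactly where the factor $2^d$ originates, via $c_P^{\star d} = (2c_{[\zeta_c]})^{\star d} = 2^d c_{[\zeta_c]}^{\star d}$. To establish it tropically I would relate the Abel--Prym map to the Abel--Jacobi map of $\widetilde{\Gamma}$ through the projection $\Jac(\widetilde{\Gamma})\to\operatorname{Prym}_c$ induced by $\mathrm{id}-\sigma$, where $\sigma$ is the covering involution acting as $-1$ on the anti-invariant part; the factor $2$ then emerges either from $\mathrm{id}-\sigma$ restricting to multiplication by $2$ on $\operatorname{Prym}_c$, or equivalently from the relation $\Theta_{\widetilde{\Gamma}}|_{\operatorname{Prym}_c} = 2\,[\zeta_c]$ between the restriction of the principal polarization of $\Jac(\widetilde{\Gamma})$ and $\zeta_c$. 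Feeding this into the tropical Jacobian Poincar\'e formula (the Jacobian case of Theorem~\ref{ThmMain3}, recovering \cite{GSJ}) and tracking the induced polarizations pins down $c_P = 2c_{[\zeta_c]}$. Granting this, the chain $\tfrac{1}{d!}\Psi^d_{q,*}\operatorname{cyc}[\widetilde{\Gamma^d}] = \tfrac{1}{d!}c_P^{\star d} = 2^d\,\tfrac{c_{[\zeta_c]}^{\star d}}{d!} = \tfrac{2^d}{(g_0-d)!}[\zeta_c]^{g_0-d}$ gives the theorem. The remaining care is bookkeeping: matching R\"ohrle--Zakharov's normalization of $\zeta_c$ and $\Psi^d_q$ with the conventions of Theorem~\ref{ThmMain3} so that the principal-polarization value $h^0=1$ and the divided powers line up.
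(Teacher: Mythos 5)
Your proposal follows essentially the same route as the paper's proof of Corollary~\ref{conPP}: identify the $d$-fold Abel--Prym pushforward with the $d$-th Pontryagin power of the curve class $c_P=\Psi_{q,*}\operatorname{cyc}[\widetilde{\Gamma}]$ (the paper does this via the commutative diagram \eqref{Eqn:Commdiag2}, which is the same multilinearity argument you invoke), apply the generalized Poincar\'e formula (Theorem~\ref{propgenpoin}) to the principally polarized $\operatorname{Prym}_c(\widetilde{\Gamma}/\Gamma)$ with $h^0=1$, and conclude from $c_P=2c_{[\zeta_c]}$ by bilinearity of $\star$.

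The one substantive difference concerns the step you single out as ``the heart of the matter'' and expect to be the main obstacle, namely the base case $c_P=2\,c_{[\zeta_c]}$. This is not something you need to prove: it is precisely the $d=1$ case of the conjecture, which R\"ohrle and Zakharov themselves established as \cite[Theorem~4.25]{RZ}, and the paper simply cites it (see \eqref{Eqn:d=1PoinPrym}). Your sketched re-derivation gestures at the right ingredients --- the factorization of $\operatorname{Id}-\iota$ through $\operatorname{Prym}_c(\widetilde{\Gamma}/\Gamma)$ and the relation $\zeta=2\zeta_c$ between the induced and principal polarizations, both of which appear in \cite[Proposition~4.18]{RZ} --- but as written it is not a proof: extracting the curve class from these facts requires computing the pushforward of the minimal class of $\Jac(\widetilde{\Gamma})$ along the projection $\epsilon$, an explicit lattice-level computation that constitutes R\"ohrle--Zakharov's proof and does not follow formally from $\zeta=2\zeta_c$ alone. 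Replacing your sketch with that citation makes your argument coincide with the paper's; attempting to complete the sketch instead would amount to reproving \cite[Theorem~4.25]{RZ}.
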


A real torus with integral structure may be thought of as a canonical skeleton of a non-archimedean Berkovich analytic abelian variety (see \cite[\S6.5]{BerkovichBook}, \cite{GublerAV}). As such, one may consider our results as non-archimedean counterparts of existing classical results.  It is also natural to expect that our tropical identities control the limiting behavior in degenerating families of abelian varieties. This expectation is compatible with the setup in \cite[\S6]{Mum1}, \cite{AN} and \cite[Chapter 6, \S1]{FCAV}. We remark that, in this direction, an explicit formula for limit of the Fourier--Mukai transform, in the special case of rank one degenerations of abelian varieties, is obtained in \cite{vdGK}. 

\subsection{Structure of the paper}

In \S\ref{sec:GenPrelim} we review some general background about tropical spaces. 
In \S\ref{sec:RealtoriPrelim} we provide a brief account of the theory of real tori with integral structures and line bundles on them. We also record some consequences of the tropical Appell--Humbert theorem, which do not have an explicit account in the literature. For example, we prove a tropical theorem of the square and we give the canonical homomorphism from a real torus with integral structure to its dual associated to a line bundle. 
In \S\ref{sec:Poincarebundle}, we define and construct the Poincar\'e line bundle and, along the way, we prove a weak form of the seesaw principle for real tori with integral structures. 
In \S\ref{sec:tropFM} we define and describe the Fourier--Mukai transform on tropical cohomology, explicitly comparing it with the Poincar\'e duality isomorphism, using which we prove Theorem~\ref{ThmMain1}. 
In \S\ref{sec:GenPoincareformula} we study the Fourier--Mukai action on intersection powers of line bundles to prove Theorem~\ref{ThmMain2} and Theorem~\ref{ThmMain3}. We also briefly discuss a tropical analogue of analytic Riemann--Roch for abelian varieties. 
In \S\ref{sec:applications} we provide some applications of the results of the previous section. In particular, we prove Theorem~\ref{ThmMain4} and provide applications to tropical Jacobians and Prym varieties, ending with a proof of Theorem~\ref{ThmMain5}.

\subsection*{Acknowledgments}
We would like to thank Andreas Gross, Junaid Hasan, Klaus K\"unnemann and Caelan Ritter for helpful discussions. The work was partially supported by NSF CAREER DMS-2044564 grant.

\renewcommand*{\thethm}{\arabic{section}.\arabic{thm}}

\subsection*{Notations and conventions}
We list some of the notations and conventions that will be used throughout the paper.
\begin{itemize}
    \item[(i)] For a free abelian group $M$, we denote the real vector space $M\otimes_\mathbb{Z}\mathbb{R}$ by $M_\bR$. We denote its dual $\Hom(M, \mathbb{Z})$ by $M^\ast$. 
    \item[(ii)] For a basis $\{m_1,\dots, m_n\}$ of a free abelian group $M$, we will consider the dual basis $\{m_1^*,\dots, m_n^*\}$ for $M^\ast$  defined by the normalized evaluation pairing $m_i^*(m_i)=1$ and $m_i^*(m_j)=0$ for all $1\leq i\neq j\leq n$. 
    \item[(iii)] For any integer $n>0$, we denote the symmetric group on $n$ letters by $\mathfrak{S}_n$.
\end{itemize}

\section{Preliminaries on tropical spaces}\label{sec:GenPrelim}
The tropical spaces of interest in this paper are real tori with integral structures which, in particular, belong to the category of boundaryless rational polyhedral spaces. In this section we briefly review the preliminaries following \cite{GSH, GSJ}. We refer the reader to these papers for details and further references.

\subsection{Boundaryless rational polyhedral spaces}\label{subsec:BRPS}
A rational polyhedral set in $\mathbb{R}^n$ is a finite union of finite intersections of sets of the form $\{x \in \mathbb{R}^n | \langle m, x\rangle \leq a\}$,
where $m\in(\mathbb{Z}^n)^\ast$, $a\in\mathbb{R}$, and $\langle \cdot, \cdot \rangle$ denotes the evaluation pairing. Any such $P$ is equipped with a sheaf $\Aff_P$ of integral affine functions. A \textit{boundaryless rational polyhedral space} is a pair $(X, \Aff_X)$ of a topological space $X$ and sheaf $\Aff_X$ of continuous real-valued functions such that locally around each point $x\in X$, the pair is isomorphic to the pair $(P, \Aff_P)$ for a rational polyhedral set $P$ in some $\mathbb{R}^n$. The sections of $\Aff_X$ are called \textit{integral affine functions} on $X$. If $X$ is compact, we call it a \textit{closed rational polyhedral space}. 

 A morphism of boundaryless rational polyhedral spaces is a continuous map $f : X\rightarrow Y$ of topological spaces such that pullbacks of functions in $\Aff_Y$ are in $\Aff_X$. A morphism $f : X\rightarrow Y$ is called proper if it is a proper map of topological spaces, that is, preimages of compact sets are compact. 

\subsection{Tropical cycles, cycle classes, and (co)homology}\label{subsec:tropcycles}

We briefly recall the theory of tropical cycles and tropical (co)homology, as well as the tropical cycle class map which connects the two notions. For details, we refer the reader to \cite{GSJ}, \cite{AR10}, \cite{FR13}, \cite{Shaw13} regarding tropical cycles and \cite{IKMZ}, \cite{MZ14}, \cite{JRS}, \cite{GSH} regarding tropical (co)homology and the tropical cycle class map.

\subsubsection{Tropical cycles}\label{subsubsec:cycle}
For a boundaryless rational polyhedral space $X$, let $X^{\text {reg }}$ denote the open subset of points $x \in X$ that have a neighborhood isomorphic (as boundaryless rational polyhedral spaces) to an open subset of $\mathbb{R}^n$, for some $n \in \mathbb{N}$. 

\begin{definition}\label{Def:tropcycle}
 A tropical $k$-cycle on $X$ is a function $A: X \rightarrow \mathbb{Z}$, with support $|A|=\overline{\{x \in X \mid A(x) \neq 0\}}$ being either empty or a purely $k$-dimensional polyhedral subset of $X$, such that $A$ is nonzero precisely on $|A|^{\text {reg }}$ where it is locally constant. Furthermore, $A$ must satisfy a so-called \textit{balancing condition} on $|A|^{\text{reg}}$ (see \cite{AR10} for details).  
\end{definition} 
 One can appropriately define the sum of two tropical $k$-cycles on $X$ which makes the set $Z_k(X)$ of tropical $k$-cycles on $X$ into an abelian group. A tropical cycle $A$ is defined to be \textit{effective} if it is everywhere nonnegative. Moreover, any proper morphism $f: X \rightarrow Y$ of boundaryless rational polyhedral spaces induces a \textit{pushforward} $f_*: Z_k(X) \rightarrow Z_k(Y)$ of tropical cycles, which is a homomorphism. If $A \in Z_k(X)$, then the support of the pushforward cycle $f_* A\in Z_k(Y)$ is the closure of the set $(f|A|)_k \subseteq f|A|$, where the local dimension of $f|A|$ is $k$. If $X$ is compact then the constant map $X\rightarrow Y=$ point, is proper. Identifying the tropical 0-cycles on a point with $\mathbb{Z}$, the pushforward then defines a homomorphism $\int_X: Z_0(X) \rightarrow \mathbb{Z}$ which we call the degree map.

A \textit{rational function} on a boundaryless rational polyhedral space $X$ is a continuous function $\phi: X \rightarrow \mathbb{R}$ such that $\phi$ is piecewise affine with integral slopes in every chart. As this is a local condition, rational functions define a sheaf $\mathscr{M}_X$ of abelian groups, into which the sheaf $\Aff_X$ naturally injects. 
\begin{definition}\label{Def:cartdiv}
   The group of \textit{tropical Cartier divisors} on $X$ is defined as: $$\operatorname{CDiv}(X)=\Gamma\left(X, \mathscr{M}_X / \operatorname{Aff}_X\right)\, .$$ 
\end{definition} 
For every $\phi \in \Gamma\left(X, \mathscr{M}_X\right)$, we denote its image in $\operatorname{CDiv}(X)$ by $\operatorname{div}(\phi)$, and refer to it as the associated \textit{principal Cartier divisor}. As in classical algebraic geometry, there exists a natural bilinear map $\operatorname{CDiv}(X) \times Z_k(X) \rightarrow$ $Z_{k-1}(X)$ given by the intersection pairing of divisors and tropical cycles. Note that a boundaryless rational polyhedral space $X$ does not automatically have a natural fundamental cycle, that is, there is no canonical element in $Z_\bullet(X)\coloneqq\oplus_{k=0}^{\dim X}Z_k(X)$ in general. When such a canonical cycle exists, it is defined as follows.
\begin{definition}\label{Def:fundcyc}
A boundaryless rational polyhedral space $X$ is said to \textit{have a fundamental cycle}, denoted by $[X]$, if $X$ is pure-dimensional and the extension by $0$ of the constant function with value $1$ on $X^{\text{reg }}$ defines a tropical cycle. 
\end{definition}
A Cartier divisor $D \in \operatorname{CDiv}(X)$ on a tropical space $X$ admitting a fundamental cycle is defined to be \textit{effective} if its \textit{associated Weil divisor} $[D]\coloneqq D \cdot[X]$ is effective. For $X$ a tropical manifold, there exists a fundamental cycle $[X]$ which serves as the unity of the \textit{tropical intersection product} on $Z_\bullet(X)$. This makes $Z_\bullet(X)$ into a ring. The tropical intersection product is compatible with intersections with Cartier divisors, that is, $D \cdot A=[D] \cdot A$ for every Cartier divisor $D \in \operatorname{CDiv}(X)$ and tropical cycle $A \in Z_\bullet(X)$. Moreover, the morphism:
$$
\operatorname{CDiv}(X) \rightarrow Z_{\operatorname{dim}(X)-1}(X)\, , \quad D \mapsto[D]
$$
is an isomorphism (see \cite[Corollary 4.9]{Fran13}), whereby one can use tropical Cartier divisors and codimension-$1$ tropical cycles on a tropical manifold interchangeably. 

\subsubsection{Line bundles}\label{subsec:linbundle}
A \textit{tropical line bundle} on a boundaryless rational polyhedral space $X$ is defined as an $\Aff_X$-torsor. Analogous to topological line bundles, a tropical line bundle $\mathscr{L}$ corresponds to a morphism $\pi_\mathscr{L}: Y \rightarrow X$ of boundaryless rational polyhedral spaces such that there are local trivializations of $\pi_\mathscr{L}$ on $X$, where two such trivializations are related via translation by an integral affine function. Analogous to the classical situation, a standard \v{C}ech cohomology argument shows that the set of isomorphism classes of tropical line bundles on $X$ is in natural bijection to the group $H^1\left(X, \mathrm{Aff}_X\right)$ of tropical \v{C}ech cocycles. Consequently, isomorphism classes of tropical line bundles form a group. 

A \textit{rational section} of a tropical line bundle $Y \rightarrow X$ is a continuous section, given by a rational function in all trivializations. As in classical algebraic geometry, there also exists a bijection between $\operatorname{CDiv}(X)$ and isomorphism classes of pairs $(\mathscr{L}, s)$, where $\mathscr{L}$ is a tropical line bundle on $X$ and $s$ is a rational section of $\mathscr{L}$ .

\subsubsection{Tropical (co)homology}\label{subsubsec:tropcohom}
We briefly recall the bigraded homology theory for tropical spaces introduced in \cite{IKMZ}. A sheaf theoretic approach to tropical (co)homology is given in \cite{GSH}, which is our main reference. Let $X$ be a boundaryless rational polyhedral space. To define the tropical homology and cohomology groups, one needs sheaves $\Omega_X^p$ of tropical $p$-forms for $p>0$. The cotangent sheaf $\Omega_X^1$ on $X$ is defined as the quotient sheaf $\Aff_X/\bR_X$, where $\bR_X$ is the constant sheaf associated to $\bR$. For $p>1$, the sheaves $\Omega_X^p$ are defined to be the image of the sheaf homomorphism $\bigwedge^p \Omega_X^1 \rightarrow \iota_*\left(\left.\bigwedge^p \Omega_X^1\right|_{X^{\mathrm{reg}}}\right)$, where $\iota: X^{\text{reg}}\hookrightarrow X$ is the natural inclusion. This is because $\Omega_X^p$ is the natural extension of the sheaf $\bigwedge^p \Omega_X^1$ on $X^{\text{reg}}$ to $X$, such that $\Omega_X^p=0$ for $p>\dim X$. As shown in \cite[Example~2.9]{GSH}, $\bigwedge^p\Omega_X^1$ defined globally on $X$ may fail to be $0$ even if $p>\dim X$. 

We refer the reader to \cite{IKMZ} and \cite{GSH} for the precise definitions of tropical homology $H_{p,q}(X)$ and cohomology $H^{p,q}(X)$ for a general tropical space $X$ using (co)chain complexes. We do not need the general theory in this paper. However, we note that there exists a natural isomorphism $H^{p, q}(X) \cong H^q\left(X, \Omega_X^p\right)$ of tropical cohomology with abelian sheaf cohomology.

\subsubsection{The first Chern class map and tropical cycle class map}\label{subsubsec:cherncycleclass} The quotient map $d: \operatorname{Aff}_X \rightarrow \Omega_X^1$ of sheaves on a boundaryless rational polyhedral space $X$ induces a morphism:
$$
c_1\coloneqq H^1(d): H^1\left(X, \operatorname{Aff}_X\right) \rightarrow H^1\left(X, \Omega_X^1\right) \cong H^{1,1}(X)\, ,
$$
called the \textit{first Chern class map} from the group of all tropical line bundles on $X$ to the $(1,1)$-tropical cohomology group of $X$. Using the first Chern class map, any divisor $D \in \operatorname{CDiv}(X)$ has an associated $(1,1)$-cohomology class $c_1(\mathscr{L}(D))$, where $\mathscr{L}(D)\in H^1(X, \Aff_X)$ is the line bundle associated to $D\in\operatorname{CDiv}(X)$.

As in classical algebraic geometry, there is a tropical cycle class map which assigns a class in tropical homology to every tropical cycle. More precisely, on any closed rational polyhedral space $X$, there exist morphisms:
$$
\operatorname{cyc}: Z_k(X) \rightarrow H_{k, k}(X)
$$
for every $k \in \mathbb{N}$. We refer the reader to \cite[\S3E]{GSJ} for an explicit description of the cycle class map for $k=1$. If $X$ is a closed tropical manifold, then tropical homology and cohomology are dual to each other by the following homomorphism, which turns out to be an isomorphism:
\[H^{*,*}(X)\rightarrow H_{*,*}(X)\, , \ \ c\mapsto c\frown \operatorname{cyc[X]}\, .\]
Here, $\frown$ denotes the cap product. The first Chern class map $c_1$ and the cycle class map for $k=\dim(X)-1$ are related via the following commutative diagram:
\begin{equation}\label{Eqn:c1cyc}
    \begin{tikzcd}
	{\operatorname{CDiv}(X)=Z_{\dim(X)-1}(X)} && {H_{\dim(X)-1,\dim(X)-1}(X)} \\
	{H^1(X,\Aff_X)} && {H^{1,1}(X)}
	\arrow["{\operatorname{cyc}}", from=1-1, to=1-3]
	\arrow[from=1-1, to=2-1]
	\arrow["{-\frown\operatorname{cyc}[X]}", from=1-3, to=2-3]
	\arrow["{c_1}"', from=2-1, to=2-3]
\end{tikzcd}
\end{equation}

\section{Real tori with integral structure}\label{sec:RealtoriPrelim}

Having briefly described the general preliminaries for this paper, we now record some preliminaries on the tropical geometry of real tori with integral structures, which shall play an important role in the rest of this paper. Again, we refer the reader to \cite{GSJ}, \cite[\S4]{RZ} for the details.

\subsection{Real tori and integral structures}\label{subsec:intrealtori}
Let $N$ be a lattice, and let $\Lambda \subseteq N_{\mathbb{R}}=N \otimes_{\mathbb{Z}} \mathbb{R}$ be a second lattice of full rank, that is equal to the rank of $N$. Any isomorphism $N \cong \mathbb{Z}^n$ induces a well-defined rational polyhedral structure on $N_\bR$. The \textit{real torus with integral structure} associated to $N$ and $\Lambda$ is the quotient $X=N_{\mathbb{R}} / \Lambda$, with the sheaf of integral affine functions being the one induced by $N_{\mathbb{R}}$, that is, if $\pi: N_{\mathbb{R}} \rightarrow X$ is the quotient map then $\Aff_X\coloneqq\pi_{*}\Aff_{N_\bR}$ is the pushforward of the sheaf of integral affine functions on the universal cover of $X$. The integral affine structure on $X$ is induced by $N$ and not by $\Lambda$.

Furthermore, $X$ has a natural group structure induced by the vector space addition on $N_\bR$, which is compatible with the integral structure. Thus, real tori with integral structures are group objects in the category of boundaryless rational polyhedral spaces.

\subsection{Tropical (co)homology of real tori with integral structures}\label{subsec:realtoricohom}

Let $X=N_{\mathbb{R}} / \Lambda$ be a real torus with integral structure. The group law and the tropical cross product endow the tropical homology groups of $X$ with the additional structure of the Pontryagin product.

\begin{definition}\label{Def:Pontryaginprod} Let $X$ be a real torus with integral structure with group law $\mu: X \times X \rightarrow X$. The tropical Pontryagin product is defined as the pairing:
$$
(\alpha, \beta) \mapsto \alpha \star \beta\coloneqq\mu_*(\alpha \times \beta)\, ,
$$
where $\alpha$ and $\beta$ are elements of $Z_*(X)$ (respectively, $H_{*, *}(X)$) and $\alpha\times\beta$ is an element of $Z_*(X\times X)$ (respectively, of $H_{*,*}(X\times X)$) denotes the cross product in cycles (respectively, homology). We obtain homomorphisms:
$$
\begin{gathered}
\star: Z_i(X) \otimes_{\mathbb{Z}} Z_j(X) \rightarrow Z_{i+j}(X)\, , \\
\star: H_{i, j}(X) \otimes_{\mathbb{Z}} H_{k, l}(X) \rightarrow H_{i+k, j+l}(X)\, ,
\end{gathered}
$$
for all choices of natural numbers $i, j, k, l$, which we call the Pontryagin product on tropical cycles and homology, respectively.
\end{definition}

By \cite[Proposition~6.2]{GSJ}, the Pontryagin product on cycles commutes with the Pontryagin product on tropical homology via the cycle class map. For real torus with integral structure $X=N_{\mathbb{R}} / \Lambda$, the tropical homology groups $H_{*, *}(X)$ and the Pontryagin product can be described explicitly. By observing that $\Omega^p_X$ equals the constant sheaf $(\bigwedge^pM)_X$ for $M\coloneqq\Hom(N,\bZ)=N^*$, one obtains the following canonical isomorphism for all $p,q\geq 0$:
\begin{equation}\label{Eqn:Torushom}
H_{p, q}(X) \cong H_q\left(X ; \bigwedge^p N\right) \cong H_q(X ; \mathbb{Z}) \otimes_{\mathbb{Z}} \bigwedge^p N \cong \bigwedge^q \Lambda \otimes_{\mathbb{Z}} \bigwedge^p N.
\end{equation}

We refer the reader to \cite[\S6]{GSJ} for the details of the above isomorphism. It is straightforward to check that, with the identification~\eqref{Eqn:Torushom}, the tropical Pontryagin product on $H_{\bullet, \bullet}(X)$ satisfies $(\alpha \otimes \omega) \star(\beta \otimes \xi)=(\alpha \wedge \beta) \otimes(\omega \wedge \xi)$. Similarly, one obtains the following explicit description for the tropical cohomology of $X$, which is dual to the description of tropical homology in \eqref{Eqn:Torushom}:
\begin{equation}\label{Eqn:Toruscohom}
H^{p, q}(X) \cong \bigwedge^q \Lambda^* \otimes_{\mathbb{Z}} \bigwedge^p N^*\, .
\end{equation}
With this identification, the tropical cup product on $H^{\bullet, \bullet}(X)$ satisfies $(\alpha \otimes \omega) \smile(\beta \otimes \xi)=(\alpha \wedge \beta) \otimes(\omega \wedge \xi)$. 

By the descriptions of the tropical homology and cohomology given in \eqref{Eqn:Torushom} and \eqref{Eqn:Toruscohom}, the tropical cap product pairing between homology and cohomology can be explicitly expressed by $(\alpha \otimes \omega) \frown(\beta \otimes \xi)=(\alpha\lrcorner \beta) \otimes(\omega\lrcorner \xi)$,
where " $\lrcorner$ " denotes the interior product on the exterior algebra.

In bidegree $(1,1)$ the description \eqref{Eqn:Toruscohom} produces an isomorphism $H^{1,1}(X) \cong \Lambda^* \otimes_{\mathbb{Z}} N^*$, which we can further identify with $\Hom(\Lambda \otimes_{\mathbb{Z}} N, \mathbb{Z})$. So, an element of $H^{1,1}(X)$ can ve viewed as a bilinear form on $N_{\mathbb{R}}$ having integer values on $\Lambda \times N$. We shall routinely use this interpretation in the rest of the paper.

\subsection{Line bundles on real tori with integral structures} \label{subsec:realtorilinebundle}

 We now briefly describe the theory of tropical line bundles on real tori with integral structures, following \cite{GSJ}. This will be used in Section~\ref{sec:Poincarebundle} in the construction of the Poincar\'e bundle, which is the central object in Fourier--Mukai theory.

\subsubsection{Tropical factors of automorphy}\label{subsubsec:autfactor} As before, let $X=N_{\mathbb{R}} / \Lambda$ be a real torus with integral structure. As described in \S\ref{subsec:linbundle} tropical line bundles on $X$ form a group, canonically identified with $H^1\left(X, \mathrm{Aff}_X\right)$. In this case, furthermore, there is a canonical isomorphism $H^1\left(X, \operatorname{Aff}_X\right) \cong H^1\left(\Lambda, \Gamma\left(N_{\mathbb{R}}, \operatorname{Aff}_{N_{\mathbb{R}}}\right)\right)$ of $H^1(X,\Aff_X)$ with the first cohomology group of $\Gamma(N_\bR, \operatorname{Aff}_{N_\bR})$ with its natural $\Lambda$-action (see \cite[\S7A]{GSJ}).
 An element of $H^1\left(\Lambda, \Gamma\left(N_{\mathbb{R}}, \Aff_{N_{\mathbb{R}}}\right)\right)$ is represented by a \textit{tropical factor of automorphy}, which is a function $a: \Lambda \times N_{\mathbb{R}} \rightarrow \mathbb{R}$, satisfying:
\begin{equation}\label{Eqn:autfact}
a(\lambda+\mu, x)=a(\lambda, \mu+x)+a(\mu, x)
\end{equation}
for all $\mu, \lambda \in \Lambda$ and $x \in N_{\mathbb{R}}$. Two factors of automorphy represent the same element of $H^1\left(\Lambda, \Gamma\left(N_{\mathbb{R}}, \operatorname{Aff}_{N_{\mathbb{R}}}\right)\right)$ if and only if their difference is a factor of automorphy of the form $(\lambda, x) \mapsto m_{\mathbb{R}}(\lambda)$, where $m_{\mathbb{R}}$ is the $\mathbb{R}$-linear extension of a $\mathbb{Z}$-linear form $m: N \rightarrow \mathbb{Z}$.
Any factor of automorphy $a(-,-)$ defines a group action of $\Lambda$ on the trivial line bundle $N_{\mathbb{R}} \times \mathbb{R}$ on $N_{\mathbb{R}}$, defined by $\lambda .(x, b)=(x+\lambda, b+$ $a(\lambda, x))$. The tropical line bundle on $X$ corresponding to $a(-,-)$ is the quotient $\left(N_{\mathbb{R}} \times \mathbb{R}\right) / \Lambda$ via the above action.

\subsubsection{Tropical Appell--Humbert theorem}\label{subsubsec:Appell--Humbert} For every homomorphism $l \in \Hom(\Lambda, \bR)$ and every symmetric bilinear form $E$ on $N_\bR$ such that $E(\Lambda \times N) \subseteq \bZ$, we obtain a tropical factor of automorphy given by:
\begin{equation}\label{Eqn:Factaut}
a_{E,l} (\lambda, x) = l(\lambda) -E(\lambda, x) -\frac{1}{2}E(\lambda, \lambda)\, .
\end{equation}
 Let $\sL(E, l)$ denote the associated tropical line bundle $(N_\bR\times \bR)/\Lambda$ on $X$ defined by the $\Lambda$-action on $N_\bR\times\bR$ by $a_{E,l}$. By \cite{GSJ}*{Proposition~7.1}, the first Chern class $c_1(\sL(E, l))$ of $\sL(E, l)$ is equal to $E$, after identifying $H^{1,1}(X)$ with $\Hom(\Lambda\otimes_\bZ N, \bZ)$. Similar to the theory of line bundles on complex tori, there is a tropical Appell--Humbert theorem classifying tropical line bundles on real tori with integral structures (see  \cite{GSJ}*{Theorem 7.2}).

\begin{theorem}[tropical Appell--Humbert]
\label{thm:tropical-appell-humbert}
    Let $\sL$ be a tropical line bundle on the real torus with integral structure $X = N_{\bR} / \Lambda$. Then there exists $l \in \Hom(\Lambda, \bR)$ and a symmetric form $E$ on $N_{\bR}$ with $E(\Lambda \times N) \subseteq \bZ$ such that $\sL \cong \sL(E, l).$ Moreover, given another choice $l' \in \Hom(\Lambda, \bR)$ and symmetric form $E'$ on $N_{\bR}$ with $E'(\Lambda \times N) \subseteq \bZ$, we have $\sL \cong \sL(E', l')$ if and only if $E = E'$ and the linear form $(l-l')_{\bR}: N_{\bR} \rightarrow \bR$ has integer values on $N$.
\end{theorem}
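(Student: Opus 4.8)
The plan is to pass to group cohomology and reduce the classification to a computation with factors of automorphy. By the identifications recalled above, isomorphism classes of tropical line bundles on $X$ are in bijection with $H^1(X,\Aff_X)\cong H^1(\Lambda,\Gamma(N_\bR,\Aff_{N_\bR}))$, and every class is represented by a tropical factor of automorphy, i.e.\ a $1$-cocycle $a\colon\Lambda\times N_\bR\to\bR$ satisfying \eqref{Eqn:autfact}. Set $\mathcal{A}\coloneqq\Gamma(N_\bR,\Aff_{N_\bR})$ and write $M=N^*=\Hom(N,\bZ)$. The global integral affine functions on $N_\bR$ are exactly $x\mapsto m(x)+c$ with $m\in M$ and $c\in\bR$, so as a $\Lambda$-module $\mathcal{A}$ fits into a short exact sequence
\begin{equation}\label{eq:ahSES}
0\to\bR\to\mathcal{A}\to M\to 0,
\end{equation}
where the constants $\bR$ and the slopes $M$ carry the trivial $\Lambda$-action, but the extension is nontrivial: translating $x\mapsto m(x)$ by $\lambda$ shifts the constant term by $m(\lambda)$. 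This is the structural input that will force the symmetric form $E$ to appear.

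Next I would run the long exact sequence in group cohomology attached to \eqref{eq:ahSES}. Since $\Lambda\cong\bZ^g$ is free abelian, for the (free or real) trivial modules in play one has $H^q(\Lambda,A)\cong\bigwedge^q\Lambda^*\otimes A$; in particular $H^1(\Lambda,\bR)=\Hom(\Lambda,\bR)$, $H^1(\Lambda,M)=\Hom(\Lambda,M)$, and $H^2(\Lambda,\bR)=\bigwedge^2\Lambda^*\otimes\bR$. The relevant segment is
\begin{equation}\label{eq:ahLES}
\Hom(\Lambda,\bR)\xrightarrow{i_*}H^1(\Lambda,\mathcal{A})\xrightarrow{p_*}\Hom(\Lambda,M)\xrightarrow{\delta}\textstyle\bigwedge^2\Lambda^*\otimes\bR.
\end{equation}
The key computation is the connecting map $\delta$: lifting $\phi\in\Hom(\Lambda,M)$ along the set-theoretic section $\lambda\mapsto(x\mapsto\phi(\lambda)(x))$ of zero constant term and taking the coboundary produces (up to sign) the $\bR$-valued $2$-cocycle $(\lambda,\mu)\mapsto\phi(\lambda)(\mu)$, whose class in $\bigwedge^2\Lambda^*\otimes\bR$ is its antisymmetrization. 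Hence $\phi\in\ker\delta=\operatorname{im}p_*$ precisely when the bilinear form $\beta_\phi(\lambda,\mu)=\phi(\lambda)(\mu)$ on $\Lambda$ is symmetric. Because $\Lambda$ has full rank in $N_\bR$ and each $\phi(\lambda)$ is already defined on all of $N$, such a symmetric $\phi$ extends uniquely to a symmetric $\bR$-bilinear form $E$ on $N_\bR$ with $E(\Lambda\times N)\subseteq\bZ$, determined by $E(\lambda,n)=-\phi(\lambda)(n)$; conversely, $-E(\lambda,\cdot)$ is exactly the slope of the factor $a_{E,0}$ of \eqref{Eqn:Factaut}, so $p_*[a_{E,0}]=\phi$. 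I expect identifying $\delta$ and recognizing its kernel as the symmetric forms to be the main obstacle, since this is what both forces the symmetry of $E$ and explains the shape of \eqref{Eqn:Factaut}.

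With $\delta$ understood, existence follows from exactness of \eqref{eq:ahLES}: given $\sL$ with representing cocycle $a$, its slope $\phi=p_*[a]$ lies in $\ker\delta$ and so yields a symmetric $E$ as above; then $p_*([a]-[a_{E,0}])=0$, whence $[a]-[a_{E,0}]=i_*[l]$ for some $l\in\Hom(\Lambda,\bR)$. As $i_*[l]$ is represented by $(\lambda,x)\mapsto l(\lambda)$ and $a_{E,l}(\lambda,x)-a_{E,0}(\lambda,x)=l(\lambda)$, we conclude $\sL\cong\sL(E,l)$. For the uniqueness clause, the form $E$ is forced because $c_1(\sL(E,l))=E$ is an isomorphism invariant (as recalled before the theorem via \cite{GSJ}*{Proposition~7.1}), so $\sL(E,l)\cong\sL(E',l')$ already gives $E=E'$. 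For fixed $E$ the two bundles agree iff the difference cocycle $(\lambda,x)\mapsto(l-l')(\lambda)$ is a $1$-coboundary, and the $1$-coboundaries for $\mathcal{A}$ are, as recalled before the theorem, precisely the factors $(\lambda,x)\mapsto m_\bR(\lambda)$ with $m\in M=N^*$. Thus triviality holds iff $(l-l')_\bR$ restricts to an integral form on $N$, i.e.\ iff $(l-l')_\bR\colon N_\bR\to\bR$ takes integer values on $N$, which is the asserted condition. A direct route, bypassing the abstract long exact sequence, is to verify by hand that $a_{E,l}$ satisfies \eqref{Eqn:autfact} (the verification uses the symmetry of $E$) and to compute the coboundaries explicitly; the cohomological argument above merely organizes these checks.
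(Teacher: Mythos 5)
Your argument is correct, but note that the paper itself does not prove this theorem: it is imported verbatim from \cite{GSJ}*{Theorem 7.2}, so there is no internal proof to compare against. Your group-cohomological route is a faithful (and slightly more structural) packaging of the standard argument, which is usually run directly: write a factor of automorphy as $a(\lambda,x)=c(\lambda)+b(\lambda)(x)$ with slope $b(\lambda)\in N^*$ and constant term $c(\lambda)$; the cocycle condition \eqref{Eqn:autfact} forces $b\in\Hom(\Lambda,N^*)$ and forces $(\lambda,\mu)\mapsto b(\lambda)_\bR(\mu)$ to be symmetric (it equals $c(\lambda+\mu)-c(\lambda)-c(\mu)$, which is visibly symmetric); then $E\coloneqq -b$ extended bilinearly and $l(\lambda)\coloneqq c(\lambda)+\tfrac12E(\lambda,\lambda)$ give $a=a_{E,l}$ on the nose. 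Your connecting-map computation is this same calculation in disguise: $\delta(\phi)$ is the class of the bilinear cocycle $(\lambda,\mu)\mapsto\phi(\mu)_\bR(\lambda)$, and its vanishing in $H^2(\Lambda,\bR)\cong\bigwedge^2\Lambda^*\otimes\bR$ amounts to symmetry because a symmetric bilinear cocycle is the coboundary of $\lambda\mapsto\tfrac12\beta_\phi(\lambda,\lambda)$ --- which is exactly the origin of the quadratic term $-\tfrac12E(\lambda,\lambda)$ in \eqref{Eqn:Factaut}. Three small points to tighten. First, when you write $\phi(\lambda)(\mu)$ for $\mu\in\Lambda$ you must mean the $\bR$-linear extension $\phi(\lambda)_\bR(\mu)$, since $\Lambda\not\subseteq N$ in general; the symmetry you extract lives on $\Lambda\times\Lambda$ and extends to $N_\bR\times N_\bR$ because two $\bR$-bilinear forms agreeing on the spanning set $\Lambda\times\Lambda$ coincide. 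Second, the identification of $H^2(\Lambda,\bR)$ with alternating forms, under which the class of a bilinear cocycle is its antisymmetrization, is standard for $\Lambda\cong\bZ^g$ but does use divisibility of $\bR$ (via the explicit primitive above); it deserves a citation or the one-line verification. Third, you can avoid invoking $c_1(\sL(E,l))=E$ for the uniqueness of $E$: coboundaries $(\lambda,x)\mapsto m_\bR(\lambda)$ have zero slope, so the slope datum $p_*[a_{E,l}]=-E(\cdot,-)|_{\Lambda\times N}$ is already an isomorphism invariant, and $E=E'$ follows from the full rank of $\Lambda$; this keeps the classification self-contained rather than resting on \cite{GSJ}*{Proposition~7.1}.
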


\begin{remark}\label{rem:picgrp}
    The group law $\otimes$ on the set $\operatorname{Pic}(X)$ of isomorphism classes of tropical line bundles on $X=N_\bR/\Lambda$ can be described using Theorem~\ref{thm:tropical-appell-humbert} as:
    \[
    \sL(E_1, l_1)\otimes \sL(E_2, l_2)\cong\sL(E_1+E_2, l_1+l_2)\, .
    \]
\end{remark}

\subsection{Dual real tori with integral structures}\label{subsec:dualrealtori}
In this section we study some functorial properties of the dual $\widehat{X}=\Lambda_{\mathbb{R}}^{\ast}/N^{\ast}$ of a real torus with integral structure $X=N_{\mathbb{R}}/\Lambda$ .
By Theorem~\ref{thm:tropical-appell-humbert}, there is a bijection between the group of all tropical line bundles with trivial first Chern class and $\dual{X}$, whereby one can identify $\widehat{X}\cong \pic^0(X)$ and equip $\pic^0(X)$ with integral structure. 

For any $x\in N_{\mathbb{R}}$ let $\overline{x}\coloneqq\pi(x)$, where $\pi: N_{\mathbb{R}}\rightarrow X$ is the natural quotient map. Let $t_{\overline{x}}: X\rightarrow X$ denote the translation by $\overline{x}$ map on $X$.  

\begin{proposition}[Tropical theorem of the square]\label{prop:thmofsq}
    Let $t_{\overline{x}}^{*}\sL$ denote the pullback of any tropical line bundle $\sL$ on $X$ along the translation map $t_{\overline{x}}$. Then, for all $\overline{x}, \ \overline{y}\in X=N_{\mathbb{R}}/\Lambda$ and $\sL\in \operatorname{Pic}(X)$, we have:
    \[t^{*}_{\overline{x}+\overline{y}}\sL=t^{*}_{\overline{x}}\sL\otimes t^{*}_{\overline{y}}\sL\otimes \sL^{-1}\, .\]
\end{proposition}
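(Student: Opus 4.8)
The plan is to reduce the statement to the tropical Appell--Humbert classification (Theorem~\ref{thm:tropical-appell-humbert}) together with the group law recorded in Remark~\ref{rem:picgrp}, by first computing how pullback along a translation acts on factors of automorphy. The guiding principle is that translation should leave the first Chern class $E$ unchanged and only shift the "linear part'' $l$, and this shift is additive in the translation vector --- which is exactly what makes the theorem of the square fall out of the additive group law on $\operatorname{Pic}(X)$.

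The key step is the following translation formula. Fix a lift $x_0 \in N_\bR$ of $\overline{x}_0 \in X$, so that $t_{\overline{x}_0}$ is induced by the $\Lambda$-equivariant map $y \mapsto y + x_0$ on $N_\bR$. Unwinding the construction of \S\ref{subsubsec:autfactor}, the pullback $t_{\overline{x}_0}^* \sL$ of a bundle with factor of automorphy $a(-,-)$ is represented by $(\lambda, y) \mapsto a(\lambda, y + x_0)$. Applying this to $\sL = \sL(E, l)$ and using \eqref{Eqn:Factaut} together with the bilinearity of $E$, I compute
\begin{equation*}
a_{E,l}(\lambda, y + x_0) = \bigl(l(\lambda) - E(\lambda, x_0)\bigr) - E(\lambda, y) - \tfrac{1}{2}E(\lambda, \lambda) = a_{E,\, l - E(-, x_0)}(\lambda, y)\, ,
\end{equation*}
where $E(-, x_0)\colon \lambda \mapsto E(\lambda, x_0)$ lies in $\Hom(\Lambda, \bR)$. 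Hence $t_{\overline{x}_0}^* \sL(E, l) \cong \sL(E, l - E(-, x_0))$.

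With this formula in hand, the theorem follows by direct bookkeeping. Writing $\sL = \sL(E, l)$ and fixing lifts $x, y \in N_\bR$ of $\overline{x}, \overline{y}$, I take $x + y$ as a lift of $\overline{x} + \overline{y}$ and obtain $t_{\overline{x}+\overline{y}}^*\sL \cong \sL(E, l - E(-, x+y))$ for the left-hand side. For the right-hand side, the translation formula gives $t_{\overline{x}}^*\sL \cong \sL(E, l - E(-, x))$ and $t_{\overline{y}}^*\sL \cong \sL(E, l - E(-, y))$, while $\sL^{-1} \cong \sL(-E, -l)$; combining these via the group law of Remark~\ref{rem:picgrp} yields $\sL\bigl(E,\, (l - E(-,x)) + (l - E(-,y)) - l\bigr) = \sL(E, l - E(-,x) - E(-,y))$. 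Since $E(-, x+y) = E(-, x) + E(-, y)$ by bilinearity, the two sides agree.

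The only genuinely delicate point --- and the step I expect to be the main obstacle --- is the well-definedness of the translation formula: $t_{\overline{x}_0}$ depends only on $\overline{x}_0 = x_0 + \Lambda$, whereas $l - E(-, x_0)$ depends on the chosen lift $x_0$. Replacing $x_0$ by $x_0 + \mu$ with $\mu \in \Lambda$ changes the linear part by $-E(-, \mu)$, whose $\bR$-linear extension is the form $v \mapsto E(v, \mu)$ on $N_\bR$; since $\mu \in \Lambda$ and $E$ is symmetric with $E(\Lambda \times N) \subseteq \bZ$, this form takes integer values on $N$, so by the isomorphism criterion in Theorem~\ref{thm:tropical-appell-humbert} the class $\sL(E, l - E(-, x_0))$ is independent of the lift. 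This is precisely the tropical shadow of the classical fact that translations act trivially on the N\'eron--Severi group, and it is the one place where the integrality hypothesis on $E$ is essential.
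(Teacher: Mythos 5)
Your proof is correct and takes essentially the same approach as the paper: both write $\sL\cong\sL(E,l)$ via Theorem~\ref{thm:tropical-appell-humbert}, reduce everything to the translation formula $t_{\overline{x}}^{*}\sL(E,l)\cong \sL(E,\ l-E(-,x))$, and conclude by the group law of Remark~\ref{rem:picgrp}. The only difference is that the paper cites this translation formula from \cite{GSJ}*{Proposition~7.5}, whereas you derive it directly from the factor-of-automorphy description; your computation, and your lift-independence check using the integrality of $E$ on $\Lambda\times N$, are both correct.
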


\begin{proof}
    By Theorem~\ref{thm:tropical-appell-humbert}, $\sL\cong \sL(E, l)$ for some $l \in \Hom(\Lambda, \bR)$ and a symmetric bilinear form $E$ on $N_{\bR}$ with $E(\Lambda \times N) \subseteq \bZ$. Let $x, y \in N_{\mathbb{R}}$ such that $\pi(x)=\overline{x}$ and $\pi(y)=\overline{y}$. By \cite{GSJ}*{Proposition 7.5} and Remark~\ref{rem:picgrp}, we have
    \begin{align*}
        t^{\ast}_{\overline{x}+\overline{y}}\sL\cong \sL(E, l-E(-, x+y))=\sL(E, l-E(-,x)-E(-,y))\\ =\sL(E+E-E, l-E(-,x)+l-E(-,y)-l)\\\cong t^{\ast}_{\overline{x}}\sL(E,l)\otimes t^{\ast}_{\overline{y}}\sL(E, l)\otimes \sL(-E,-l)\\\cong t^{\ast}_{\overline{x}}\sL\otimes t^{\ast}_{\overline{y}}\sL\otimes \sL^{-1}\, .
    \end{align*}
\end{proof}
By \cite[Proposition 7.5]{GSJ}, one can check that for any $\overline{x}\in X$ and $\sL\in \operatorname{Pic}(X)$, the line bundle $t^{\ast}_{\overline{x}}\sL\otimes \sL^{-1}$ has trivial first Chern class. Thus, for any $\sL\in\operatorname{Pic}(X)$, we have a map $\phi_{\sL}: X\rightarrow \widehat{X}$ given by $\overline{x}\mapsto t^{\ast}_{\overline{x}}\sL\otimes \sL^{-1}$. By Proposition~\ref{prop:thmofsq}, it follows that $\phi_{\sL}$ is also a group homomorphism for any $\sL$. So therefore it is a morphism of real tori with integral structures. The following lemma shows $\phi_{\sL}: X\rightarrow \dual{X}$ can always be lifted to a homomorphism $N_{\bR}\rightarrow \Lambda^{\ast}_{\bR}$ of the universal covers of the tori.

\begin{lemma}\label{lem:analyticrep}
    For $\sL=\sL(E,l)\in \operatorname{Pic}(X)$, the morphism $\phi_{\sL}: X\rightarrow \widehat{X}$ is induced by a linear map $\Phi_E: N_{\mathbb{R}}\rightarrow \Lambda^{\ast}_{\mathbb{R}}$ on the universal covers defined by $\Phi_E(x)=-E(-,x)$.
\end{lemma}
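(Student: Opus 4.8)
The plan is to compute $\phi_\sL(\overline{x})$ explicitly as a line bundle in Appell--Humbert normal form, and then translate that data across the identification $\widehat{X}=\Lambda^*_\bR/N^*\cong\pic^0(X)$. First I would reuse the computation already carried out in the proof of Proposition~\ref{prop:thmofsq}: by \cite{GSJ}*{Proposition~7.5} one has $t^*_{\overline{x}}\sL\cong\sL(E,\,l-E(-,x))$, so that by the group law of Remark~\ref{rem:picgrp},
\[
\phi_\sL(\overline{x})=t^*_{\overline{x}}\sL\otimes\sL^{-1}\cong\sL\bigl(E,\,l-E(-,x)\bigr)\otimes\sL(-E,-l)\cong\sL\bigl(0,\,-E(-,x)\bigr)\, .
\]
In particular the first Chern class vanishes, re-confirming that $\phi_\sL(\overline{x})\in\pic^0(X)$, and the entire morphism is encoded by the homomorphism-valued assignment $x\mapsto -E(-,x)$.

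Next I would pin down the dictionary $\widehat{X}=\Lambda^*_\bR/N^*\cong\pic^0(X)$ furnished by Theorem~\ref{thm:tropical-appell-humbert}: a Chern-trivial bundle $\sL(0,l')$ corresponds to the class of $l'\in\Hom(\Lambda,\bR)=\Lambda^*_\bR$ modulo $N^*$, since the uniqueness clause of Appell--Humbert says $\sL(0,l')\cong\sL(0,l'')$ exactly when $(l'-l'')_\bR$ takes integer values on $N$, i.e. $l'-l''\in N^*$ inside $\Lambda^*_\bR$. Under this identification the previous step gives $\phi_\sL(\overline{x})=[-E(-,x)]=[\Phi_E(x)]$, which is precisely the value of the composite $\pi_{\widehat{X}}\circ\Phi_E$ at $x$.

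It then remains to check that $\Phi_E\colon N_\bR\to\Lambda^*_\bR$, $x\mapsto -E(-,x)$, is a bona fide linear lift, i.e.\ that it descends to a map $X\to\widehat{X}$. Linearity is immediate from bilinearity of $E$. For descent I must verify $\Phi_E(\Lambda)\subseteq N^*$: for $\lambda\in\Lambda$ and $n\in N$, the symmetry of $E$ together with the hypothesis $E(\Lambda\times N)\subseteq\bZ$ give $-E(n,\lambda)=-E(\lambda,n)\in\bZ$, so $\Phi_E(\lambda)$ takes integer values on $N$ and hence lies in $N^*$. Therefore $\pi_{\widehat{X}}\circ\Phi_E$ factors through $\pi\colon N_\bR\to X$, and the induced map on the quotient agrees with $\phi_\sL$ by the first two steps. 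The argument is essentially a bookkeeping exercise; the only point that needs genuine care — and thus the main obstacle — is keeping the three identifications ($\pic^0(X)$, $\Lambda^*_\bR/N^*$, and linear functionals on $N_\bR$) mutually consistent, and noticing that it is exactly the symmetry of $E$ (feeding on the integrality condition $E(\Lambda\times N)\subseteq\bZ$) that makes the descent step go through.
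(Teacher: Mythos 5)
Your proposal is correct and follows essentially the same route as the paper's proof: identify $\phi_{\sL}(\overline{x})$ with $\sL(0,-E(-,x))$ via the pullback formula of \cite{GSJ}*{Proposition~7.5} and the Appell--Humbert dictionary for $\widehat{X}\cong\pic^0(X)$, then check that $\Phi_E$ is linear and descends because symmetry plus $E(\Lambda\times N)\subseteq\bZ$ forces $\Phi_E(\Lambda)\subseteq N^*$. The only difference is one of exposition: you spell out the tensor-product computation and the uniqueness clause of Theorem~\ref{thm:tropical-appell-humbert} that the paper compresses into a single ``by definition'' chain of equalities.
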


\begin{proof}
    Let $\pi: N_{\mathbb{R}}\rightarrow X$ and $\pi^{\ast}: \Lambda^{\ast}_{\mathbb{R}}\rightarrow \widehat{X}$ be the natural projection maps from the respective universal covers.  Since $E$ is a symmetric bilinear form on $N_{\mathbb{R}}=\Lambda_{\mathbb{R}}$ such that $E(\Lambda\times N)\subseteq\bZ$, the map $\Phi_{E}: N_{\mathbb{R}}\rightarrow \Lambda^{\ast}_{\mathbb{R}}$ is a $\mathbb{R}$-linear map. If $x\in \Lambda$ we have $\Phi_E(x)=-E(-,x)=-E(x,-)\in N^{\ast}$. Therefore, $\Phi_E$ descends to a map $X\rightarrow \widehat{X}$. Finally, by definition, we have: $$\pi^{\ast}(\Phi_{E}(x))=\pi^{\ast}(-E(-,x))=\sL(0, -E(-,x))=\phi_{\sL}(\overline{x})=\phi_{\sL}(\pi(x))\, .$$
\end{proof}

\begin{remark}\label{rem:isonondegen}\hfill
    \begin{itemize} 
        \item[(i)] Let $K(\sL)\coloneqq\ker\phi_{\sL}\subseteq X$. Then $K(\sL)=\Lambda(\sL)/\Lambda$, where $\Lambda(\sL)\coloneqq\{x\in N_{\mathbb{R}}\mid \ E(N,x)\subseteq \bZ\}=\Phi_E^{-1}(N^{\ast})$. Using Theorem~\ref{thm:tropical-appell-humbert}, it is straightforward to verify that $K(\sL)$ satisfies properties analogous to \cite[Lemma 2.4.7]{CAV}.
        \item[(ii)]  The map $\phi_{\sL}:X\rightarrow \widehat{X}$ has finite kernel if and only if $\Lambda\subseteq\Lambda(\sL)$ is a finite index sublattice. Since $\Lambda\subset N_\bR$ has full rank, this is equivalent to $\Lambda(\sL)\subset N_\bR$ being a lattice of full rank. This is also equivalent to $E$ being a nondegenerate bilinear form on $N_\bR$. In this case, we define degree of $\phi_\sL$ to be the index $\deg \phi_\sL\coloneqq[\Lambda(\sL):\Lambda]$.
    \end{itemize}
\end{remark}

\subsection{Tropical abelian varieties}\label{subsec:tropAV}

Let $X=N_{\mathbb{R}}/\Lambda$ be a real torus with integral structure. In analogy to complex abelian varieties, a polarization on $X$ is, by definition, the first Chern
class $E = c_1(\sL)$ of a \textit{positive-definite line bundle} $\sL$ on $X$, meaning its first Chern class is positive-definite as a bilinear form. We sometimes refer to the positive-definite line bundle $\sL$  as the polarization. It is well known that a line bundle on a complex abelian variety is ample if and only if it is a positive-definite (see \cite[Proposition 4.5.2]{CAV}). This motivates the following definitions.

\begin{definition}\label{def:tropample}
    A tropical line bundle $\sL(E, l)$ on a real torus with integral structure $X=N_{\mathbb{R}}/\Lambda$  is said to be \textit{ample} if it is a positive-definite line bundle, that is, $c_1(\sL)=E$ is a positive-definite bilinear form on $N_{\mathbb{R}}$.
\end{definition}

\begin{definition}\label{def:tropAV}
A \textit{tropical abelian variety} is defined as a real torus with integral structure $X$ , admitting a polarization $E = c_1(\sL)$. The pair $(X,E)$ (or sometimes written as $(X, \sL)$) is called a \textit{polarized tropical abelian variety}.  Alternatively, a tropical abelian variety is a real torus with integral structure $X$  admitting an ample tropical line bundle $\sL$.
\end{definition}

Note that for any line bundle $\sL(E, l)$, the first Chern class, $c_1(\sL(E, l))=E$ is an element of $H^{1,1}(X)=\Lambda^{\ast}\otimes_{\mathbb{Z}}N^{\ast}\cong \Hom(\Lambda, N^{\ast})$, whereby $E$ induces a homomorphism of lattices $\rho_E: \Lambda\rightarrow N^{\ast}$ defined by $\rho_E(\lambda)=E(\lambda,-)\in N^{\ast}$ for all $\lambda\in \Lambda$. We define a tropical line bundle $\sL$ to be \textit{nondegenerate} if its first Chern class $c_1(\sL)=E$ is nondegenerate as a bilinear form. For a nondegenerate (and, in particular, positive-definite) line bundle $\sL(E, l)$, the induced lattice homomorphism $\rho_{E}:\Lambda\rightarrow N^{\ast}$ is injective, because it induces an isomorphism $(\rho_E)_{\bR}: \Lambda_\bR\rightarrow N^{\ast}_\bR$ after tensoring with $\mathbb{R}$. By Remark~\ref{rem:isonondegen} (ii), the map $\phi_\sL:X\rightarrow \widehat{X}$ has finite degree if and only if $\sL$ is a nondegenerate line bundle. A \textit{principally polarized tropical abelian variety} is a polarized abelian variety $(X, E)$, such that the induced lattice homomorphism $\rho_E: \Lambda\rightarrow N^{\ast}$ is an \textit{isomorphism of lattices}.

\section{Tropical Poincar\'e bundle}\label{sec:Poincarebundle}
For $X$ a real torus with integral structure, the dual torus $\dual{X}$, by definition, parametrizes the isomorphism classes of line bundles in $\pic^0(X)$. Analogous to complex abelian varieties, one would hope for a tropical line bundle on the product $X\times \dual{X}$ inducing all the line bundles of $\pic^0(X)$. Indeed, we now define the \textit{tropical Poincar\'e bundle} on the product $X\times \dual{X}$, which serves the purpose of such a universal line bundle. 

\begin{definition}\label{def:Poincare}
    Let $X=N_{\mathbb{R}}/\Lambda$ be a real torus with integral structure, and let $\dual{X}$ be its dual. A tropical Poincar\'e bundle $\mathcal{P}_X$ on $X\times \dual{X}$ is a tropical line bundle on the product torus, satisfying:
    \begin{enumerate}
        \item $\mathcal{P}_X|_{X\times\{L\}}\simeq L$ for every $L\in\dual{X}$, and
        \item $\mathcal{P}_X|_{\{0\}\times \dual{X}}$ is trivial.
    \end{enumerate}
\end{definition}

\textit{A priori}, the existence of such a line bundle $\mathcal{P}_X$ on $X\times\dual{X}$ is not evident from the definition. We will show that there indeed exists such a line bundle, unique up to isomorphism. Subsequently, we call this \textit{the Poincar\'e bundle} $\mathcal{P}_X$ on $X\times\dual{X}$. The following lemma establishes a weak form of the \textit{seesaw principle} for real tori with integral structures, which shall be useful in proving the uniqueness of the Poincar\'e bundle.

\subsection{A weak seesaw principle for real tori with integral structures}\label{subsec:seesaw}
In this section we prove a weak version of a tropical analogue of the classical seesaw theorem for algebraic varieties (see \cite[\S5]{MumAV}). The seesaw theorem in its full generality is proved using sheaf cohomology and base change. We circumvent this for real tori with integral structures via an application of the tropical Appell--Humbert theorem.

\begin{lemma}[A weak seesaw principle]\label{lem:seesaw}
    Let $X=N_{\mathbb{R}}/\Lambda$ and $Y=N'_{\mathbb{R}}/\Lambda'$ be real tori with integral structures. Let $\sL\in \pic(X\times Y)$ be a tropical line bundle such that:
    \begin{enumerate}
        \item[(a)] $\sL|_{X\times\{y\}}$ is trivial for every $y\in Y$, and
        \item[(b)] $\sL|_{\{0\}\times Y}$ is trivial.
    \end{enumerate}
    Then $\sL$ is the trivial line bundle on $X\times Y$.
\end{lemma}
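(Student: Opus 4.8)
The plan is to reduce everything to the tropical Appell--Humbert theorem (Theorem~\ref{thm:tropical-appell-humbert}) applied to the product $X\times Y=(N\times N')_\bR/(\Lambda\times\Lambda')$. Writing $\sL\cong\sL(E,l)$ for a symmetric form $E$ on $N_\bR\times N'_\bR$ with $E((\Lambda\times\Lambda')\times(N\times N'))\subseteq\bZ$ and $l\in\Hom(\Lambda\times\Lambda',\bR)$, I first decompose $E$ into blocks
\[
E\big((x,x'),(y,y')\big)=A(x,y)+B(x,y')+B(y,x')+C(x',y'),
\]
where $A$ and $C$ are symmetric forms on $N_\bR$ and $N'_\bR$ and $B\colon N_\bR\times N'_\bR\to\bR$ is bilinear (the two off-diagonal blocks are forced to be transposes of one another by the symmetry of $E$). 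Since $\sL(0,l)$ is trivial precisely when $l_\bR$ is integer-valued on $N\times N'$, the goal is to use (a) and (b) to show $A=B=C=0$ together with the integrality of the two ``partial'' linear forms $l(-,0)$ and $l(0,-)$.

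The first step is to identify the restriction to $X\times\{\overline{y}\}$. The inclusion $\overline{x}\mapsto(\overline{x},\overline{y})$ lifts to $x\mapsto(x,y_0)$ on universal covers (with $\pi'(y_0)=\overline{y}$) and intertwines the $\Lambda$-action with the action of $(\lambda,0)\in\Lambda\times\Lambda'$, so substituting into \eqref{Eqn:Factaut} yields
\[
a_{E,l}\big((\lambda,0),(x,y_0)\big)=\big[\,l(\lambda,0)-B(\lambda,y_0)\,\big]-A(\lambda,x)-\tfrac12 A(\lambda,\lambda).
\]
Hence $\sL|_{X\times\{\overline{y}\}}\cong\sL\big(A,\ \ell_{y_0}\big)$ with $\ell_{y_0}(\lambda)=l(\lambda,0)-B(\lambda,y_0)$. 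As \emph{every} $y_0\in N'_\bR$ is a lift of some $\overline{y}\in Y$, hypothesis (a) and the triviality criterion of Theorem~\ref{thm:tropical-appell-humbert} give $A=0$ and $(\ell_{y_0})_\bR\in N^{*}$ for all $y_0\in N'_\bR$. Taking $y_0=0$ shows $l(-,0)_\bR\in N^{*}$, and subtracting gives $B(-,y_0)\in N^{*}$ for every $y_0$; but then for each fixed $n\in N$ the assignment $y_0\mapsto B(n,y_0)$ is an $\bR$-linear, $\bZ$-valued functional on the real vector space $N'_\bR$, hence identically zero. Therefore $B=0$.

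Symmetrically, restricting along $\overline{x}'\mapsto(0,\overline{x}')$ gives $\sL|_{\{0\}\times Y}\cong\sL\big(C,\ l(0,-)\big)$, so hypothesis (b) forces $C=0$ and $l(0,-)_\bR\in(N')^{*}$. At this point $E=0$. Since $l(\lambda,\lambda')=l(\lambda,0)+l(0,\lambda')$ by linearity, the two integrality statements combine to $l_\bR\in N^{*}\times(N')^{*}=(N\times N')^{*}$, i.e.\ $l_\bR$ is integer-valued on $N\times N'$. By Theorem~\ref{thm:tropical-appell-humbert} this means $\sL\cong\sL(0,l)$ is trivial, as desired.

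The block bookkeeping is routine; the step requiring the most care is the correct reading of the Appell--Humbert triviality criterion, which only forces the linear part to be a $\bZ$-valued form on the lattice rather than literally zero, so one must track these ``integer shifts'' throughout rather than hoping the $\ell_{y_0}$ vanish. The conceptual crux is the elementary observation that an $\bR$-linear functional taking only integer values on a real vector space must vanish; this is exactly what converts the \emph{family} of trivial restrictions in hypothesis (a) into the vanishing of the cross-term $B$, which is the heart of the seesaw phenomenon.
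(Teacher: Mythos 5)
Your proof is correct, and it follows the same overall route as the paper's: present $\sL$ via the tropical Appell--Humbert theorem on the product torus, restrict the factor of automorphy to the slices $X\times\{y\}$ and $\{0\}\times Y$, and apply the triviality criterion of Theorem~\ref{thm:tropical-appell-humbert}. The difference is in how the key step is executed, and your version is the more careful one. The paper's proof asserts that triviality of $\sL|_{X\times\{y\}}$ forces the restricted factor of automorphy $a_{E,l}((\lambda,0),(x,y))$ to vanish identically, and deduces from this that the form $E_y(x_1,x_2)\coloneqq E((x_1,0),(x_2,y))$ is independent of $y$ and symmetric; but the Appell--Humbert criterion only says that a factor defines the trivial bundle when its symmetric part vanishes and its linear part is integer-valued on the lattice, not zero. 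Your argument tracks exactly this integer ambiguity: you package the restricted factor as honest Appell--Humbert data $(A,\ell_{y_0})$ with $\ell_{y_0}(\lambda)=l(\lambda,0)-B(\lambda,y_0)$, conclude $A=0$ and that $(\ell_{y_0})_\bR$ is integral on $N$ for every lift $y_0\in N'_\bR$, and then eliminate the cross block $B$ by the observation that an $\bR$-linear functional on a real vector space taking only integer values must vanish. That observation is what turns ``trivial for every $y$'' into $B=0$ --- the actual seesaw content --- and it is precisely the point the paper's write-up elides by overstating the triviality criterion. In short: same strategy, but your bookkeeping of the integer shifts makes rigorous a step that the paper's own proof leaves imprecise.
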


\begin{proof}
    Note that $X\times Y= (N_\bR\oplus N'_\bR)/(\Lambda\oplus\Lambda')$ is a real torus with integral structure. By Theorem~\ref{thm:tropical-appell-humbert}, there exists $l\in \Hom(\Lambda\oplus \Lambda', \bR)$ and a symmetric bilinear form $E$ on $N_\bR\oplus N'_\bR$ with $E((\Lambda\oplus \Lambda')\times (N\oplus N'))\subseteq \mathbb{Z}$ such that $\sL=\sL(E, l)$. Let $a_{E,l}:(\Lambda\oplus\Lambda')\times(N_\bR\oplus N'_\bR)\rightarrow \bR$ be the corresponding tropical factor of automorphy of $\sL$; for $(\lambda, \lambda')\in \Lambda\oplus\Lambda'$ and $(x, x')\in N_\bR\oplus N'_\bR$:
    \[ a_{E,l}((\lambda,\lambda'),(x, x'))=l(\lambda, \lambda')-E((\lambda,\lambda'),(x, x'))-\frac{1}{2}E((\lambda,\lambda'),(\lambda, \lambda'))\, .\]
    For any $y\in Y$, the restriction $\sL|_{X\times\{y\}}$ is given by the factor $a_{E,l}|_{(\Lambda\oplus \{0\})\times(N_\bR\oplus\{y\})}$:
    \begin{equation}\label{Eq1}
         a_{E,l}((\lambda,0),(x, y))=l(\lambda, 0)-E((\lambda,0),(x, y))-\frac{1}{2}E((\lambda,0),(\lambda, 0))\, .
    \end{equation}
    Let $\tilde{l}\coloneqq l(-,0)\in \Hom(\Lambda, \bR)$ and $E_y(x_1, x_2)\coloneqq E((x_1,0),(x_2,y))$ for $x_1, x_2\in N_\bR$. Since $\sL|_{X\times\{y\}}$ is trivial for all $y\in Y$ by (a), it follows that $a_{E,l}((\lambda,0),(x, y))=0$ for all $\lambda \in \Lambda$, $x\in N_\bR$ and $y\in N'_\bR$.  By rearranging \eqref{Eq1}, it follows that $E_y$ is independent of $y$ due to triviality of  $\sL|_{X\times\{y\}}$. Consequently, we have $E_y(x_1,x_2)=E_0(x_1, x_2)=E_0(x_2, x_1)=E_y(x_2, x_1)$. It follows that $E_y$ is a symmetric bilinear form on $N_\bR$ such that $E_y(\Lambda, N)\subseteq \bZ$. For $\lambda\in \Lambda$ and $x\in N_\bR$, we can rewrite \eqref{Eq1} as:
    \begin{equation}\label{Eq2}
        0=\tilde{l}(\lambda)-E_y(\lambda, x)-\frac{1}{2}E_y(\lambda, \lambda)\, .
    \end{equation}
    Thus, the trivial line bundle on $X$ is isomorphic to $\sL(\tilde{l}, E_y)$. Since the trivial line bundle is given by $\sL(0, \mathbf{0})$, where $0\in \Hom(\Lambda, \bR)$ is the zero map and $\mathbf{0}$ is the trivial bilinear form on $N_\bR$, by Theorem~\ref{thm:tropical-appell-humbert}, we see that $\tilde{l}$ is a $\bR$-linear extension of a linear form $N\rightarrow \bZ$ and $E_y=\mathbf{0}$ for all $y\in Y$.

    The restriction $\sL|_{\{0\}\times Y}$ is given by the factor $a_{E,l}|_{(\{0\}\oplus \Lambda')\times(\{0\}\oplus N'_\bR)}$:
    \begin{equation}\label{Eq3}
        a_{E,l}((0,\lambda'),(0, x'))=l(0, \lambda')-E((0,\lambda'),(0, x'))-\frac{1}{2}E((0,\lambda'),(0, \lambda'))\, .
    \end{equation}
    The triviality of $\sL|_{\{0\}\times Y}$ by (b) implies that the left hand side of \eqref{Eq3} is $0$. Letting $\tilde{l}'\in \Hom(\Lambda', \bR)$ defined by $\tilde{l}'(-)\coloneqq l(0,-)$ and $E'_0(x_1', x_2')\coloneqq E((0, x_1'), (0, x_2'))$ for $x_1', x_2'\in N'_\bR$, a similar argument as above shows that $\tilde{l}'$ is a $\bR$-linear extension of a linear form $\Lambda'\rightarrow \bZ$ and $E'_0=\mathbf{0}'$, the trivial bilinear form on $N'_\bR$. Since $l\in \Hom(\Lambda\oplus \Lambda', \bR)$ is equal to $\tilde{l}+\tilde{l}'$, we see that $l$ is a $\bR$-linear extension of a linear form $\Lambda\oplus \Lambda'\rightarrow \bZ$. The triviality of the bilinear forms $E_y$ on $N_\bR$ for all $y\in Y$ and $E'_0$ on $N'_{\bR}$ implies that $E$ is the zero bilinear form on $N_\bR\oplus N'_\bR$. Thus, $\sL=\sL(l, E)=\sL(0, \mathbf{0})$. Therefore $\sL$ is isomorphic to the trivial line bundle on $X\times Y$ by another application of Theorem~\ref{thm:tropical-appell-humbert}.
\end{proof}

\subsection{Construction of the tropical Poincar\'e bundle}\label{subsec:Poincareconstruction}

We next prove the existence and uniqueness of the tropical Poincar\'e bundle on $X\times\widehat{X}$.
\begin{theorem}\label{Poincare:existence}
There exists a tropical Poincar\'e bundle $\mathcal{P}_X$ on $X\times \dual{X}$, uniquely determined up to isomorphism.

\end{theorem}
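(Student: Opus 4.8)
The plan is to prove existence and uniqueness separately, both relying on the tropical Appell--Humbert theorem (Theorem~\ref{thm:tropical-appell-humbert}) and the weak seesaw principle (Lemma~\ref{lem:seesaw}).

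For \emph{existence}, I would write $X = N_\bR/\Lambda$ and $\dual{X} = \Lambda^*_\bR/N^*$, so that the product torus is $X \times \dual{X} = (N_\bR \oplus \Lambda^*_\bR)/(\Lambda \oplus N^*)$, a real torus with integral structure. The natural candidate is the line bundle whose first Chern class is the canonical pairing between $N_\bR$ and $\Lambda^*_\bR$. Concretely, I would define a symmetric bilinear form $E_\cP$ on $N_\bR \oplus \Lambda^*_\bR$ by $E_\cP((x,\xi),(x',\xi')) = \tfrac{1}{2}(\xi'(x) + \xi(x'))$, using the evaluation pairing $\Lambda^*_\bR \times N_\bR \to \bR$ (identifying $N_\bR = \Lambda_\bR$). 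One checks $E_\cP$ is symmetric and takes integer values on $(\Lambda \oplus N^*) \times (N \oplus \Lambda)$, since $N^*$ pairs integrally with $N$ and $\Lambda$ pairs integrally with $\Lambda^*$ by construction of the dual lattice. Setting $\cP_X \coloneqq \sL(E_\cP, 0)$, I would verify the two defining properties of Definition~\ref{def:Poincare}: restricting to $X \times \{L\}$ with $L = \pi^*(\xi) \in \dual{X}$, the factor of automorphy $a_{E_\cP,0}$ specializes, by formula~\eqref{Eqn:Factaut} and Lemma~\ref{lem:analyticrep}, to the factor $a_{0,\,\xi}$ defining $L$ itself (the symmetric-form part restricts to zero on $N_\bR \times \{0\}$ while the cross term produces the linear functional $\xi$); and restricting to $\{0\} \times \dual{X}$ kills the cross term entirely, giving the trivial bundle. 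This establishes both properties (1) and (2).

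For \emph{uniqueness}, I would take two Poincar\'e bundles $\cP_X$ and $\cP_X'$ and consider $\sL \coloneqq \cP_X \otimes (\cP_X')^{-1}$ on $X \times \dual{X}$. By property (1), for each $L \in \dual{X}$ we have $\cP_X|_{X \times \{L\}} \simeq L \simeq \cP_X'|_{X \times \{L\}}$, so $\sL|_{X \times \{L\}}$ is trivial for every $L$; by property (2), $\sL|_{\{0\} \times \dual{X}}$ is trivial. These are precisely hypotheses (a) and (b) of Lemma~\ref{lem:seesaw} (with $Y = \dual{X}$), which forces $\sL$ to be trivial, i.e.\ $\cP_X \simeq \cP_X'$. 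This is the clean payoff of having set up the weak seesaw principle in advance.

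The main obstacle I anticipate is the existence half, specifically the bookkeeping in verifying property (1): one must carefully track how the factor of automorphy $a_{E_\cP, 0}$ on the product restricts to the slice $X \times \{L\}$ and confirm that the resulting factor of automorphy on $X$ is equivalent, in the sense of Theorem~\ref{thm:tropical-appell-humbert}, to the one representing the prescribed bundle $L \in \dual{X}$. The subtlety is that the identification $\dual{X} \cong \pic^0(X)$ sends a point $\xi \in \Lambda^*_\bR$ (modulo $N^*$) to the bundle $\sL(0, \xi|_\Lambda)$, so I must confirm that the cross term $-E_\cP((\lambda,0),(0,\xi))$ indeed reproduces $\xi(\lambda)$ up to the normalization in~\eqref{Eqn:Factaut} and modulo the integer-valued ambiguity allowed by Appell--Humbert. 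Getting the factor of $\tfrac{1}{2}$ and the sign conventions consistent with Lemma~\ref{lem:analyticrep} is where I would spend the most care; everything else reduces to the two general results already established.
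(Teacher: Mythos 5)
Your uniqueness argument is exactly the paper's: both defining properties of Definition~\ref{def:Poincare} pass to $\cP_X \otimes (\cP_X')^{-1}$, and Lemma~\ref{lem:seesaw} forces that bundle to be trivial. The genuine gap is in the existence half, and it sits precisely at the spot you flagged but did not resolve. Your bilinear form $E_\cP((x,\xi),(x',\xi')) = \frac{1}{2}(\xi'(x)+\xi(x'))$ does not satisfy the integrality hypothesis of Theorem~\ref{thm:tropical-appell-humbert}: taking $\lambda \in \Lambda$ and $l^* \in \Lambda^*$ with $l^*(\lambda)=1$, one gets $E_\cP((\lambda,0),(0,l^*)) = \frac{1}{2} \notin \bZ$, so $E_\cP((\Lambda\oplus N^*)\times(N\oplus\Lambda^*)) \not\subseteq \bZ$ and the symbol $\sL(E_\cP,0)$ does not denote a tropical line bundle at all. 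The claim that ``one checks \dots{} takes integer values'' is false. Note that the $\frac{1}{2}$ is not needed for symmetry --- the sum $\xi'(x)+\xi(x')$ is already symmetric --- so it must be dropped outright, not merely tracked through the computation.

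Moreover, even ignoring integrality, your candidate fails property (1) of Definition~\ref{def:Poincare} under the identification $\dual{X}\cong\pic^0(X)$ that you yourself state. Plugging $E_\cP$ into \eqref{Eqn:Factaut} with $l=0$ gives, on the slice $X\times\{L\}$ with representative $l^*_L \in \Lambda^*_\bR$, the factor $a((\lambda,0),(x,l^*_L)) = -\frac{1}{2}\, l^*_L(\lambda)$, i.e.\ the bundle $\sL(0,-\frac{1}{2}l^*_L)$; by Theorem~\ref{thm:tropical-appell-humbert} this is isomorphic to $L=\sL(0,l^*_L)$ only when $\frac{3}{2}l^*_L$ takes integer values on $N$, which fails for general $L$. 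The sign is not a convention choice either: correcting only the $\frac{1}{2}$ but keeping the plus sign, i.e.\ $E((x,\xi),(x',\xi'))=\xi'(x)+\xi(x')$, makes the restriction $\sL(0,-l^*_L)\cong L^{-1}$, still not $L$. The form that works is the paper's $E((n_1,\lambda_1),(n_2,\lambda_2)) = -\lambda_2(n_1)-\lambda_1(n_2)$: the two minus signs in \eqref{Eqn:Factaut} then cancel against the minus sign in $E$, the factor on $X\times\{L\}$ becomes exactly $a_{0,l^*_L}$, so $\cP_X|_{X\times\{L\}}\cong L$, and the restriction to $\{0\}\times\dual{X}$ is visibly trivial. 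With that one replacement your outline goes through and coincides with the paper's proof.
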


\begin{proof}
    \textit{Existence}: The product $X\times\dual{X}$ is given by the quotient $(N_{\bR}\oplus \Lambda^{\ast}_\bR)/(\Lambda\oplus N^{\ast})$, with integral structure given by the lattice $N\oplus \Lambda^{\ast}\subseteq N_{\bR}\oplus \Lambda^{\ast}_\bR$. Define a symmetric bilinear form $E: (N_\bR\oplus \Lambda^{\ast}_\bR)\times(N_\bR\oplus \Lambda^{\ast}_\bR)\rightarrow \bR$ as follows:
    \[ E((n_1, \lambda_1), (n_2,\lambda_2))=-\lambda_2(n_1)-\lambda_1(n_2)\, .\]
    Note that $E((\Lambda\oplus N^{\ast})\times (N\oplus \Lambda^{\ast}))\subseteq \mathbb{Z}$ since $\Lambda^{\ast}\coloneqq\Hom_{\bZ}(\Lambda, \bZ)$ and, similarly for $N^{\ast}$. Let $l=\mathbf{0}\in \Hom(\Lambda\oplus N^{\ast}, \bR)$ be the zero map. By Theorem~\ref{thm:tropical-appell-humbert}, there exists a line bundle $\mathcal{P}_X\coloneqq\sL(E, 0)$ on $X\times\dual{X}$, with first Chern class $c_1(\mathcal{P}_X)=E$. We claim that $\mathcal{P}_X$ is a construction of the desired Poincar\'e bundle. For this, we need to check the properties (a) and (b) of Definition~\ref{def:Poincare}. The tropical factor of automorphy $a_{\mathcal{P}_X}:(\Lambda\oplus N^{\ast})\times (N_\bR\oplus \Lambda^{\ast}_\bR)\rightarrow \bR$ of $\mathcal{P}_X$ is given by:
    \begin{align*}
    a_{\mathcal{P}_X}((\lambda, \nu^{\ast}), (n, l^{\ast}))=l(\lambda, \nu^{\ast})-E((\lambda, \nu^{\ast}), (n, l^{\ast}))-\frac{1}{2}E((\lambda, \nu^{\ast}), (\lambda, \nu^{\ast}))\\= 0+l^{\ast}(\lambda)+\nu^{\ast}(n)+\frac{1}{2}\nu^{\ast}(\lambda)+\frac{1}{2}\nu^{\ast}(\lambda)\\=l^{\ast}(\lambda)+\nu^{\ast}(n)+\nu^{\ast}(\lambda)\, .
    \end{align*}
    To check Definition~\ref{def:Poincare}(a), for $L\in \dual{X}=\Lambda^{\ast}_\bR/N^{\ast}$, let $l^{\ast}_{L}\in \Lambda^{\ast}_{\bR}$ be a representative of $L$. The tropical factor of automorphy of the line bundle $\mathcal{P}_X|_{X\times\{L\}}$ is given by:
     \begin{align*}
        a_{\mathcal{P}_X}((\lambda, 0^{\ast}), (n, l^{\ast}_{L}))=l^{\ast}_{L}(\lambda)+0^{\ast}(n)+0^{\ast}(\lambda)=l^{\ast}_{L}(\lambda)=a_{0,l^{\ast}_L}(\lambda, n)\, ,
    \end{align*}
    which defines the line bundle $\sL(0, l^{\ast}_L)$ on $X\cong X\times\{L\}$. Since $l^{\ast}_L\in\Lambda^*_\bR$ is a representative of $L$, Theorem~\ref{thm:tropical-appell-humbert} implies $\sL(0, l^{\ast}_L)\cong L$, which establishes $\mathcal{P}_X|_{X\times\{L\}}\cong L$ for any $L\in \dual{X}$. This argument is independent of the choice of the representative $l^{\ast}_L$ of $L$, as any such two representative differ by an element of $N^*$ and, hence, represent the same line bundle by Theorem~\ref{thm:tropical-appell-humbert}.

    The tropical factor of automorphy of the restriction $\mathcal{P}_X|_{\{0\}\times \dual{X}}$ is given by $$a_{\mathcal{P}_X}((0, \nu^{\ast}), (0, l^{\ast}))=l^{\ast}(0)+\nu^{\ast}(0)+\nu^{\ast}(0)=0\, ,$$ which defines the trivial line bundle $\sL(0,0)$ on $\dual{X}\cong \{0\}\times \dual{X}$, thereby establishing Definition~\ref{def:Poincare}(b).\\

    \noindent\textit{Uniqueness}: Let $\mathcal{P}$ and $\mathcal{P}'$ be two candidates for a Poincar\'e bundle on $X\times \dual{X}$ satisfying Definition~\ref{def:Poincare}. Then $\mathcal{P}\otimes \mathcal{P}'^{-1}$ satisfies the hypothesis of Lemma~\ref{lem:seesaw} on $X\times\dual{X}$. 
\end{proof}

\subsection{First Chern class of the Poincar\'e bundle}\label{subsec:chernPoincare}

Let  $X=N_\bR/\Lambda$ be a real torus with integral structure with the Poincar\'e bundle by $\mathcal{P}_X\in \operatorname{Pic}(X\times\widehat{X})$. The first Chern class $E_X\coloneqq c_1(\mathcal{P}_X)$ of the Poincar\'e bundle is an element of $H^{1,1}(X\times\widehat{X})$ which can be explicitly identified with each of the following:
\begin{equation}\label{Eqn:ProdH11}
(\Lambda\oplus N^{\ast})^{\ast}\otimes (N\oplus \Lambda^{\ast})^{\ast}\cong \Hom(\Lambda\oplus N^{\ast}, (N\oplus \Lambda^{\ast})^{\ast})\cong\End(\Lambda\oplus N^{\ast})\, .
\end{equation}
Since $X\times\widehat{X}$ is a smooth compact pure-dimensional tropical manifold, the notions of Borel-Moore (co)homology and tropical (co)homology (as defined in \cite{GSH}) coincide for $X\times\widehat{X}$. Since the (co)homology groups of real tori with integral structures are torsion-free, by K\"unneth decomposition (\cite{GSH}*{Theorem B}) and Poincar\`e duality (\cite{GSH}*{Theorem D}), we obtain the decomposition:
\begin{equation}\label{eq:kunneth}
H^{1,1}(X\times\widehat{X})=\bigoplus_{0\leq p,q\leq 1}H^{p,q}(X)\otimes H^{1-p,1-q}(\widehat{X})\, .
\end{equation}
More concretely, $H^{1,1}(X\times \widehat{X})= (\Lambda^{\ast}\otimes N^{\ast})\oplus (\Lambda^{\ast}\otimes \Lambda)\oplus (N^{\ast}\otimes N)\oplus (N\otimes \Lambda)$. The following lemma shows that $c_1(\mathcal{P}_X)$ is contained in the middle two terms of the K\"unneth decomposition~\eqref{eq:kunneth}.

\begin{lemma}\label{lem:poincarecohom}
    $c_1(\mathcal{P}_X)\in (H^{1,0}(X)\otimes H^{0,1}(\widehat{X}))\oplus (H^{0,1}(X)\otimes H^{1,0}(\widehat{X}))=(N^{\ast}\otimes N)\oplus (\Lambda^{\ast}\otimes \Lambda)$.
\end{lemma}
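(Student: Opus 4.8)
The plan is to compute $c_1(\mathcal{P}_X) = E_X = E$ explicitly and read off its components under the concrete K\"unneth decomposition $H^{1,1}(X\times\widehat{X})= (\Lambda^{\ast}\otimes N^{\ast})\oplus (\Lambda^{\ast}\otimes \Lambda)\oplus (N^{\ast}\otimes N)\oplus (N\otimes \Lambda)$. Recall from the construction in Theorem~\ref{Poincare:existence} that the bilinear form $E$ on $(N_\bR\oplus \Lambda^{\ast}_\bR)\times(N_\bR\oplus \Lambda^{\ast}_\bR)$ is given by
\[
E((n_1, \lambda_1), (n_2,\lambda_2))=-\lambda_2(n_1)-\lambda_1(n_2)\, ,
\]
and that $c_1(\mathcal{P}_X)=E$ under the identification of $H^{1,1}(X\times\widehat{X})$ with bilinear forms having integer values on the lattice. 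So the whole statement reduces to matching this formula for $E$ against the four summands of the decomposition.

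The key step is to make the identification \eqref{Eqn:ProdH11}--\eqref{eq:kunneth} explicit. Under \eqref{Eqn:Toruscohom}, each summand corresponds to a type of pairing: $\Lambda^{\ast}\otimes N^{\ast}$ pairs a $\Lambda$-direction against an $N$-direction on the $X$ factor, $N\otimes\Lambda$ (i.e.\ $N^{\ast\ast}\otimes\Lambda^{\ast\ast}$ for $\widehat X$) pairs the dual torus against itself, while the two middle terms $\Lambda^{\ast}\otimes\Lambda$ and $N^{\ast}\otimes N$ are precisely the \emph{cross terms} pairing an $X$-direction against an $\widehat{X}$-direction. The plan is to argue that because $E$ is defined purely by the evaluation pairing $\Lambda^*_\bR \times N_\bR \to \bR$, it only involves one coordinate from $N_\bR$ (the $X$-side) and one from $\Lambda^{\ast}_\bR$ (the $\widehat X$-side) in each term: the term $-\lambda_2(n_1)$ pairs $n_1\in N$ against $\lambda_2\in\Lambda^*$, landing in $N^{\ast}\otimes N = H^{0,1}(X)\otimes H^{1,0}(\widehat X)$, and symmetrically $-\lambda_1(n_2)$ lands in $\Lambda^{\ast}\otimes\Lambda = H^{1,0}(X)\otimes H^{0,1}(\widehat X)$. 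The point is that $E$ has \emph{no} pure $X$--$X$ block (no term of the form $\langle n_1,n_2\rangle$, since there is no canonical pairing $N\times N\to\bZ$) and \emph{no} pure $\widehat X$--$\widehat X$ block (no term $\langle\lambda_1,\lambda_2\rangle$), which is exactly what it means to avoid the first summand $\Lambda^{\ast}\otimes N^{\ast}$ and the last summand $N\otimes\Lambda$.

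Concretely, I would fix dual bases and verify the claim on the level of coordinates: choose a basis $\{n_i\}$ of $N$ with dual basis $\{n_i^*\}$ of $N^*$, and a basis $\{\lambda_j\}$ of $\Lambda$ with dual basis $\{\lambda_j^*\}$ of $\Lambda^*$. The form $E$ expanded on the standard basis of $(N_\bR\oplus\Lambda^*_\bR)^{\otimes 2}$ has matrix blocks that are zero on the diagonal $N$--$N$ and $\Lambda^*$--$\Lambda^*$ positions and equal to $\pm$ the identity-type evaluation matrix on the off-diagonal $N$--$\Lambda^*$ positions. Reading these two off-diagonal blocks through the identification $H^{0,1}(X)\otimes H^{1,0}(\widehat X)\cong N^{\ast}\otimes N$ and $H^{1,0}(X)\otimes H^{0,1}(\widehat X)\cong \Lambda^{\ast}\otimes \Lambda$ gives the stated membership. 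The main obstacle, and the only genuinely delicate point, is bookkeeping: one must track carefully which tensor factor corresponds to which torus and which of $\Lambda^*_\bR,N_\bR$ supplies the $p$-grading versus the $q$-grading in each factor, since the decomposition \eqref{eq:kunneth} mixes the two bigradings and the dual torus swaps the roles of $N$ and $\Lambda^*$. Once the dictionary between \eqref{Eqn:Toruscohom} for $X$, for $\widehat X$, and the product is pinned down, the conclusion is immediate from the explicit formula for $E$.
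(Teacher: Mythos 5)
Your proposal is correct and takes essentially the same route as the paper's proof: compute $c_1(\mathcal{P}_X)=E_X$ from the explicit formula in the construction of Theorem~\ref{Poincare:existence}, observe that both terms are cross-pairings given by evaluation pairings (so there is no pure $X$--$X$ or $\widehat{X}$--$\widehat{X}$ block), and conclude membership in the two middle K\"unneth summands. One bookkeeping slip, harmless here since it only permutes the two summands and the lemma asserts membership in their direct sum: $-\lambda_2(n_1)$ pairs $n_1\in\Lambda$ against $\lambda_2\in\Lambda^{\ast}$ (it is the $\Lambda$--$\Lambda^{\ast}$ evaluation), so it lies in $\Lambda^{\ast}\otimes\Lambda=H^{0,1}(X)\otimes H^{1,0}(\widehat{X})$, while $-\lambda_1(n_2)$ pairs $n_2\in N$ against $\lambda_1\in N^{\ast}$, so it lies in $N^{\ast}\otimes N=H^{1,0}(X)\otimes H^{0,1}(\widehat{X})$ --- the opposite of the assignment in your proposal.
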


\begin{proof}
    This is immediate from the form of $c_1(\mathcal{P}_X)=E_x: (N_\bR\oplus \Lambda^{\ast}_\bR)\times(N_\bR\oplus \Lambda^{\ast}_\bR)\rightarrow \bR$ defined in the proof of Theorem~\ref{Poincare:existence}:
    \[E_X((n_1, \lambda_1), (n_2,\lambda_2))=-\lambda_2(n_1)-\lambda_1(n_2)\, .\]
    Since $N_\bR=\Lambda_\bR$ (and, analogously, for the duals), the bilinear form $E_X$ is the linear map:
    \[(n_1,\lambda_1)\otimes (n_2, \lambda_2)\mapsto (n_2\otimes \lambda_1, n_1\otimes\lambda_2)\mapsto -\lambda_1(n_2)-\lambda_2(n_1)\, .\]
    The final map above is the sum of the negatives of the $\bR$-linear extensions of the natural evaluation pairings $N\otimes N^{\ast}\rightarrow \bZ$ and $\Lambda\otimes \Lambda^{\ast}\rightarrow \bZ$, since $E_X((\Lambda\oplus N^{\ast})\times (N\oplus \Lambda^{\ast}))\subseteq \mathbb{Z}$. Thus, $E_X\in (N\otimes N^{\ast})^{\ast}\oplus (\Lambda\otimes \Lambda^{\ast})^{\ast}$.
\end{proof}

\begin{remark}\hfill
    \begin{itemize}
        \item [(i)] Since there is a natural identification $H^{1,1}(X\times\widehat{X})\cong \End(\Lambda\oplus N^{\ast})$ by \eqref{Eqn:ProdH11}, the element $c_1(\mathcal{P}_X)$ defines a lattice endomorphism of $\Lambda\oplus N^{\ast}$.
        \item [(ii)] We have $c_1(\mathcal{P}_X)\in \End(\Lambda)\oplus\End(N^*)\subset \End(\Lambda\oplus N^*)$. Thus, we can write \[c_1(\mathcal{P}_X)=\Psi_{\Lambda}+\Psi_{N^*}, \text{ where } \Psi_{\Lambda}\in \End(\Lambda), \ \Psi_{N^*}\in \End(N^*)\, .\] 
    \end{itemize}
\end{remark}

Let $\Psi_N:N\rightarrow N$ be the dual of $\Psi_{N^*}$, i.e, the image of $\Psi_{N^*}$ under the canonically isomorphism $\End(N^*)\cong \End(N)$. Consequently, for any integers $p,q\geq 0$,  we see that $c_1(\mathcal{P}_X)$ induces natural homomorphisms: 
\begin{equation}\label{Eqn:indhom}
\bigwedge^q\Psi_{\Lambda}\otimes \bigwedge^p\Psi_N: H^{p,q}(X)^\ast \rightarrow H^{q, p}(\widehat{X})\, .
\end{equation} 

\begin{lemma}\label{lem:map}
    $\Psi_{\Lambda}\in \End(\Lambda)$ and $\Psi_{N}\in \End(N)$ are the multiplication by $-1$ maps. 
\end{lemma}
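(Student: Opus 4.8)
The plan is to unwind the chain of identifications in \eqref{Eqn:ProdH11} explicitly and show that, viewed as an endomorphism of $\Lambda \oplus N^*$, the class $c_1(\mathcal{P}_X)$ is nothing but $-\operatorname{id}$. Once this is established, its two diagonal blocks immediately give $\Psi_\Lambda = -\operatorname{id}_\Lambda$ and $\Psi_{N^*} = -\operatorname{id}_{N^*}$, and since $-\operatorname{id}$ is manifestly preserved under the canonical duality $\End(N^*) \cong \End(N)$, it follows that $\Psi_N = -\operatorname{id}_N$ as well. The advantage of this formulation is that the conclusion is basis-independent, so I do not have to worry about the precise shape of the various tensor identifications beyond getting the overall sign and the block structure right.

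First I would recall from the proof of Theorem~\ref{Poincare:existence} that $c_1(\mathcal{P}_X) = E_X$ is the symmetric bilinear form $E_X((n_1,\lambda_1),(n_2,\lambda_2)) = -\lambda_2(n_1) - \lambda_1(n_2)$ on $N_\bR \oplus \Lambda^*_\bR$, and that for the product torus $X \times \widehat{X}$ the relevant lattices are the period lattice $\Lambda \oplus N^*$ and the integral-structure lattice $N \oplus \Lambda^*$. Following the convention $\rho_E(\lambda) = E(\lambda, -)$ used to identify $H^{1,1}$ with $\Hom(\Lambda \oplus N^*, (N\oplus\Lambda^*)^*)$, I would compute $\rho_{E_X}$ on a period-lattice element $(\lambda, \nu^*) \in \Lambda \oplus N^*$ by evaluating the functional $E_X((\lambda, \nu^*), -)$ against the integral-structure lattice $N \oplus \Lambda^*$. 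A direct substitution gives $E_X((\lambda,\nu^*),(n_2,\lambda_2)) = -\lambda_2(\lambda) - \nu^*(n_2)$, so reading off the two coordinates in $(N \oplus \Lambda^*)^* = N^* \oplus \Lambda$ yields $\rho_{E_X}(\lambda, \nu^*) = (-\nu^*, -\lambda)$, where $-\lambda$ is regarded as an element of $(\Lambda^*)^* = \Lambda$.

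Next I would apply the canonical isomorphism $(N \oplus \Lambda^*)^* = N^* \oplus \Lambda \cong \Lambda \oplus N^*$ (the coordinate swap together with $(\Lambda^*)^* = \Lambda$) that converts $\Hom(\Lambda \oplus N^*, (N\oplus\Lambda^*)^*)$ into $\End(\Lambda \oplus N^*)$ in \eqref{Eqn:ProdH11}. Under this swap, $\rho_{E_X}(\lambda, \nu^*) = (-\nu^*, -\lambda)$ becomes $(-\lambda, -\nu^*) \in \Lambda \oplus N^*$, so the induced endomorphism is $(\lambda, \nu^*) \mapsto -(\lambda, \nu^*)$, that is, $-\operatorname{id}_{\Lambda \oplus N^*}$. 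In particular it is block-diagonal, which independently recovers the splitting of Lemma~\ref{lem:poincarecohom}, and its diagonal blocks are exactly $\Psi_\Lambda = -\operatorname{id}_\Lambda$ and $\Psi_{N^*} = -\operatorname{id}_{N^*}$; dualizing the latter gives $\Psi_N = -\operatorname{id}_N$, completing the proof.

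The only real obstacle is bookkeeping rather than mathematics: one must evaluate the bilinear form against the correct (integral-structure) lattice when extracting coordinates in $(N \oplus \Lambda^*)^*$, and must carry along the swap $N^* \oplus \Lambda \cong \Lambda \oplus N^*$ implicit in \eqref{Eqn:ProdH11}. Once these conventions are pinned down, the computation is a single substitution, and the fact that the answer is $-\operatorname{id}$ rather than $+\operatorname{id}$ is precisely the sign built into the definition of $E_X$.
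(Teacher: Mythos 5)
Your proof is correct and is essentially the paper's argument: both start from the explicit bilinear form $E_X((n_1,\lambda_1),(n_2,\lambda_2))=-\lambda_2(n_1)-\lambda_1(n_2)$ and identify it, through the tensor--hom identification in \eqref{Eqn:ProdH11}, with the endomorphism $-\mathrm{id}_{\Lambda\oplus N^*}$, whose diagonal blocks give $\Psi_\Lambda=-\mathrm{id}_\Lambda$ and $\Psi_{N^*}=-\mathrm{id}_{N^*}$, and hence $\Psi_N=-\mathrm{id}_N$ by dualizing. The only difference is packaging: the paper invokes Lemma~\ref{lem:poincarecohom} for the block decomposition and then checks the sign in a basis (writing $\Psi_\Lambda=-\sum_i\lambda_i^*\otimes\lambda_i$ and noting $\Psi_\Lambda(\lambda_i)=-\lambda_i$), whereas you evaluate $E_X((\lambda,\nu^*),-)$ basis-free against the lattice $N\oplus\Lambda^*$ and recover the block-diagonal structure as a byproduct.
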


\begin{proof}
Both $\Psi_{\Lambda}$ and $\Psi_{N^{\ast}}$ correspond to the negative of the respective perfect evaluation pairings $\Lambda^{\ast}\otimes \Lambda\rightarrow \bZ$ and $N^{\ast}\otimes N\rightarrow \bZ$. Thus, if $\{ \lambda_i\}_{1\leq i\leq g}$ is a basis for $\Lambda$ with corresponding dual basis $\{\lambda^{\ast}_i\}_{1\leq i\leq g}$, then $\Psi_\Lambda=-\sum_{i=1}^{g}\lambda_i^{\ast}\otimes \lambda_i$ as an element of $\Lambda^{\ast}\otimes \Lambda$. Then $\Psi_\Lambda(\lambda_i)=-\lambda_i$. So $\Psi_\Lambda$ as an element of $\Hom(\Lambda, \Lambda)$ is the multiplication by $-1$ map. Similarly, $\Psi_{N^*}\in \End(N^*)$ is the multiplication by $-1$ map. Thus, $\Psi_N\in\End(N)$, being the dual of $\Psi_{N^*}$, is also the multiplication by $-1$ map. 
\end{proof}

\begin{remark}\label{rem:cherniso}
    If $A$ is a complex abelian variety with dual $\widehat{A}$, the first Chern class $c_1(\mathcal{P}_A)$ of the Poincar\'e bundle can be interpreted as an isomorphism $c_1(\mathcal{P}_A): H^1(A,\bZ)^{\ast}\rightarrow H^1(\widehat{A}, \bZ)$ (cf. \cite{CAV}*{\S16.4}. The above paragraph can be reinterpreted in a similar way. Indeed for a tropical real torus with integral structure $X=N_{\bR}/\Lambda$, one has $H^{1,0}(X)^{\ast}=N= H^{0,1}(\widehat{X})$ and $H^{0,1}(X)^{\ast}=\Lambda=H^{1,0}(\widehat{X})$. Thus, $c_1(\mathcal{P}_X)\in \operatorname{Aut}(\Lambda)\oplus\operatorname{Aut}(N)$ defines a natural isomorphism:
    \begin{equation}\label{eq:P-iso}
        c_1(\mathcal{P}_X): (H^{1,0}(X)\oplus H^{0,1}(X))^{\ast}\xrightarrow{\sim} H^{0,1}(\widehat{X})\oplus H^{1,0}(\widehat{X})\, .
    \end{equation}
\end{remark}

Let $\rank(\Lambda)=\rank(N)=g$. Given choice of bases $\{\lambda_1,\dots, \lambda_g\}$ of $\Lambda$ and $\{\eta_1,\dots, \eta_g\}$ of $N$, we can write down $c_1(\mathcal{P}_X)\in (N^*\otimes N)\oplus (\Lambda^*\otimes\Lambda)$ explicitly in terms of the bases.

\begin{corollary}\label{cor:c1desc}
    As an element of $(N^{\ast}\otimes N)\oplus (\Lambda^{\ast}\otimes \Lambda)$, we have:
    \[c_1(\mathcal{P}_X)= -\sum_{i=1}^{g}\eta_i^{\ast}\otimes \eta_i-\sum_{i=1}^{g}\lambda_i^{\ast}\otimes \lambda_i\, .\]
\end{corollary}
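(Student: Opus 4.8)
The plan is to feed the chosen bases into the two results already established for $c_1(\mathcal{P}_X)$. By Lemma~\ref{lem:poincarecohom} we know $c_1(\mathcal{P}_X)$ lies in $(N^* \otimes N) \oplus (\Lambda^* \otimes \Lambda)$ and equals the sum of the \emph{negatives} of the two natural evaluation pairings, one on $N \otimes N^*$ and one on $\Lambda \otimes \Lambda^*$; Lemma~\ref{lem:map} moreover identifies the corresponding tensors $\Psi_N$ and $\Psi_\Lambda$ as multiplication by $-1$. So the corollary reduces to writing out each evaluation pairing in coordinates inside $N^* \otimes N$ and $\Lambda^* \otimes \Lambda$, using the fixed bases $\{\eta_i\}$ of $N$ and $\{\lambda_i\}$ of $\Lambda$.

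The computation I would carry out is the standard one: under the identification $(N \otimes N^*)^* \cong N^* \otimes N$, the evaluation pairing $N \otimes N^* \to \mathbb{Z}$ is represented by $\sum_{i=1}^g \eta_i^* \otimes \eta_i$, since pairing this element against a basis vector $\eta_k \otimes \eta_l^*$ returns $\delta_{kl}$, exactly matching the value of evaluation. Equivalently, $\sum_{i=1}^g \eta_i^* \otimes \eta_i$ is the coordinate expression of $\mathrm{id}_N$ under $\End(N) \cong N^* \otimes N$; indeed the proof of Lemma~\ref{lem:map} already records the analogous identity $\Psi_\Lambda = -\sum_{i=1}^g \lambda_i^* \otimes \lambda_i$ for the $\Lambda$-factor. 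Running the same computation on the $N$-factor and taking negatives on both factors then yields $c_1(\mathcal{P}_X) = -\sum_{i=1}^g \eta_i^* \otimes \eta_i - \sum_{i=1}^g \lambda_i^* \otimes \lambda_i$, as claimed.

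There is no genuine obstacle here; the only point requiring care is the bookkeeping of duals. The starting datum on the $N$-side is $\Psi_{N^*} \in \End(N^*) \cong N \otimes N^*$, whereas the target Künneth factor is $N^* \otimes N$. Passing to the dual $\Psi_N \in \End(N) \cong N^* \otimes N$ (as set up just before Lemma~\ref{lem:map}) resolves this, and since $\Psi_N$ is again multiplication by $-1$, its coordinate form $-\sum_{i=1}^g \eta_i^* \otimes \eta_i$ is unambiguous regardless of the transpose convention.
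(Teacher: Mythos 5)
Your proposal is correct and follows essentially the same route as the paper: both arguments reduce the corollary to Lemma~\ref{lem:map} (that $\Psi_\Lambda$ and $\Psi_N$ are multiplication by $-1$) together with the decomposition from Lemma~\ref{lem:poincarecohom}, and then express $-\mathrm{id}$ in coordinates. The only cosmetic difference is that the paper writes $c_1(\mathcal{P}_X)$ with unknown coefficients $c_{ij}, d_{ij}$ and determines them by evaluating the endomorphism on basis vectors, whereas you directly cite the standard coordinate form $\sum_i e_i^\ast \otimes e_i$ of the identity; your remark that the transpose convention for $\Psi_{N^\ast}$ versus $\Psi_N$ is harmless here is exactly the right point of care.
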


\begin{proof}
    As an element of $(N^{\ast}\otimes N)\oplus (\Lambda^{\ast}\otimes \Lambda)$, we have $c_1(\mathcal{P}_X)= \sum d_{ij}\eta^{\ast}_i\otimes \eta^{\ast}_j+\sum c_{ij}\lambda_i^{\ast}\otimes \lambda_j$ for some $c_{ij},\ d_{ij}\in \bZ$ for all $1\leq i,j\leq g$. Then, for any $1\leq k\leq g$, applying $c_1(\mathcal{P}_X)=\Psi_N+\Psi_\Lambda: N\oplus\Lambda\rightarrow N\oplus\Lambda$ to $ 0\oplus\lambda_k \in \Lambda\subset N\oplus\Lambda$ we obtain the following by, using Lemma~\ref{lem:map}:
    \begin{align*}-\lambda_k=\Psi_{\Lambda}(\lambda_k)=c_1(\mathcal{P}_X)(0\oplus \lambda_k)=\sum d_{ij}(\eta^{\ast}_i\otimes \eta_j)(0)+\sum c_{ij}(\lambda_i^{\ast}\otimes \lambda_j)(\lambda_k)\\ = \sum d_{ij}\eta^{\ast}_i(0)\eta_j+\sum c_{ij}\lambda_i^{\ast}(\lambda_k)\lambda_j=\sum_{1\leq j\leq g} c_{kj}\lambda_j\, .\end{align*}
    So $c_{kj}=-\delta_{kj}$ for all $1\leq k, j\leq g$. Similarly, applying $c_1(\mathcal{P}_X)$ to $\eta_k\oplus 0\in N\oplus\Lambda$, we see that $d_{kj}=-\delta_{kj}$ for all $1\leq k,j\leq g$.
\end{proof}

\section{A tropical Fourier--Mukai transform}\label{sec:tropFM}

As before, let $X=N_{\mathbb{R}}/\Lambda$ be a real torus  with integral structure of dimension $g$ and $\widehat{X}=\Lambda^{\ast}_{\mathbb{R}}/N^{\ast}$ be its dual. Let $p_1:X\times \widehat{X}\rightarrow X$ and $p_2:X\times\widehat{X}\rightarrow \widehat{X}$ be the two projection morphisms. Both $p_1$ and $p_2$ are proper morphisms of rational polyhedral spaces, since $X$ and $\widehat{X}$ are compact. In particular, $p_i$ induce pullback and proper pushforwards (via Poincar\`e duality, \cite{GSH}*{Theorem D}) on tropical cohomology (see \cite{GSH}*{\S4.3 and \S4.4}). Let $p_1^{\ast}: H^{p,q}(X)\rightarrow H^{p,q}(X\times \widehat{X})$ be the pullback along the projection $p_1$ and $p_{2\ast}: H^{p,q}(X\times\widehat{X})\rightarrow H^{p,q}(\widehat{X})$ be the proper pushforward along the (proper) projection $p_2$. We will next define the (cohomological) \textit{tropical Fourier--Mukai kernel} as an element of the rational tropical cohomology $H^{\bullet, \bullet}(X\times \widehat{X})_{\mathbb{Q}}\coloneqq \bigoplus_{p,q\geq 0}H^{p,q}(X\times \widehat{X})_{\mathbb{Q}}$, where $H^{p,q}(X\times\widehat{X})_{\mathbb{Q}}\coloneqq H^{p,q}(X\times\widehat{X})\otimes_{\bZ}\mathbb{Q}$.

\begin{definition}\label{Def:cohomtropFMker}
The (cohomological) \textit{tropical Fourier--Mukai kernel} is the class: 
\begin{equation}\label{eq:FMkernel}
    e^{c_1(\mathcal{P}_X)}\coloneqq\sum_{k\geq 0}\frac{1}{k!}c_1(\mathcal{P}_X)^{\wedge k}\in H^{\bullet, \bullet}(X\times \widehat{X})_{\mathbb{Q}} .
\end{equation}
\end{definition}

\begin{definition}\label{def:FMtrans}
 The \textit{Fourier--Mukai transform} on rational tropical cohomology is the following group homomorphism:
 \begin{equation}
    F=F_{E_X}: H^{\bullet, \bullet}(X)_{\mathbb{Q}}\rightarrow H^{\bullet, \bullet}(\widehat{X})_{\mathbb{Q}}, \ \ F_{E_X}(\alpha)=p_{2\ast}(e^{c_1(\mathcal{P}_X)}\cdot p_1^{\ast}\alpha)\, .
\end{equation}
\end{definition}

\subsection{Explicit description of the Fourier--Mukai transform}\label{subsec:FMdesc}

One can explicitly describe the homomorphism $F_{E_X}$ defined in Definition~\ref{def:FMtrans}.

\begin{definition}
For $p,q\geq 0$, define the group homomorphism $\alpha_{p,q}: H^{p,q}(X)\rightarrow H^{g-q,g-p}(\widehat{X})$ as the following composite map:
\begin{equation}\label{eq:alpha}
\alpha_{p,q}: H^{p,q}(X)\xrightarrow{\sim} H^{g-p,g-q}(X)^{\ast}\xrightarrow{\Psi^{\wedge g-q}_{\Lambda}\otimes (\Psi_{N})^{\wedge g-p}} H^{g-q,g-p}(\widehat{X})\, ,
\end{equation}
where the first map is the Poincar\'e duality isomorphism $H^{p,q}(X)\xrightarrow{\sim}H^{g-p,g-q}(X)^{\ast}$ induced by the perfect cup product pairing $H^{p,q}(X)\otimes H^{g-p,g-q}(X)\rightarrow H^{g,g}(X)\cong \bZ$ (see Remark~\ref{rem:poincarefourier}) and the second map is the natural map \eqref{Eqn:indhom} induced by $c_1(\mathcal{P}_X)$ on $H^{g-p,g-q}(X)^{\ast}$.  
\end{definition} 
The following result compares the restriction of the Fourier--Mukai transform $F_{E_X}$ to $H^{p,q}(X)$ with the homomorphism $\alpha_{p,q}$ defined by \eqref{eq:alpha}.

\begin{proposition}\label{prop: fourierdesc}
    $F_{E_X}|_{H^{p,q}(X)}=(-1)^{(2g-(p+q))(2g-(p+q)-1)/2} \alpha_{p,q}$.
\end{proposition}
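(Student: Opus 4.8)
The plan is to unravel both sides of the asserted equality into concrete linear-algebraic expressions on the explicit models of tropical cohomology given in \eqref{Eqn:Toruscohom}, and then match coefficients. The main task is to track how the Fourier--Mukai transform $F_{E_X}(\alpha) = p_{2*}(e^{c_1(\mathcal{P}_X)} \cdot p_1^* \alpha)$ acts in these coordinates, and to compare it with the composite $\alpha_{p,q}$ built from Poincar\'e duality and the induced map \eqref{Eqn:indhom}. Throughout I would fix bases $\{\lambda_1,\dots,\lambda_g\}$ of $\Lambda$ and $\{\eta_1,\dots,\eta_g\}$ of $N$, so that by Corollary~\ref{cor:c1desc} we have the explicit formula $c_1(\mathcal{P}_X) = -\sum_i \eta_i^* \otimes \eta_i - \sum_i \lambda_i^* \otimes \lambda_i$ as an element of $(N^* \otimes N) \oplus (\Lambda^* \otimes \Lambda) \subseteq H^{1,1}(X \times \widehat{X})$.

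First I would compute the wedge powers $c_1(\mathcal{P}_X)^{\wedge k}$ explicitly. Since the two summands of $c_1(\mathcal{P}_X)$ live in distinct K\"unneth factors, expanding $e^{c_1(\mathcal{P}_X)}$ via \eqref{eq:FMkernel} produces, for a fixed multidegree, a binomial-type splitting between the $N^* \otimes N$ part and the $\Lambda^* \otimes \Lambda$ part. The key point is that the pushforward $p_{2*}$ is (via Poincar\'e duality, \cite{GSH}*{Theorem D}) an integration-along-the-fiber $X$, which in the explicit coordinates amounts to contracting against the fundamental class $\operatorname{cyc}[X] \in H_{g,g}(X)$; concretely, a top-degree wedge monomial in the $X$-variables pairs to $\pm 1$ and all lower-degree monomials push forward to zero. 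After pulling back $\alpha \in H^{p,q}(X)$ along $p_1$, multiplying by $e^{c_1(\mathcal{P}_X)}$, and retaining only the terms whose $X$-component is top-degree $(g,g)$, the fiber integral selects exactly the complementary wedge powers of the two pieces of $c_1(\mathcal{P}_X)$, namely $\bigwedge^{g-q}$ of the $\Lambda^*\otimes\Lambda$ part and $\bigwedge^{g-p}$ of the $N^*\otimes N$ part. This is precisely what makes the output land in $H^{g-q,g-p}(\widehat{X})$ and is the structural reason the two maps agree up to scalar.

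Next I would identify the resulting operation with $\alpha_{p,q}$. The Poincar\'e duality isomorphism $H^{p,q}(X) \xrightarrow{\sim} H^{g-p,g-q}(X)^*$ is exactly the adjoint of the perfect cup-product pairing into $H^{g,g}(X) \cong \bZ$, so the fiber-integration step in $F_{E_X}$ is the geometric incarnation of this pairing. The surviving wedge powers of $c_1(\mathcal{P}_X)$ then realize the map \eqref{Eqn:indhom}, i.e. $\bigwedge^{g-q}\Psi_\Lambda \otimes \bigwedge^{g-p}\Psi_N$, acting on the dual $H^{g-p,g-q}(X)^*$. Here Lemma~\ref{lem:map} identifies $\Psi_\Lambda$ and $\Psi_N$ with multiplication by $-1$, which cleanly accounts for some of the sign bookkeeping but is not the whole story.

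The main obstacle will be the sign. Matching $F_{E_X}|_{H^{p,q}(X)}$ with $\alpha_{p,q}$ up to the stated factor $(-1)^{(2g-(p+q))(2g-(p+q)-1)/2}$ requires carefully tracking several sources of signs: the Koszul signs from commuting the $N$-type and $\Lambda$-type wedge factors past each other when separating the kernel into its complementary pieces, the sign incurred when reordering a wedge monomial into the standard top-form to carry out fiber integration, and the signs implicit in the Poincar\'e duality identification and in the interior-product formula for the cap product recorded after \eqref{Eqn:Toruscohom}. I expect the binomial coefficient $\binom{2g-(p+q)}{g-q}$ of terms to collapse correctly, while the delicate part is verifying that the aggregate Koszul sign over the selected term equals the triangular-number exponent $(2g-(p+q))(2g-(p+q)-1)/2$; I would do this by a careful permutation-parity computation on a basis monomial, since this exponent is exactly the parity of the reversal permutation on $2g-(p+q)$ letters, which is the permutation needed to sort the complementary wedge factors into the order dictated by the pushforward and the Poincar\'e duality pairing.
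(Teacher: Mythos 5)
Your proposal is correct and follows essentially the same route as the paper's own proof: fix bases, use Corollary~\ref{cor:c1desc} to expand the kernel $e^{c_1(\mathcal{P}_X)}$ into a product of commuting square-zero factors, observe that $p_{2\ast}$ (projection onto the $H^{g,g}(X)$-component of the K\"unneth decomposition) annihilates all but the complementary-degree terms $\bigwedge^{g-q}$ of the $\Lambda^{\ast}\otimes\Lambda$ part and $\bigwedge^{g-p}$ of the $N^{\ast}\otimes N$ part, and match the result against $\alpha_{p,q}$ via Lemma~\ref{lem:map}. Your identification of the sign $(-1)^{(2g-(p+q))(2g-(p+q)-1)/2}$ as the Koszul parity from separating the $2g-(p+q)$ K\"unneth factors, with the remaining $(-1)^{p+q}$ absorbed by $\Psi_\Lambda=\Psi_N=-\mathrm{id}$, is exactly how the signs decompose in the paper's computation.
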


\begin{proof}
    Fix bases $\{\lambda_1,\dots, \lambda_g\}$ of $\Lambda$ and $\{\eta_1, \dots, \eta_g\}$ of $N$ respectively and let $\{\lambda^{\ast}_1,\dots, \lambda^{\ast}_g\}$ and $\{\eta^{\ast}_1, \dots, \eta^{\ast}_g\}$ be the corresponding dual bases of $\Lambda^{\ast}$ and $N^{\ast}$. We now explicitly describe the pushforward map $p_{2\ast}:H^{\bullet, \bullet}(X\times\dual{X})\rightarrow H^{\bullet,\bullet}(\dual{X})$, the pullback map $p^{\ast}_{1}:H^{\bullet, \bullet}(X)\rightarrow H^{\bullet, \bullet}(X\times \dual{X})$ and the map $\alpha_{p,q}: H^{p,q}(X)\rightarrow H^{g-q,q-p}(\dual{X})$ defined in \eqref{eq:alpha} in terms of the chosen bases of $\Lambda,\ N,\ \Lambda^{\ast}$ and $N^{\ast}$. We introduce some notation which shall be used in this proof and throughout the remaining paper.
 \begin{notation}\label{notation:index}
 For any multi-index $I=i_1<i_2<\dots< i_p$ in $\{1,2,\dots, g\}$, we will denote $\lambda_I\coloneqq\lambda_{i_1}\wedge\dots\wedge\lambda_{i_p}$ (respectively, for $\lambda^{\ast}_I$). We analogously define $\eta_J$ and $\eta^{\ast}_J$ for any multi-index $J$ in $\{1,2,\dots, g\}$. For any multi-index $I=i_1<i_2<\dots <i_p$ in $\{1,2,\dots, g\}$, the multi-index $I^o$ will denote the complementary ordered multi-index defined as $\{1,2,\dots, g\}\setminus I$ with its natural ordering.
\end{notation}
    
    Let $\delta: H^{\bullet, \bullet}(X)=\bigoplus_{p,q\geq 0}H^{p,q}(X)\rightarrow H^{g,g}(X)\cong \bZ$ be the natural projection.  We identify $H^{\bullet, \bullet}(X\times\widehat{X})=H^{\bullet, \bullet}(X)\otimes H^{\bullet, \bullet}(\widehat{X})$ by the K\"unneth decomposition (see \cite{GSH}). Then the pushforward map $p_{2\ast}: H^{\bullet,\bullet}(X\times\widehat{X})\rightarrow H^{\bullet, \bullet}(\widehat{X})$ along the projection $p_{2\ast}:X\times\widehat{X}\rightarrow \widehat{X}$ is defined by $p_{2\ast}(x\otimes y)=\delta(x)y$ on the pure tensors of $H^{\bullet, \bullet}(X\times\widehat{X})=H^{\bullet, \bullet}(X)\otimes H^{\bullet, \bullet}(\widehat{X})$. 
    
   Since $\widehat{X}$ is a closed tropical manifold, it has a fundamental class $[\widehat{X}]$ in $Z_g(\widehat{X})$ (see \cite{GSJ}*{Definition 3.1}, which serves as the unit of the tropical intersection product on the graded ring $Z_{\bullet}(\widehat{X})$ of tropical cycles on $\widehat{X}$. Let $\operatorname{cyc}[\widehat{X}]\in H_{g,g}(\widehat{X})$ denote the cycle class obtained by applying the tropical cycle map to the fundamental class $[\widehat{X}]$ and let $[\widehat{X}]^{\ast}\in H^{0,0}(\widehat{X})$ be the Poincar\`e dual of $\operatorname{cyc}[\widehat{X}]$. We denote $[\widehat{X}]^{\ast}$ by $1\in H^{\bullet,\bullet}(\widehat{X})$, as one can check that it is the identity of the cup product on the tropical cohomology ring. The pullback $p_1^{\ast}:H^{\bullet,\bullet}(X)\rightarrow H^{\bullet,\bullet}(X\times \widehat{X})$ is given by $p_1^{\ast}(x)=x\otimes 1$.
   Note that the pullback is compatible with the ring structure on the cohomology induced by the cup product. 
   
   We now calculate the cohomological Fourier--Mukai kernel $e^{c_1(\mathcal{P}_X)}$ using Corollary~\ref{cor:c1desc} and the description of the cup product in $H^{\bullet,\bullet}(X\times\widehat{X})=\bigwedge^{\bullet}(\Lambda\oplus N^{\ast})^{\ast}\otimes \bigwedge^{\bullet}(N\oplus \Lambda^{\ast})^{\ast}$ as the exterior product:
\begin{align*}
    &e^{-\sum_{i=1}^{g}\lambda_i^{\ast}\otimes \lambda_i-\sum_{i=1}^{g}\eta^{\ast}_i\otimes\eta_i}\\
    &=\prod_{i=1}^{g}(\sum_{\nu\geq 0}\frac{1}{\nu!}\wedge^{\nu}(-\lambda_i^{\ast}\otimes \lambda_i))\wedge \prod_{i=1}^{g}(\sum_{\nu\geq 0}\frac{1}{\nu!}\wedge^{\nu}(-\eta^{\ast}_i\otimes \eta_i))\\
    &=\prod_{i=1}^{g}(1+(-\lambda_i^{\ast}\otimes \lambda_i))\wedge \prod_{i=1}^{g}(1+(-\eta^{\ast}_i\otimes \eta_i))\\
    &=\sum_{r,s=0}^{g}\sum_{\substack{I= (i_1<\dots<i_r)\\J=(j_1<\dots<j_s)}}\prod_{u=1}^{r}(\lambda_{i_u}^{\ast}\otimes (-\lambda_{i_u}))\wedge\prod_{v=1}^{s}(\eta^{\ast}_{j_v}\otimes (-\eta_{j_v}))\\
    &= \sum_{r,s=0}^{g}\sum_{\substack{I= (i_1<\dots<i_r)\\J=(j_1<\dots<j_s)}}(-1)^{(r+s)(r+s-1)/2}(\lambda^{\ast}_I\wedge \eta^{\ast}_J)\otimes((-1)^{r+s}\lambda_I\wedge \eta_J)\, .\\& 
\end{align*}

The third equality above follows from the tensor product of a cohomology class of $X$ and a cohomology class of $\widehat{X}$ in $H^{\bullet,\bullet}(X\times\widehat{X})$ being alternating, by the K\"unneth isomorphism. Thus, for a basis element $\lambda_{I_q}^{\ast}\otimes\eta_{J_p}^\ast$ in $H^{p,q}(X)=\bigwedge^q\Lambda^{\ast}\otimes\bigwedge^pN^{\ast}$, we can calculate its Fourier--Mukai transform as:
\begin{align*}
    &F_{E_X}(\lambda_{I_q}^{\ast}\otimes\eta_{J_p}^\ast)=p_{2\ast}(e^{c_1(\mathcal{P}_X)}\smile p_1^{\ast}(\lambda_{I_q}^{\ast}\otimes\eta_{J_p}^\ast))\\
    &= \sum_{r,s=0}^{g} \sum_{\substack{I= (i_1<\dots<i_r)\\J=(j_1<\dots<j_s)}} \!\!\!\!\!\!\!\!(-1)^{(r+s)(r+s+1)/2}p_{2\ast}(((\lambda^{\ast}_I\wedge \eta^{\ast}_J)\otimes(\lambda_I\wedge \eta_J))\smile ((\lambda_{I_q}^{\ast}\otimes\eta_{J_p}^\ast)\otimes 1_{\widehat{X}}))\\
    &= \sum_{r,s=0}^{g}\sum_{\substack{I= (i_1<\dots<i_r)\\J=(j_1<\dots<j_s)}}(-1)^{(r+s)(r+s+1)/2}p_{2\ast}(((\lambda^{\ast}_I\wedge \lambda_{I_q}^{\ast})\otimes ( \eta_{J}^\ast\wedge\eta^{\ast}_{J_p}))\otimes(\lambda_I\otimes \eta_J))\\
    &= \sum_{r,s=0}^{g}\sum_{\substack{I= (i_1<\dots<i_r)\\J=(j_1<\dots<j_s)}}(-1)^{(r+s)(r+s-1)/2}\delta((\lambda^{\ast}_I\wedge\lambda_{I_q}^{\ast})\otimes(\eta_{J}^\ast\wedge\eta^{\ast}_{J_p}))\ (-1)^{r+s}\lambda_I\otimes \eta_J\\
    &=(-1)^{(2g-(p+q))(2g-(p+q)-1)/2}\delta((\lambda^{\ast}_{I^o_q}\wedge\lambda_{I_q}^{\ast})\otimes(\eta_{J^o_p}^\ast\wedge\eta^{\ast}_{J_p})) \ (-1)^{p+q}\lambda_{I^o_q}\otimes \eta_{J^o_p}\\
    &= (-1)^{(2g-(p+q))(2g-(p+q)-1)/2} \beta_{p,q}(\lambda_{I_q}^{\ast}\otimes\eta_{J_p}^\ast)\, ,
\end{align*}
where $\beta_{p,q}(\lambda_{I_q}^{\ast}\otimes\eta_{J_p}^\ast)\coloneqq \delta((\lambda^{\ast}_{I^o_q}\wedge\lambda_{I_q}^{\ast})\otimes(\eta_{J^o_p}^\ast\wedge\eta^{\ast}_{J_p})) \ (-1)^{p+q}\lambda_{I^o_q}\otimes \eta_{J^o_p}$. The second equality above follows from the definition of intersection product in $H^{\bullet, \bullet}(X\times\widehat{X})$.
   
We now describe $\alpha_{p,q}$ in terms of the chosen bases of $\Lambda,\ N,\ \Lambda^{\ast}$ and $N^{\ast}$. Using Notation~\ref{notation:index} and description \eqref{Eqn:Toruscohom} of the tropical cohomology of real tori with integral structures, the Poincar\'e duality isomorphism $H^{p,q}(X)\xrightarrow{\sim} H^{g-p, g-q}(X)^{\ast}$ is the map $\lambda^{\ast}_{I_q}\otimes \eta^{\ast}_{J_p}\mapsto \delta((\lambda^{\ast}_{I^o_q}\wedge\lambda^{\ast}_{I_q})\otimes (\eta^{\ast}_{J^o_p}\wedge\eta^{\ast}_{J_p}))\lambda_{I_q^o}\otimes \eta_{J_p^o}$. By \eqref{eq:alpha}, we then have:
    \begin{equation}\label{Eq:alphadesc}
        \alpha_{p,q}(\lambda^{\ast}_{I_q}\otimes\eta^{\ast}_{J_p})\coloneqq \delta((\lambda^{\ast}_{I^o_q}\wedge\lambda^{\ast}_{I_q})\otimes (\eta^{\ast}_{J^o_p}\wedge \eta^{\ast}_{J_p}))\ \Psi^{\wedge g-q}_\Lambda(\lambda_{I_q^o})\otimes \Psi^{\wedge g-p}_N(\eta_{J_p^o})\, .
    \end{equation}
 By applying Lemma~\ref{lem:map} to \eqref{Eq:alphadesc}, we finally obtain: 
 $$\alpha_{p,q}(\lambda^{\ast}_{I_q}\otimes\eta^{\ast}_{J_p})=(-1)^{p+q}\delta((\lambda^{\ast}_{I^o_q}\wedge\lambda^{\ast}_{I_q})\otimes (\eta^{\ast}_{J^o_p}\wedge \eta^{\ast}_{J_p}))\lambda_{I_q^o}\otimes\eta_{J_p^o}=\beta_{p,q}(\lambda^{\ast}_{I_q}\otimes\eta^{\ast}_{J_p})\, .$$
 Thus, plugging in $\alpha_{p,q}(\lambda^{\ast}_{I_q}\otimes\eta^{\ast}_{J_p})=\beta_{p,q}(\lambda^{\ast}_{I_q}\otimes\eta^{\ast}_{J_p})$ in the description of $F_{E_X}(\lambda_{I_q}^{\ast}\otimes\eta_{J_p}^\ast)$ obtained above, we obtain the desired identity on basis elements of $H^{p,q}(X)$ and thereby on entire $H^{p,q}(X)$ since $F_{E_X}$ is a homomorphism.
 
\end{proof}

\begin{remark}\label{rem:poincarefourier}
    In \cite{JSS} and \cite{GSH} the authors prove Poincar\'e duality for general tropical manifolds (using Dolbeault cohomology of tropical superforms, and sheaf theoretic methods, respectively). Proposition~\ref{prop: fourierdesc} demonstrates that the Fourier--Mukai transform provides Poincar\'e duality (up to an isomorphism $H^{i,j}(X)^{\ast}\xrightarrow{\sim}H^{j,i}(\widehat{X})$) for tropical cohomology of tropical abelian varieties. As a corollary, we also see that the Fourier--Mukai transform $F_{E_X}:H^{\bullet, \bullet}(X)\rightarrow H^{\bullet, \bullet}(\dual{X})$ is compatible with the bigrading of the tropical cohomology rings.
\end{remark}

\begin{corollary}\label{cor:FMpq}
    $F_{E_X}|_{H^{p,q}(X)}: H^{p,q}(X)\xrightarrow{\sim} H^{g-q,g-p}(\widehat{X})$ is an isomorphism of abelian groups.
\end{corollary}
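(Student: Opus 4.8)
The plan is to derive the statement essentially for free from Proposition~\ref{prop: fourierdesc}. That proposition identifies the restriction $F_{E_X}|_{H^{p,q}(X)}$ with the map $\alpha_{p,q}$ up to multiplication by the scalar $(-1)^{(2g-(p+q))(2g-(p+q)-1)/2}$, which is $\pm 1$ and hence a unit. Multiplication by a unit is an automorphism of abelian groups, so $F_{E_X}|_{H^{p,q}(X)}$ is an isomorphism if and only if $\alpha_{p,q}$ is. Thus the entire task reduces to checking that $\alpha_{p,q}$ is an isomorphism.

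For this I would unwind the definition \eqref{eq:alpha} of $\alpha_{p,q}$ as a composite of two maps and argue that each is invertible. The first map, the Poincaré duality isomorphism $H^{p,q}(X)\xrightarrow{\sim}H^{g-p,g-q}(X)^{\ast}$ induced by the perfect cup product pairing, is an isomorphism by construction; this is precisely Poincaré duality for the closed tropical manifold $X$ (\cite{GSH}*{Theorem D}). For the second map $\Psi^{\wedge g-q}_{\Lambda}\otimes(\Psi_N)^{\wedge g-p}$, I would use the explicit descriptions \eqref{Eqn:Torushom} and \eqref{Eqn:Toruscohom}, under which both its source $H^{g-p,g-q}(X)^{\ast}$ and its target $H^{g-q,g-p}(\widehat{X})$ are canonically $\bigwedge^{g-q}\Lambda\otimes\bigwedge^{g-p}N$. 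By Lemma~\ref{lem:map}, $\Psi_\Lambda$ and $\Psi_N$ are the multiplication-by-$(-1)$ endomorphisms of $\Lambda$ and $N$, so $\Psi^{\wedge g-q}_{\Lambda}$ acts as $(-1)^{g-q}$ on $\bigwedge^{g-q}\Lambda$ and $(\Psi_N)^{\wedge g-p}$ acts as $(-1)^{g-p}$ on $\bigwedge^{g-p}N$. Their tensor product is therefore multiplication by the unit $(-1)^{2g-p-q}=\pm 1$, which is visibly an automorphism.

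Being a composite of two isomorphisms, $\alpha_{p,q}$ is an isomorphism, and hence so is $F_{E_X}|_{H^{p,q}(X)}$. I do not expect a genuine obstacle here: all of the substance has already been absorbed into Proposition~\ref{prop: fourierdesc} (which does the real combinatorial bookkeeping) and Lemma~\ref{lem:map} (which pins down $\Psi_\Lambda$ and $\Psi_N$), so the corollary amounts to the observation that each constituent map of $\alpha_{p,q}$ is invertible and that a $\pm 1$ prefactor does not affect invertibility. The only point requiring minor care is matching the source and target identifications so that the second map is genuinely an endomorphism of $\bigwedge^{g-q}\Lambda\otimes\bigwedge^{g-p}N$ before invoking Lemma~\ref{lem:map}.
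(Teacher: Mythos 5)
Your proposal is correct and follows the same route as the paper: the paper's proof of Corollary~\ref{cor:FMpq} is precisely the one-line reduction to Proposition~\ref{prop: fourierdesc} together with the (implicit) fact that $\alpha_{p,q}$ is an isomorphism, which you justify by factoring $\alpha_{p,q}$ into the Poincar\'e duality isomorphism and the map $\Psi^{\wedge g-q}_{\Lambda}\otimes(\Psi_N)^{\wedge g-p}$, the latter being multiplication by $(-1)^{2g-p-q}$ via Lemma~\ref{lem:map}. Your write-up simply makes explicit the details the paper leaves to the reader, including the identification of $H^{g-p,g-q}(X)^{\ast}$ and $H^{g-q,g-p}(\widehat{X})$ with $\bigwedge^{g-q}\Lambda\otimes\bigwedge^{g-p}N$.
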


\begin{proof}
    This follows from Proposition~\ref{prop: fourierdesc} since $\alpha_{p,q}: H^{p,q}(X)\rightarrow H^{g-q, g-p}(\widehat{X})$ is an isomorphism.
\end{proof}

\begin{remark}
    \textit{A priori}, it is not clear from Definition~\ref{def:FMtrans} that the Fourier--Mukai transform $F_{E_X}$ restricts to a homomorphism between the integral tropical cohomology $H^{\bullet,\bullet}(X)\rightarrow H^{\bullet,\bullet}(\widehat{X})$. But this follows from Corollary~\ref{cor:FMpq}.
\end{remark}

\section{A generalized tropical Poincar\'e formula}\label{sec:GenPoincareformula}

The goal of this section is to prove the following generalized tropical Poincar\'e formula for an ample line bundle on a tropical abelian variety. 

\begin{theorem}\label{propgenpoin}
Let $[D]\in \operatorname{CaCl}(X)$ be an ample tropical Cartier divisor on $X=N_\bR/\Lambda$ with $d=h^0(X,\sL(D))$. Let $c_{[D]}=[D]^{\cdot g-1}/(d(g-1)!)$, where $[D]^{\cdot g-1}$ is the $(g-1)$-fold tropical intersection product of $D$. Then, for $0\leq p\leq g$, the following equality holds in tropical homology:
\begin{equation}\label{eqgenPoin}
\frac{[D]^{\cdot p}}{p!}=d\frac{c_{[D]}^{\star \ g-p}}{(g-p)!} \, ,\end{equation}
\end{theorem}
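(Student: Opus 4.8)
The plan is to deduce this statement from the tropical Fourier--Mukai computation of intersection powers in Theorem~\ref{mainthm}, mirroring the way Beauville derives his generalized Poincar\'e formula from his Fourier computation of $F(L^{\cdot p}/p!)$. Throughout I would fix the line bundle $\sL=\sL(D)$, write $\ell=c_1(\sL)=E\in H^{1,1}(X)$, and move freely between the cohomological and homological pictures. By the commutative diagram~\eqref{Eqn:c1cyc} and the cycle class map, the Weil divisor class $[D]\in H_{g-1,g-1}(X)$ is the cap product of $\ell$ with $\operatorname{cyc}[X]$, and the $p$-fold tropical intersection product $[D]^{\cdot p}$ corresponds to the cup power $\ell^{\wedge p}$. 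Thus both sides of the asserted identity live in the same tropical homology group, the left-hand side being built from the cup (intersection) product and the right-hand side from the Pontryagin product of Definition~\ref{Def:Pontryaginprod}, and it suffices to compare their images under the Fourier--Mukai transform.

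The first key input is the \emph{product-exchange property}: in its homological incarnation (via Poincar\'e duality) the transform $F_{E_X}$ carries the cup product on $X$ to the Pontryagin product on $\widehat X$, and conversely, up to a controlled sign. I would establish this directly from the explicit description in Proposition~\ref{prop: fourierdesc}, which realizes $F_{E_X}$ as Poincar\'e duality followed by the identifications $\Lambda\cong H^{1,0}(\widehat X)$, $N\cong H^{0,1}(\widehat X)$, together with the model formulas $(\alpha\otimes\omega)\smile(\beta\otimes\xi)=(\alpha\wedge\beta)\otimes(\omega\wedge\xi)$ for the cup product and $(\alpha\otimes\omega)\star(\beta\otimes\xi)=(\alpha\wedge\beta)\otimes(\omega\wedge\xi)$ for the Pontryagin product on the relevant exterior algebras from~\eqref{Eqn:Torushom}--\eqref{Eqn:Toruscohom}. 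Since $\Psi_\Lambda$ and $\Psi_N$ are multiplication by $-1$ (Lemma~\ref{lem:map}), the same description simultaneously yields a Fourier inversion formula $F_{\widehat X}\circ F_{E_X}=\pm(-1_X)^{*}$ with an explicit sign.

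Granting these formal properties, I would apply $F_{E_X}$ to the target identity. The right-hand side, a Pontryagin power of $c_{[D]}=\ell^{\wedge(g-1)}/(d(g-1)!)$, is turned by the exchange property into a cup power of $F_{E_X}(c_{[D]})$, and $F_{E_X}(c_{[D]})$ is evaluated using Theorem~\ref{mainthm} in the case $p=g-1$, giving a scalar multiple of $\phi_{\sL*}\ell$; the left-hand side $\ell^{\wedge p}/p!$ is handled by Theorem~\ref{mainthm} directly. Matching the two then requires the projection formula for the isogeny $\phi_\sL$: there is a rational class $\widehat\ell\in H^{1,1}(\widehat X)_{\mathbb Q}$, namely the inverse form $E^{-1}$, with $\phi_\sL^{*}\widehat\ell=\ell$, whence $\phi_{\sL*}(\ell^{\wedge k})=\deg(\phi_\sL)\,\widehat\ell^{\wedge k}$ and in particular $\phi_{\sL*}\ell=\deg(\phi_\sL)\widehat\ell$. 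Finally I would pin down the constant $d$ through the tropical analytic Riemann--Roch (Corollary~\ref{lem:tropanRR}) together with Remark~\ref{rem:isonondegen}, which give $d=h^0(X,\sL)=\det E=\deg\phi_\sL$; these coincidences force all the degree factors to cancel and leave exactly the stated equality.

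The main obstacle is the bookkeeping of these degree and normalization constants, which behaves differently from the classical situation and must be handled with care. Because the tropical first Chern class $E$ is a \emph{symmetric} $g\times g$ form rather than an alternating $2g\times 2g$ Riemann form, one has $\deg\phi_\sL=\det E=d$ tropically, in contrast to the classical $\deg\phi_L=d^2$; fixing the sign in the exchange formula and checking that every power of $d$ cancels is where the genuine work lies. As an independent verification (and a self-contained alternative proof), one can bypass the Fourier formalism and compute both sides directly in a chosen basis, using the explicit cap, cup, and Pontryagin products from~\eqref{Eqn:Torushom}--\eqref{Eqn:Toruscohom}. This reduces the identity to a relation between the minors of $E$ and those of its adjugate, provable by a generalized Cauchy--Binet formula together with Jacobi's determinant identity, and it is precisely this linear algebra that makes the factor $d=\det E$ emerge.
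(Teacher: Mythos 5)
Your overall strategy---apply the injective transform $F_{E_X}$ to both sides of the (Poincar\'e-dualized) identity, convert the Pontryagin power into a cup power by an exchange property, evaluate via Theorem~\ref{mainthm}, and cancel the degree constants---is viable, and it is genuinely different in execution from the paper's proof: the paper instead chooses bases putting $\rho_E$ in Smith normal form (so that $c_1(\sL)=\sum_i a_i\lambda_i^\ast\otimes\eta_i^\ast$ is diagonal), computes $\phi_{\sL\ast}$, $F_{E_X}$ and the Pontryagin powers explicitly in that basis, applies $\phi_{\sL\ast}$ to both sides of the cohomological identity, and concludes from injectivity of $\phi_{\sL\ast}$ on $H^{p,p}(X)$; no general exchange property, inversion formula, or projection formula is ever invoked. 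However, your proposal has a genuine quantitative error exactly at the point you flag as ``where the genuine work lies'': the projection-formula normalization. You assert $\phi_{\sL\ast}(\ell^{\wedge k})=\deg(\phi_\sL)\,\widehat\ell^{\wedge k}=d\,\widehat\ell^{\wedge k}$, where $\ell=c_1(\sL)$ and $\widehat\ell=E^{-1}$. This is false. The tropical pushforward on $H_{g,g}(X)=\bigwedge^g\Lambda\otimes\bigwedge^g N$ acts as $\bigwedge^g(\pm\rho_E)\otimes\bigwedge^g(\pm E)$, so \emph{both} tensor factors contribute a determinant, giving $\phi_{\sL\ast}(1)=(\det E)^2=d^2$; equivalently, the set-theoretic degree of $\phi_\sL$ is indeed $d=[\Lambda(\sL):\Lambda]$ (Remark~\ref{rem:isonondegen}, Corollary~\ref{lem:tropanRR}), but the tropical cycle pushforward of the fundamental cycle carries lattice-index weights, so $\phi_{\sL\ast}\operatorname{cyc}[X]=d^2\operatorname{cyc}[\widehat X]$. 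This is visible in the paper's own formula \eqref{eqnwedge5}: taking the index sets there to be empty gives $\phi_{\sL\ast}(1)=a_{[g]}^2=d^2$. Hence the correct statement is $\phi_{\sL\ast}(\ell^{\wedge k})=d^2\,\widehat\ell^{\wedge k}$, and your closing remark that tropically $\deg\phi_\sL=d$ ``in contrast to the classical $d^2$'' conflates the set-theoretic degree with the homological one: for the projection formula the tropical situation reproduces the classical factor $d^2$.

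This is not a cosmetic slip, because with your factor the argument derives a false identity. Using Theorem~\ref{mainthm} and your normalization one gets $F_{E_X}(\ell^{\wedge p}/p!)=(-1)^{g-p}\widehat\ell^{\wedge(g-p)}/(g-p)!$, while $F_{E_X}(c_{\sL})=-\widehat\ell/d$, so the transform of the right-hand side is $\pm(-1)^{g-p}d^{-(g-p-1)}\widehat\ell^{\wedge(g-p)}/(g-p)!$; the two sides differ by $d^{\,g-p-1}$, so for $d>1$ and $p<g-1$ your computation would refute, not prove, the theorem. With the corrected factor $d^2$ one finds $F_{E_X}(c_\sL)=-\widehat\ell$ and all powers of $d$ cancel, so the derivation does close---provided you also (i) pin down the sign in the exchange property $F_{E_X}(x\star y)=\pm F_{E_X}(x)\smile F_{E_X}(y)$, and (ii) actually prove the exchange property and the projection formula, neither of which appears in the paper; both can be verified from the exterior-algebra models \eqref{Eqn:Torushom}--\eqref{Eqn:Toruscohom} and Proposition~\ref{prop: fourierdesc}, but they are inputs you must supply. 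Your fallback suggestion of a direct basis computation via Cauchy--Binet and Jacobi is essentially how the paper proves Theorem~\ref{mainthm} itself; the paper's proof of the present statement then runs the comparison through $\phi_{\sL\ast}$ in a diagonalizing basis rather than through formal Fourier identities.
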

 
\subsection{Fourier--Mukai action on intersection powers of nondegenerate line bundles}\label{subsec:MainProp}
We first describe the action of the cohomological tropical Fourier--Mukai transform $F_{E_X}:H^{\bullet,\bullet}(X)\rightarrow H^{\bullet,\bullet}(\widehat{X})$ on the first Chern class of intersection powers of nondegenerate tropical line bundles on real tori with integral structures. This is the main technical result that will be useful in proving Theorem~\ref{propgenpoin}. Recall the natural homomorphism of tropical real tori with integral structures $\phi_{\sL}: X\rightarrow \widehat{X}$, defined in \S\ref{subsec:dualrealtori}.

\begin{theorem}\label{mainthm}
    Let $\sL$ be a nondegenerate line bundle on a real torus with integral structure $X=N_\bR/\Lambda$ of dimension $g$. Let the first Chern class $c_1(\sL)$ of $\sL$ be $E\in H^{1,1}(X)=(\Lambda\otimes N)^{\ast}$. Let $F_{E_X}:H^{\bullet, \bullet}(X)\rightarrow H^{\bullet, \bullet}(\widehat{X})$ be the Fourier--Mukai transform on tropical cohomology. Then the following equality holds in $H^{\bullet, \bullet}(\widehat{X})$, for any $0\leq p\leq g$:
    \[ F_{E_X}\left(\frac{c_1(\sL^{\cdot p})}{p!}\right)=\frac{(-1)^{g-p}}{\det E}\phi_{\sL\ast}\left(\frac{c_1(\sL^{\cdot g-p})}{(g-p)!}\right)\,.\]
\end{theorem}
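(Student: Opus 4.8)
The plan is to prove the identity by writing out both sides explicitly in fixed bases $\{\lambda_1,\dots,\lambda_g\}$ of $\Lambda$ and $\{\eta_1,\dots,\eta_g\}$ of $N$, encoding everything in the integer matrix $E_{ij}\coloneqq E(\lambda_i,\eta_j)$ (so that $\det E$ is literally $\det(E_{ij})$), and then comparing the two resulting elements of $H^{g-p,g-p}(\widehat{X})$ coefficient by coefficient. Conceptually, Proposition~\ref{prop: fourierdesc} together with Lemma~\ref{lem:map} tells us that $F_{E_X}$ restricted to $H^{p,p}(X)$ is, up to an explicit global sign, the Poincar\'e duality isomorphism; so the theorem amounts to the assertion that the Poincar\'e dual of $c_1(\sL)^{\wedge p}/p!$ equals $\pm(\det E)^{-1}$ times the pushforward $\phi_{\sL\ast}(c_1(\sL)^{\wedge(g-p)}/(g-p)!)$, a purely linear-algebraic statement about the symmetric form $E$.

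First I would unwind the left-hand side. Since $c_1(\sL)=E=\sum_{i,j}E_{ij}\,\lambda_i^{\ast}\otimes\eta_j^{\ast}$ and the cup product on $H^{\bullet,\bullet}(X)$ is the exterior product (see \eqref{Eqn:Toruscohom}), an antisymmetrization computation gives $c_1(\sL^{\cdot p})/p!=E^{\wedge p}/p!=\sum_{|I|=|J|=p}\det(E_{I,J})\,\lambda_I^{\ast}\otimes\eta_J^{\ast}$, where $E_{I,J}$ is the submatrix of $E$ with rows $I$ and columns $J$. Feeding this into the explicit formula for $F_{E_X}$ from Proposition~\ref{prop: fourierdesc} and equation~\eqref{Eq:alphadesc}, the left-hand side becomes an explicit signed sum of the $p\times p$ minors $\det(E_{I,J})$ placed on the basis vectors indexed by the complementary multi-indices $I^o,J^o$ of size $g-p$.

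Next I would compute the right-hand side. Expanding $c_1(\sL^{\cdot g-p})/(g-p)!=\sum_{|K|=|L|=g-p}\det(E_{K,L})\,\lambda_K^{\ast}\otimes\eta_L^{\ast}$ as above, I must describe the cohomological pushforward $\phi_{\sL\ast}$. By Lemma~\ref{lem:analyticrep}, $\phi_{\sL}$ is induced by the linear isomorphism $\Phi_E(x)=-E(-,x)$, whose restrictions to the relevant lattices are $-\rho_E\colon\Lambda\to N^{\ast}$ and the analogous transpose map $\sigma_E\colon N\to\Lambda^{\ast}$, represented by $E$ and $E^{T}$ respectively. Passing through Poincar\'e duality into the complementary homological degree $p$, the pushforward is governed by the exterior powers $\bigwedge^{p}\rho_E$ and $\bigwedge^{p}\sigma_E$, whose entries are again minors of $E$; here one must pin down the correct normalization, since the degree of $\phi_{\sL}$ is $|\det E|$ and a naive tensor-product formula over-counts by a factor of $\det E$. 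The outcome is that, on each basis vector, the right-hand side is $(\det E)^{-1}$ times a sum of products of complementary minors of $E$.

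The crux of the argument --- and where I expect the real difficulty to lie --- is the final coefficient matching. On each basis vector one must show that the single $p\times p$ minor produced on the left equals the $(\det E)^{-1}$-normalized combination of minors produced on the right. This is exactly the province of two classical determinant identities: the generalized Cauchy--Binet formula, which collapses the sums of products of minors coming from composing the exterior-power maps into minors of a single matrix, and Jacobi's identity for complementary minors, $\det\big((E^{-1})_{I,J}\big)=(-1)^{\sigma(I)+\sigma(J)}\det(E_{J^o,I^o})/\det E$, which trades the $(g-p)\times(g-p)$ minors (scaled by $1/\det E$) for the complementary $p\times p$ minors. The delicate part throughout is the sign bookkeeping: one must reconcile the global sign $(-1)^{(2g-2p)(2g-2p-1)/2}$ from $F_{E_X}$ with the $(-1)^{g-p}$ of the statement and with all the shuffle signs $\sigma(I),\sigma(J)$ introduced by Poincar\'e duality, by $\phi_{\sL\ast}$, and by Jacobi's identity, and verify that they conspire to give precisely the claimed equality.
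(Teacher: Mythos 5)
Your proposal follows essentially the same route as the paper's own proof: expand both sides in fixed bases so that $c_1(\sL^{\cdot p})/p!$ becomes a sum of $p\times p$ minors $\det[E]_{I,J}$ via the Leibniz formula, compute the left side with Proposition~\ref{prop: fourierdesc} and Lemma~\ref{lem:map}, compute the right side by describing $\phi_{\sL\ast}$ through Lemma~\ref{lem:analyticrep} and exterior powers of $\phi_E$, and match coefficients using Jacobi's identity for complementary minors together with the Cauchy--Binet formula, with the sign bookkeeping handled exactly as you anticipate (in the paper it reduces to the identity $\epsilon(I,J)\delta_{I,J}=1$). The only stray remark is your worry that a ``naive tensor-product formula over-counts by a factor of $\det E$'': no normalization of the pushforward is needed --- the homological pushforward really is the plain tensor product of exterior powers, and the factor $\det E$ emerges automatically from Jacobi's identity.
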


\begin{proof}
    Let the first Chern class $c_1(\sL)$ of the nondegenerate line bundle $\sL\in \pic(X)$ be $E\in H^{1,1}(X)=\Lambda^{\ast}\otimes N^{\ast}$. Identifying $\Lambda^{\ast}\otimes N^\ast$ with $\Hom(\Lambda, N^\ast)$, we may view $E$ as a homomorphism $\Lambda\rightarrow N^\ast$ of free $\bZ$-modules. We may identify $E$ with its corresponding $g\times g$ matrix with integer entries, denoted by $[E]$, which is symmetric by Theorem~\ref{thm:tropical-appell-humbert}. Consequently, $E$ determines a symmetric bilinear form on $N_\bR$ such that $E(\Lambda\times N)\subseteq \bZ$.

    Let $\{\lambda_1,\dots, \lambda_g\}$ be a basis for $\Lambda$ and $\{\eta_1,\dots, \eta_g\}$ be a basis for $N$ with the corresponding dual basis of $\Lambda^{\ast}$ being $\{\lambda_1^{\ast}, \dots, \lambda_g^{\ast}\}$ and of $N^\ast$ being $\{\eta_1^{\ast}, \dots, \eta_g^{\ast}\}$. Letting $E_{i,j}\coloneqq E(\lambda_i, \eta_j)$ for all $1\leq i,j\leq g$, we can write $c_1(\sL)=E=\sum_{1\leq i,j\leq g}E_{i,j}\lambda_i^{\ast}\otimes\eta_j^{\ast}\in\Lambda^{\ast}\otimes N^\ast=H^{1,1}(X)$. Let the matrix of $E$ with respect to the above chosen bases of $\Lambda$ and $N^{\ast}$ be $[E]\coloneqq(E_{i,j})_{1\leq i,j\leq g}$. By definition of the intersection product of line bundles:
    \begin{align*}
        & c_1(\sL^{\cdot p})=c_1(\sL)^{\wedge p}= \bigwedge_{k=1}^{p}\left( \sum_{1\leq i_k,j_k\leq g}E_{i_k,j_k}\lambda_{i_k}^{\ast}\otimes\eta_{j_k}^{\ast}\right)\\&= \sum_{I_p, J_p\subseteq [g]}\sum_{\sigma\in \mathfrak{S}_p}\sum_{\tau\in \mathfrak{S}_p}\operatorname{sgn}(\tau)\operatorname{sgn}(\sigma)\prod_{k=1}^{p}E_{i_{\sigma(k)},j_{\tau(k)}}\lambda^{\ast}_{I_p}\otimes \eta^{\ast}_{J_p}\, ,
    \end{align*}
    where the first sum runs over ordered size $p$ subsets $I_p: i_1<i_2<\dots i_p$ and $J_p: j_1<j_2<\dots <j_p$ of $[g]\coloneqq\{1,2,\dots, g\}$. The inner two sums run over $\sigma,\tau\in \operatorname{Aut}\{1,2,\dots, p\}\cong \mathfrak{S}_p$. We can further rewrite the summand above as: 
    \[\operatorname{sgn}(\tau\circ \sigma^{-1})\prod_{k=1}^{p}E_{i_{k},j_{\tau(\sigma^{-1}(k))}}\lambda^{\ast}_{I_p}\otimes \eta^{\ast}_{J_p}\,.\]
    Since $\mathfrak{S}_p$ acts transitively on itself via right multiplication $(\sigma, \tau)\mapsto \tau\circ\sigma^{-1}$, we obtain:
    \[\frac{c_1(\sL^{\cdot p})}{p!}=\sum_{I_p, J_p\subseteq [g]}\sum_{\tau\in \mathfrak{S}_p}\operatorname{sgn}(\tau)\prod_{k=1}^{p}E_{i_{k},j_{\tau(k)}}\lambda^{\ast}_{I_p}\otimes \eta^{\ast}_{J_p}\,. \]
    For any pair of subsets of $A, B\subseteq [g]$, let $[E]_{A,B}$ denote the sub-matrix of $[E]$ formed by the rows indexed by $A$ and columns indexed by $B$. Then, by the Leibniz formula for determinants, we have:
    \begin{equation}\label{detp}
        \frac{c_1(\sL^{\cdot p})}{p!}=\sum_{I_p, J_p\subseteq [g]}\det[E]_{I_p,J_p}\lambda^{\ast}_{I_p}\otimes \eta^{\ast}_{J_p}\, .
    \end{equation}
    Applying the Fourier--Mukai transform to \eqref{detp}, using Proposition~\ref{prop: fourierdesc}, we obtain:
    \begin{align*}\label{detp2}
        &F_{E_X}\left(\frac{c_1(\sL^{\cdot p})}{p!}\right)=\sum_{I_p, J_p\subseteq [g]}\det[E]_{I_p,J_p}F_{E_X}(\lambda^{\ast}_{I_p}\otimes \eta^{\ast}_{J_p})\\
        &=(-1)^{g-p}\sum_{I_p, J_p\subseteq [g]}\det[E]_{I_p,J_p}\alpha_{p,p}(\lambda^{\ast}_{I_p}\otimes \eta^{\ast}_{J_p})\,.
    \end{align*}
    By the definition of the homomorphism $\alpha_{p,p}:H^{p,p}(X)\rightarrow H^{g-p,g-p}(\widehat{X})$ in \eqref{eq:alpha}, we have:
    \begin{equation}\label{eq:LHS}
        F_{E_X}\!\!\left(\frac{c_1(\sL^{\cdot p})}{p!}\right)=(-1)^{g-p}\!\!\!\!\sum_{I_p, J_p\subseteq [g]}\!\!\!\det[E]_{I_p,J_p}\delta((\lambda^{\ast}_{I_p^o}\wedge \lambda^{\ast}_{I_p})\otimes (\eta^{\ast}_{J_p^o}\wedge\eta^{\ast}_{J_p}))\lambda_{I_p^o}\otimes \eta_{J_p^o}\,.
    \end{equation}
    Next, we compute the class of $\phi_{\sL\ast}\left(c_1(\sL^{\cdot g-p})/(g-p)!\right)$. By a similar computation as for \eqref{detp}, we obtain:
    \begin{equation}\label{detg-p}
        \frac{c_1(\sL^{\cdot g-p})}{(g-p)!}=\sum_{I_{g-p}, J_{g-p}\subseteq [g]}\det[E]_{I_{g-p},J_{g-p}}\lambda^{\ast}_{I_{g-p}}\otimes \eta^{\ast}_{J_{g-p}}\,.
    \end{equation}
    Since $X$ and its dual $\widehat{X}$ are compact, the pushforward map $\phi_{\sL\ast}:H^{\ast,\ast}(X)\rightarrow H^{\ast,\ast}(\widehat{X})$ induced by the map $\phi_{\sL}:X\rightarrow \widehat{X}$ is the composition:
    \begin{equation}\label{eqn:pushforward}
    H^{p,q}(X)\xrightarrow{\sim} H_{g-p,g-q}(X)\xrightarrow{\widetilde{\phi}_{\sL\ast}}H_{g-p,g-q}(\widehat{X})\xrightarrow{\sim}H^{p,q}(\widehat{X})\,,
    \end{equation}
    where the first and third maps are the Poincar\'e duality isomorphisms, and the map $\widetilde{\phi}_{\sL\ast}$ is the pushforward map on homology. By Lemma~\ref{lem:analyticrep}, the map $\phi_{\sL}: X\rightarrow \widehat{X}$ is induced by the homomorphism $\phi_{E}:N_\bR\rightarrow \Lambda^{\ast}_\bR$ on the universal cover, defined by $\phi_E(x)=-E(-,x)$. Thus, $\phi_{\sL}$ naturally induces the map $\phi_E:N\rightarrow \Lambda^{\ast}$, given by the matrix $-E$. The fact that the map $\phi_E: N_\bR\rightarrow \Lambda^{\ast}_\bR$ descends to $\phi_{\sL}:X\rightarrow \widehat{X}$ is equivalent to $\phi_E$ inducing a homomorphism $\Lambda\rightarrow N^{\ast}$ given by the matrix $-E^*=-E$ (since $E$ is symmetric). In particular, the induced maps $N\xrightarrow{-E} \Lambda^{\ast}$ and $\Lambda\xrightarrow{-E} N^{\ast}$ are the pushforward maps on homology $\phi_{\sL\ast}: H_{0,1}(X)\rightarrow H_{0,1}(\widehat{X})$ and $\phi_{\sL\ast}: H_{1,0}(X)\rightarrow H_{1,0}(\widehat{X})$ respectively. Since $H_{g-p,g-q}(X)=\bigwedge^{g-q}\Lambda\otimes \bigwedge^{g-p}N$ and $H_{g-p,g-q}(\widehat{X})=\bigwedge^{g-q}N^{\ast}\otimes\bigwedge^{g-p}\Lambda^\ast$, it follows that the pushforward $\widetilde{\phi}_{\sL\ast}: H_{g-p, g-q}(X)\rightarrow H_{g-p,g-q}(\widehat{X})$ is given by:
    \[\widetilde{\phi}_{\sL\ast}: \bigwedge^{g-q} \phi_E\otimes \bigwedge^{g-p}\phi_E: H_{g-p,g-q}(X)\rightarrow H_{g-p,g-q}(\widehat{X})\,. \]
    By \eqref{detg-p}, the class $c_1(\sL^{\cdot g-p}/(g-p)!)$ belongs in $H^{g-p, g-p}(X)$, so we will only focus on the pushforward $\phi_{\sL\ast}$ restricted to $H^{g-p, g-p}(X)$. The Poincar\'e dual of a basis element $\lambda^{\ast}_{I_{g-p}}\otimes \eta^{\ast}_{J_{g-p}}\in H^{g-p,g-p}(X)$ is $\delta((\lambda^{\ast}_{I^o_{g-p}}\wedge \lambda^{\ast}_{I_{g-p}})\otimes(\eta^{\ast}_{J^o_{g-p}}\wedge \eta^{\ast}_{J_{g-p}}))\lambda_{I^o_{g-p}}\otimes \eta_{J^o_{g-p}}\in H_{p,p}(X)$. Thus, the image of $\lambda^{\ast}_{I_{g-p}}\otimes \eta^{\ast}_{J_{g-p}}$ under the composition of the first two maps of \eqref{eqn:pushforward} is:
    \begin{equation}\label{eqn:im2map}
        \lambda^{\ast}_{I_{g-p}}\otimes \eta^{\ast}_{J_{g-p}}\longmapsto \delta((\lambda^{\ast}_{I^o_{g-p}}\wedge \lambda^{\ast}_{I_{g-p}})\otimes(\eta^{\ast}_{J^o_{g-p}}\wedge \eta^{\ast}_{J_{g-p}}))\!\!\bigwedge_{i\in I^o_{g-p}}\!\!(-E\lambda_i)\otimes\!\! \bigwedge_{j\in J^o_{g-p}}\!\!(-E\eta_j)\,.
    \end{equation}
    Now note that $E\lambda_i=\sum_{j=1}^{g}E_{i,j}\eta^{\ast}_j$ and similarly $E\eta_j= \sum_{i=1}^{g}E_{i,j}\lambda^{\ast}_i$. Thus, $\bigwedge_{i_k\in I^o_{g-p}}E\lambda_{i_k}$ equals:
    \begin{equation}\label{eqnwedge1}
        \bigwedge_{i_k\in I^o_{g-p}}(\sum_{j_k=1}^{g}E_{i_k,j_k}\eta^{\ast}_{j_k})=\sum_{\mathcal{J}_p\subseteq[g]}\sum_{\sigma\in\mathfrak{S}_p}\operatorname{sgn}(\sigma)\!\!\prod_{\substack{k=1\\ i_k\in I^o_{g-p} }}^{p}E_{i_k,j_{\sigma(k)}}\eta^{\ast}_{\mathcal{J}_p}=\sum_{\mathcal{J}_p\subseteq[g]}\det[E]_{I^o_{g-p}, \mathcal{J}_p}\eta^{\ast}_{\mathcal{J}_p}\,,
    \end{equation}
    where the outer sum runs over size $p$ ordered subsets $\mathcal{J}_p: j_1<j_2<\dots<j_p$ of $[g]$, and the last equality follows from Leibniz formula for determinants. Similarly, we obtain:
    \begin{equation}\label{eqnwedge2}
        \bigwedge_{j_k\in J^o_{g-p}}E\eta_{j_k}=\sum_{\mathcal{I}_p\subseteq[g]}\det[E]_{\mathcal{I}_p, J^o_{g-p}}\lambda^{\ast}_{\mathcal{I}_p}\,,
    \end{equation}
    where the sum runs over size $p$ ordered subsets $\mathcal{I}_p: i_1<i_2<\dots<i_p$ of $[g]$. Thus, the image in \eqref{eqn:im2map} is the following element of $H_{g-p,g-p}(\widehat{X})$:
    \begin{equation}\label{eqnwedge3}
         \delta((\lambda^{\ast}_{I^o_{g-p}}\wedge \lambda^{\ast}_{I_{g-p}})\otimes(\eta^{\ast}_{J^o_{g-p}}\wedge \eta^{\ast}_{J_{g-p}}))\sum_{\mathcal{I}_p, \mathcal{J}_p\subseteq[g]}\det[E]_{I^o_{g-p}, \mathcal{J}_p}\det[E]_{\mathcal{I}_p, J^o_{g-p}}\eta^{\ast}_{\mathcal{J}_p}\otimes \lambda^{\ast}_{\mathcal{I}_p}\,.
    \end{equation}
    For brevity, let $\delta_{I,J}\coloneqq \delta((\lambda^{\ast}_{I^o}\wedge \lambda^{\ast}_{I})\otimes(\eta^{\ast}_{J^o}\wedge \eta^{\ast}_{J}))\in\bZ$ for any pair of ordered subsets $I,J\subseteq [g]$. Then taking the Poincar\'e dual of \eqref{eqnwedge3}, we have:
    \begin{equation}\label{eqnwedge4}
        \phi_{\sL\ast}( \lambda^{\ast}_{I_{g-p}}\otimes \eta^{\ast}_{J_{g-p}})=\delta_{I_{g-p}, J_{g-p}}\!\!\!\!\sum_{\mathcal{I}_p, \mathcal{J}_p\subseteq[g]}\!\!\!\!\!\det[E]_{I^o_{g-p}, \mathcal{J}_p}\det[E]_{\mathcal{I}_p, J^o_{g-p}}\delta_{\mathcal{I}_p, \mathcal{J}_p} \ \lambda_{\mathcal{I}^o_p}\!\otimes\!\eta_{\mathcal{J}^o_p}\,.
    \end{equation}

    We apply the pushforward on cohomology $\phi_{\sL\ast}$ to \eqref{detg-p}, using \eqref{eqnwedge4}, to obtain:
    \begin{equation}\label{EqpushA}
    \begin{split}
       &\phi_{\sL\ast}\left(\frac{c_1(\sL^{\cdot g-p})}{(g-p)!}\right)\\&=
       \sum_{\substack{\mathcal{I}_p\subseteq [g]\\ \mathcal{J}_p\subseteq[g]}}\sum_{\substack{I_{g-p}\subseteq [g]\\ J_{g-p}\subseteq [g]}} \!\!\!\!\! \delta_{I_{g-p}, J_{g-p}}\delta_{\mathcal{I}_p, \mathcal{J}_p}\det[E]_{\mathcal{I}_p, J^o_{g-p}}\!\det[E]_{I_{g-p},J_{g-p}}\det[E]_{I^o_{g-p}, \mathcal{J}_p}\lambda_{\mathcal{I}^o_p}\otimes\eta_{\mathcal{J}^o_p}\, .
    \end{split}
    \end{equation}
    By Jacobi's identity \cite[Equation~(0.8.4.1)]{Horn} for the non-singular matrix $[E]$, we have:
    \begin{equation}\label{EqnJacobi}
    \det[E]_{I_{g-p}, J_{g-p}}=\epsilon(I_{g-p}, J_{g-p})\det[E]\det[E]^{-1}_{J^o_{g-p}, I^o_{g-p}}\,,
    \end{equation}
    where $\epsilon_{I_{g-p}, J_{g-p}}=(-1)^{\sum I_{g-p}+\sum J_{g-p}}$, using the notation $\sum I_{g-p}\coloneqq \sum_{i\in I_{g-p}}i$ (and similarly $\sum J_{g-p}$). Now for any subset $K\subseteq [g]$ such that $k=|K|$, let $\omega_K\in \mathfrak{S}_g$ be the unique permutation for which $\omega_K(1), \omega_K(2),\dots, \omega_K(k)$ are the elements of $K$ in increasing order and $\omega_K(k+1),\dots, \omega_K(g)$ are the elements of $K^o\coloneqq[g]\setminus K$ in increasing order. Then note that $\lambda^{\ast}_K\otimes \lambda^{\ast}_{K^o}=\operatorname{sgn}(\omega_K)\lambda^{\ast}_{[g]}$. One can check that $\operatorname{sgn}(\omega_K)=(-1)^{\sum K-(1+2+\dots+|K|)}$.
    Since, $\lambda^{\ast}_{I^o_{g-p}}\wedge \lambda^{\ast}_{I_{g-p}}=\operatorname{sgn}(\omega_{I^o_{g-p}})\lambda^{\ast}_{[g]}$ and, similarly,  $\eta^{\ast}_{J^o_{g-p}}\wedge \eta^{\ast}_{J_{g-p}}=\operatorname{sgn}(\omega_{J^o_{g-p}})\eta^{\ast}_{[g]}$, we have:
    \[\delta_{I_{g-p}, J_{g-p}}=\operatorname{sgn}(\omega_{I^o_{g-p}})\operatorname{sgn}(\omega_{J^o_{g-p}})\delta(\lambda^{\ast}_{[g]}\otimes \eta^{\ast}_{[g]})=(-1)^{\sum I^o_{g-p}+\sum J^o_{g-p}+2(1+2+\dots+p)}\,.\]
    Therefore, 
    \begin{equation}\label{Eqn:justnow}
        \epsilon(I_{g-p}, J_{g-p})\delta_{I_{g-p}, J_{g-p}}=(-1)^{\sum I_{g-p}+\sum J_{g-p}+\sum I^o_{g-p}+\sum J^o_{g-p}}=(-1)^{2\sum [g]}=1\,.
    \end{equation} Putting together \eqref{EqpushA}, \eqref{EqnJacobi}, and \eqref{Eqn:justnow}, we get: 
    \begin{align*}
        &\phi_{\sL\ast}\left(\frac{c_1(\sL^{\cdot g-p})}{(g-p)!}\right)\\&=\det[E] \!\! \sum_{\mathcal{I}_p, \mathcal{J}_p\subseteq[g]}\delta_{\mathcal{I}_p, \mathcal{J}_p} \!\!\!\! \sum_{I_{g-p}, J_{g-p}\subseteq [g]} \!\!\!\! \det[E]_{\mathcal{I}_p, J^o_{g-p}}\det[E]^{-1}_{J^o_{g-p},I^o_{g-p}}\det[E]_{I^o_{g-p}, \mathcal{J}_p}\ \lambda_{\mathcal{I}^o_p}\otimes\eta_{\mathcal{J}^o_p}\\&= \det[E]\sum_{\mathcal{I}_p, \mathcal{J}_p\subseteq[g]}\delta_{\mathcal{I}_p, \mathcal{J}_p}\sum_{I_{g-p}\subseteq [g]}\det[I_g]_{\mathcal{I}_p,I^o_{g-p}}\det[E]_{I^o_{g-p}, \mathcal{J}_p}\ \lambda_{\mathcal{I}^o_p}\otimes\eta_{\mathcal{J}^o_p}\\&= \det[E]\sum_{\mathcal{I}_p, \mathcal{J}_p\subseteq[g]}\delta_{\mathcal{I}_p, \mathcal{J}_p}\det[E]_{\mathcal{I}_p, \mathcal{J}_p}\ \lambda_{\mathcal{I}^o_p}\otimes\eta_{\mathcal{J}^o_p}\\
        &= (-1)^{g-p}\det[E] F_{E_X}\left(\frac{c_1(\sL^{\cdot p})}{p!}\right)\,.
    \end{align*}
    The second equality above follows from $\sum_{J_{g-p}\subseteq[g]}\det[E]_{\mathcal{I}_p, J^o_{g-p}}\det[E]^{-1}_{J^o_{g-p},I^o_{g-p}}=\det[I_g]_{\mathcal{I}_p,I^o_{g-p}}$, obtained via Cauchy--Binet formula for minors \cite[\S 0.8.7]{Horn}. A similar application of the Cauchy--Binet formula gives the third equality above as well. The last equality follows from \eqref{eq:LHS}. If $\sL\in \pic(X)$ is a nondegenerate line bundle, then $\det[E]\neq 0$, where $E=c_1(\sL)$. Thus, we obtain the desired identity $F_{E_X}\left(\frac{c_1(\sL^{\cdot p})}{p!}\right)= \frac{(-1)^{g-p}}{\det E}\phi_{\sL\ast}\left(\frac{c_1(\sL^{\cdot g-p})}{(g-p)!}\right)$.
\end{proof}

\subsection{Analytic Riemann--Roch for ample tropical line bundles}\label{subsec:AlgRR}In this section we note a tropical analogue of the analytic Riemann--Roch theorem for ample line bundles on abelian varieties. Some of these results are implicitly present in Sumi's work \cite{KS} on Riemann--Roch inequality for tropical abelian varieties. 

The space $\Gamma(X,\sL)$ of global sections of a tropical line bundle $\sL\in \pic(X)$ forms a convex polyhedron by \cite{KS}*{Theorem 36}. Sumi (\cite[Theorem 3]{KS}) also proves that the dimension $h^0(X,\sL)$ of the polyhedron $\Gamma(X, \sL)$ agrees with the definition of $h^0$ proposed in \cite{DC}. 

\begin{lemma}\label{rrlemma}
    Let $\sL$ be an ample line bundle on $X=N_{\bR}/\Lambda$ with first Chern class $c_1(\sL)=E$. Then $h^0(X, \sL)=\det E$. 
\end{lemma}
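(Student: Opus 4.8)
The plan is to prove $h^0(X,\sL)=\det E$ by showing that \emph{both} sides equal the index $[\Lambda(\sL):\Lambda]$ of the finite-index overlattice $\Lambda\subseteq\Lambda(\sL)$ attached to $\sL$ in Remark~\ref{rem:isonondegen}. Thus I would split the argument into an elementary lattice computation $\det E=[\Lambda(\sL):\Lambda]=\#K(\sL)$, and the genuinely analytic input $h^0(X,\sL)=\#K(\sL)$ extracted from Sumi's description of the section polyhedron. Since $\sL$ is ample, $E$ is positive-definite, so by Remark~\ref{rem:isonondegen}(ii) $\Lambda\subseteq\Lambda(\sL)$ is indeed of finite index and the quotient $K(\sL)=\Lambda(\sL)/\Lambda$ is finite.

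For the lattice step, fix bases $\{\lambda_i\}$ of $\Lambda$ and $\{\eta_j\}$ of $N$, and let $[E]=(E(\lambda_i,\eta_j))$ be the integral symmetric matrix representing $E\colon\Lambda\to N^{\ast}$ (of positive determinant in suitably oriented bases, by positive-definiteness). The map $E$ identifies $\Lambda$ with the sublattice $E(\Lambda)\subseteq N^{\ast}$ of index $|\det[E]|$. On the other hand $\Lambda(\sL)=\Phi_E^{-1}(N^{\ast})$ from Remark~\ref{rem:isonondegen}(i), and since $\Phi_E\colon N_{\bR}\to\Lambda_{\bR}^{\ast}=N^{\ast}_{\bR}$ is an isomorphism over $\bR$ (as $E$ is nondegenerate) one has $\Phi_E(\Lambda(\sL))=N^{\ast}$. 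As $\Phi_E$ is injective, chasing indices gives $[\Lambda(\sL):\Lambda]=[N^{\ast}:\Phi_E(\Lambda)]=[N^{\ast}:E(\Lambda)]=\det E$. Equivalently, passing to a single $N$-basis with change-of-basis matrix $P$ one writes $[E]=P^{\top}G$ for the Gram matrix $G$ and computes $[\Lambda(\sL):\Lambda]$ directly from covolumes; this is routine linear algebra.

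The substantive step is to identify $h^0(X,\sL)=\dim\Gamma(X,\sL)$ with $\#K(\sL)$. Here I would invoke the explicit description of the convex polyhedron $\Gamma(X,\sL)$ from \cite{KS}: pulled back to the universal cover $N_{\bR}$, every global section is a tropical theta function, a piecewise-affine function of the shape $x\mapsto\max_{\lambda\in\Lambda}\bigl(\ell_\lambda(x)+b_\lambda\bigr)$ whose affine pieces $\ell_\lambda$ are dictated by the factor of automorphy $a_{E,l}$ of Theorem~\ref{thm:tropical-appell-humbert}. The $\Lambda$-equivariance of a section forces the coefficient data $\{b_\lambda\}$ to be governed by the finitely many cosets of $\Lambda$ in $\Lambda(\sL)$, so that the extremal generators of this tropical semimodule are naturally indexed by $K(\sL)=\Lambda(\sL)/\Lambda$. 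One then checks these generating theta functions contribute independent directions, so that $\dim\Gamma(X,\sL)=\#K(\sL)$; combined with the previous paragraph this yields $h^0(X,\sL)=\#K(\sL)=\det E$.

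The main obstacle is precisely this last identification: translating Sumi's combinatorial description of $\Gamma(X,\sL)$ into the clean equality $\dim\Gamma(X,\sL)=\#K(\sL)$. One must verify that the theta functions indexed by $K(\sL)$ genuinely span a polyhedron of the expected dimension (neither degenerating nor exceeding $\#K(\sL)$), and that ampleness guarantees the maxima are attained so that the construction produces honest bounded sections. Once the dimension count is pinned down, the determinant formula follows from the elementary index computation. As a sanity check to be verified en route, in the rank-one case $X=\bR/d\bZ$ with $E(x,y)=exy$ one finds $\det E=ed=[\tfrac1e\bZ:d\bZ]=\#K(\sL)$, recovering the classical equality $h^0=\deg$ on a tropical elliptic curve.
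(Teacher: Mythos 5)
Your skeleton is in substance the paper's: both sides of the identity get matched to the order of a finite group attached to $E$. Your $K(\sL)=\Lambda(\sL)/\Lambda$ is carried isomorphically by $\Phi_E$ onto $N^{\ast}/E(\Lambda)=\operatorname{coker}(\rho_E\colon\Lambda\to N^{\ast})$, and your index computation $\det E=[\Lambda(\sL):\Lambda]$ is exactly the elementary half of the paper's argument (the same chase appears there in the proof of Corollary~\ref{lem:tropanRR}, where it shows $\deg\phi_{\sL}=\det E$). The rank-one sanity check is also correct.

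The problem is the step you yourself call ``the main obstacle'': the identification $h^0(X,\sL)=\#K(\sL)$. As written it is a plan, not a proof --- you never show that the theta functions indexed by cosets of $\Lambda$ in $\Lambda(\sL)$ are extremal generators of $\Gamma(X,\sL)$, that they contribute independent directions, or that the polyhedron has no further ones; and this is where all the analytic content of the lemma lives, so the proposal has a genuine gap there. What closes it, however, is not new work but a stronger reading of the reference you are already using: \cite{KS}*{Theorem~36} does not merely say that $\Gamma(X,\sL)$ is a convex polyhedron --- it computes its dimension, giving $h^0(X,\sL)=|\operatorname{coker}\rho_E|$ for positive-definite $\sL$. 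That citation is the entire ``analytic'' step of the paper's proof, which then concludes by observing that positive-definiteness gives $\det E=|\operatorname{coker}\rho_E|$, i.e., your lattice step. So either invoke Sumi's theorem in this form, after which your argument collapses to the paper's (with the cosmetic detour through $K(\sL)$ instead of $\operatorname{coker}\rho_E$), or accept that completing your plan means reproving that theorem --- carrying out the semimodule generation and dimension count for tropical theta functions --- which is substantially more work than everything else in your proposal combined.
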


\begin{proof}
  A line bundle $\sL\in \pic(X)$ is ample if and only if its first Chern class $E\coloneqq c_1(\sL)$ is a positive-definite bilinear form on $N_\bR$. In particular, note that $E\in \Lambda^{\ast}\otimes N^{\ast}$, whereby we can consider $E$ as a homomorphism $\rho_E: \Lambda\rightarrow N^{\ast}$. Then by \cite{KS}*{Theorem 36}, $h^0(X, \sL)=|\operatorname{coker}\rho_E|$. Since $E$ is positive-definite, one sees that $\det E=|\operatorname{coker} \rho_E|$. 
\end{proof}

As a consequence, we obtain the tropical analogue of the analytic Riemann--Roch for abelian varieties (see \cite[\S16]{MumAV}, \cite[Corollary~3.6.2]{CAV}). Recall $\deg\phi_{\sL}$ as defined in Remark~\ref{rem:isonondegen} (ii).

\begin{corollary}\label{lem:tropanRR}
    Let $\sL\in\pic(X)$ be an ample tropical line bundle on $X=N_\bR/\Lambda$ with first Chern class $c_1(\sL)=E$. Then $\deg\phi_{\sL}=\det E$.
\end{corollary}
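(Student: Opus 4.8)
The plan is to reduce the statement to the computation already carried out inside the proof of Lemma~\ref{rrlemma}, namely that $\det E=|\operatorname{coker}\rho_E|$ for the lattice homomorphism $\rho_E\colon\Lambda\to N^{\ast}$, $\rho_E(\lambda)=E(\lambda,-)$. By Remark~\ref{rem:isonondegen}(ii) we have, by definition, $\deg\phi_{\sL}=[\Lambda(\sL):\Lambda]$, where $\Lambda(\sL)=\Phi_E^{-1}(N^{\ast})$ and $\Phi_E\colon N_{\bR}\to\Lambda_{\bR}^{\ast}$ is the linear map $x\mapsto -E(-,x)$ from Lemma~\ref{lem:analyticrep}. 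So it suffices to prove the purely lattice-theoretic identity $[\Lambda(\sL):\Lambda]=|\operatorname{coker}\rho_E|$, after which Lemma~\ref{rrlemma} finishes the job.

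First I would record that, because $E$ is nondegenerate (indeed positive-definite), $\Phi_E$ is an $\bR$-linear isomorphism $N_{\bR}\xrightarrow{\sim}\Lambda_{\bR}^{\ast}=N_{\bR}^{\ast}$, using the identification $\Lambda_{\bR}=N_{\bR}$. Both $\Lambda\subseteq\Lambda(\sL)$ are full-rank lattices (again by Remark~\ref{rem:isonondegen}(ii)), and an injective linear map induces an isomorphism $\Lambda(\sL)/\Lambda\xrightarrow{\sim}\Phi_E(\Lambda(\sL))/\Phi_E(\Lambda)$, so it preserves indices:
\[ [\Lambda(\sL):\Lambda]=[\Phi_E(\Lambda(\sL)):\Phi_E(\Lambda)]. \]
Next I would identify the two images. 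Since $\Phi_E$ is surjective, $\Phi_E(\Lambda(\sL))=\Phi_E(\Phi_E^{-1}(N^{\ast}))=N^{\ast}$. For $\Phi_E(\Lambda)$ I use that $E$ is symmetric: for $\lambda\in\Lambda$ one has $\Phi_E(\lambda)=-E(-,\lambda)=-E(\lambda,-)=-\rho_E(\lambda)$, so $\Phi_E(\Lambda)=\rho_E(\Lambda)$ as subgroups of $N^{\ast}$ (the sign is irrelevant for the image). Hence
\[ [\Lambda(\sL):\Lambda]=[N^{\ast}:\rho_E(\Lambda)]=|\operatorname{coker}\rho_E|. \]

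Finally I would invoke the identity $|\operatorname{coker}\rho_E|=\det E$ established in the proof of Lemma~\ref{rrlemma}, valid because $E$ is positive-definite so that $\det[E]>0$, and conclude $\deg\phi_{\sL}=\det E$. I do not expect a serious obstacle: the entire argument is the transport of a lattice index through the isomorphism $\Phi_E$. The only points demanding care are the identification $N_{\bR}=\Lambda_{\bR}$ that lets $\Phi_E$ be viewed as a self-map of $N_{\bR}^{\ast}$, the use of the symmetry of $E$ to match $\Phi_E|_{\Lambda}$ with $-\rho_E$, and the elementary fact that an $\bR$-linear isomorphism sends the index $[\Lambda(\sL):\Lambda]$ to the index of the image lattices.
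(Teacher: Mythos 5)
Your proposal is correct and follows essentially the same route as the paper: the paper also transports the index $[\Lambda(\sL):\Lambda]$ through the $\bR$-linear isomorphism $\overline{\rho}_E=-\Phi_E$ (using symmetry of $E$ to identify $\overline{\rho}_E|_\Lambda$ with $\rho_E$), identifies $\overline{\rho}_E(\Lambda(\sL))=N^{\ast}$, and concludes $\deg\phi_\sL=|\operatorname{coker}\rho_E|=\det E$ via Lemma~\ref{rrlemma}. The only cosmetic difference is that the paper organizes this as a factorization $\Lambda\hookrightarrow\Lambda(\sL)\xrightarrow{\overline{\rho}_E}N^{\ast}$ of $\rho_E$, while you phrase it as index-preservation under an injective linear map; the content is identical.
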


\begin{proof}
    Let $\rho_E:\Lambda\rightarrow N^*$ be the lattice homomorphism induced by $c_1(\sL)=E$. Since $\sL$ is ample, $\rho_E$ is injective. Let $\overline{\rho}_E:N_\bR\rightarrow N_\bR^*$ be the map $x\mapsto E(x,-)$, which is an isomorphism since $E$ is positive-definite. Then $\overline{\rho}_E|_\Lambda=\rho_E$ and $\overline{\rho}_E^{-1}(N^*)=\Lambda(\sL)$ as defined in \S\ref{subsec:dualrealtori}. This is obtained by naturally identifying $N^*_\bR=\Lambda^*_\bR$ and $\overline{\rho}_E=-\Phi_E$ (as given in Lemma~\ref{lem:analyticrep}). We have the following factorization:
    \[\begin{tikzcd}
	\Lambda && {N^*} \\
	& {\Lambda(\sL)}
	\arrow["{\rho_E}", hook, from=1-1, to=1-3]
	\arrow[hook, from=1-1, to=2-2]
	\arrow["{\overline{\rho}_E}"', from=2-2, to=1-3]
\end{tikzcd}\]
Now $\overline{\rho}_E:\Lambda(\sL)\xrightarrow{\sim} N^*$ is an isomorphism since $\overline{\rho}_E:N_\bR\rightarrow N^*_\bR$ is an isomorphism. Thus, $\deg\phi_\sL\coloneqq[\Lambda(\sL):\Lambda]=|\operatorname{coker}\rho_E|=h^0(X,\sL)=\det E$ by Lemma~\ref{rrlemma}.
\end{proof}

As an immediate corollary of Theorem~\ref{mainthm} and Lemma~\ref{rrlemma}, we obtain the following tropical analogue of \cite[Proposition 5]{Beauv2} (also see \cite[Theorem 16.5.]{CAV}) at the level of cohomology.

\begin{corollary}\label{maincor}
    Let $\sL$ be an ample line bundle on a real torus with integral structure $X=N_\bR/\Lambda$  of dimension $g$ and let $d=h^0(X, \sL)$. Let $F_{E_X}:H^{\bullet, \bullet}(X)\rightarrow H^{\bullet, \bullet}(\widehat{X})$ be the Fourier--Mukai transform on tropical cohomology and let $\phi_{\sL}: X\rightarrow \widehat{X}$ be the natural morphism of tropical real tori with integral structures as defined earlier. Then the following equality holds in $H^{\bullet, \bullet}(\widehat{X})$, for any $0\leq p\leq g$:
    \[ F_{E_X}\left(\frac{c_1(\sL^{\cdot p})}{p!}\right)=\frac{(-1)^{g-p}}{d}\phi_{\sL\ast}\left(\frac{c_1(\sL^{\cdot g-p})}{(g-p)!}\right)\,.\]
\end{corollary}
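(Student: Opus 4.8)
The plan is to deduce this directly from Theorem~\ref{mainthm} and Lemma~\ref{rrlemma}, since the substantive intersection-theoretic computation has already been carried out in the proof of Theorem~\ref{mainthm}. The corollary is just its specialization to the ample case, combined with the geometric interpretation of $\det E$.

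First I would check that the hypotheses of Theorem~\ref{mainthm} are met. By Definition~\ref{def:tropample}, ampleness of $\sL$ means its first Chern class $E = c_1(\sL)$ is positive-definite as a bilinear form on $N_\bR$; in particular $E$ is nondegenerate, so $\sL$ is a nondegenerate line bundle in the sense required by Theorem~\ref{mainthm}, and $\det E \neq 0$. Applying Theorem~\ref{mainthm} then yields
\[ F_{E_X}\left(\frac{c_1(\sL^{\cdot p})}{p!}\right)=\frac{(-1)^{g-p}}{\det E}\phi_{\sL\ast}\left(\frac{c_1(\sL^{\cdot g-p})}{(g-p)!}\right) \]
in $H^{\bullet,\bullet}(\widehat{X})$, for each $0 \leq p \leq g$. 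Second, I would replace the denominator $\det E$ by $d$: Lemma~\ref{rrlemma} gives $d = h^0(X,\sL) = \det E$ (and $\det E > 0$ since $E$ is positive-definite, so the division is legitimate), and substituting $d$ for $\det E$ produces exactly the claimed identity.

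There is no genuine obstacle here, as the entire content lives in Theorem~\ref{mainthm}. The only points needing a moment's care are the implication \emph{ample} $\Rightarrow$ \emph{nondegenerate}, which is what licenses the appeal to Theorem~\ref{mainthm}, and the positivity of $\det E$, which guarantees the right-hand side is well-defined; both are immediate consequences of positive-definiteness of $E$.
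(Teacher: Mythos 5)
Your proposal is correct and matches the paper exactly: the paper presents Corollary~\ref{maincor} as an immediate consequence of Theorem~\ref{mainthm} together with Lemma~\ref{rrlemma}, which is precisely your argument (ample $\Rightarrow$ positive-definite $\Rightarrow$ nondegenerate, then substitute $d = \det E$). Your added remarks on why the appeal to Theorem~\ref{mainthm} is licensed and why the division by $\det E$ is legitimate are sound and only make explicit what the paper leaves implicit.
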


\subsection{Proof of Theorem~\ref{propgenpoin}} Our strategy will be to prove the Poincar\'e dual of the desired statement in tropical cohomology. To do so, we first need to understand the Poincar\'e dual of tropical Cartier divisors and the Poincar\'e dual of the homological Pontryagin product in tropical cohomology.

\begin{remark}\label{rem:divdual}
    Let $X$ be a compact tropical manifold of dimension $g$. Let $\operatorname{cyc}:\operatorname{CDiv}(X)\rightarrow H_{g-1, g-1}(X)$ be the cycle class map obtained by the natural identification $\operatorname{CDiv}(X)\cong Z_{g-1}(X)$ . By \cite{GSJ}*{Theorem 3.3}, $\operatorname{cyc}$ is defined by $\operatorname{cyc}(D)=c_1(\sL(D))\frown \operatorname{cyc}[X]$, where $[X]\in Z_g(X)$ is the fundamental class of $X$. It follows that $\operatorname{cyc}$ descends to a homomorphism, which we denote by the same notation, $\operatorname{cyc}: \operatorname{CaCl}(X)\rightarrow H_{g-1,g-1}(X)$. Here $\operatorname{CaCl}(X)$ is the class group of tropical Cartier divisors on $X$. Consequently, we see that $\operatorname{cyc}([D])\in H_{g-1,g-1}(X)$ is Poincar\'e dual to $c_1(\sL(D))\in H^{1,1}(X)$. 
\end{remark}

\begin{lemma}\label{lem:cycX}
Let $X=N_\bR/\Lambda$ be a real torus with integral structure of dimension $g$. Then there exist bases $\{\lambda_1,\dots, \lambda_g\}$ for $\Lambda$ and $\{\eta_1,\dots, \eta_g\}$ for $N$, for which $\operatorname{cyc}([X])=\lambda_{[g]}\otimes \eta_{[g]}\in H_{g,g}(X)=\bigwedge^g\Lambda\otimes\bigwedge^gN$.
\end{lemma}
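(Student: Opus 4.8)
The plan is to reduce the statement to the claim that $\operatorname{cyc}([X])$ is a \emph{generator} of the group $H_{g,g}(X)$, and then to realize that generator in the desired form by a harmless choice of bases. First I would recall the two explicit descriptions already available: from \eqref{Eqn:Torushom} we have $H_{g,g}(X)\cong\bigwedge^g\Lambda\otimes\bigwedge^g N$, which is free of rank one, and from \eqref{Eqn:Toruscohom} we have $H^{0,0}(X)\cong\bigwedge^0\Lambda^*\otimes\bigwedge^0 N^*\cong\bZ$, generated by the unit $1$ of the cup product. For any bases $\{\lambda_i\}$ of $\Lambda$ and $\{\eta_i\}$ of $N$, the element $\lambda_{[g]}\otimes\eta_{[g]}$ is then a generator of $H_{g,g}(X)$, so the content of the lemma is precisely that $\operatorname{cyc}([X])$ is a generator as well.

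The key step, and the one I expect to be the real obstacle, is establishing the generator claim (as opposed to merely knowing $\operatorname{cyc}([X])$ is nonzero, which would leave open the possibility that it is a proper multiple of a generator). For this I would invoke the Poincar\'e duality isomorphism $H^{*,*}(X)\xrightarrow{\sim}H_{*,*}(X)$, $c\mapsto c\frown\operatorname{cyc}[X]$, recorded in \S\ref{subsubsec:cherncycleclass}, which in bidegree $(0,0)$ restricts to an isomorphism $H^{0,0}(X)\xrightarrow{\sim}H_{g,g}(X)$ of abelian groups. Since $1$ is the unit of the cup product we have $1\frown\operatorname{cyc}[X]=\operatorname{cyc}[X]$; this can also be read off directly from the explicit formula $(\alpha\otimes\omega)\frown(\beta\otimes\xi)=(\alpha\lrcorner\beta)\otimes(\omega\lrcorner\xi)$ by taking $\alpha,\omega$ to be the scalar $1\in\bigwedge^0$, since interior product by a scalar is the identity. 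Thus the generator $1\in H^{0,0}(X)$ maps to $\operatorname{cyc}[X]$ under an isomorphism, forcing $\operatorname{cyc}[X]$ to be a generator of $H_{g,g}(X)\cong\bZ$.

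Finally I would conclude by a sign adjustment. Fixing arbitrary bases $\{\lambda_i\}$, $\{\eta_i\}$, both $\operatorname{cyc}[X]$ and $\lambda_{[g]}\otimes\eta_{[g]}$ are generators of the infinite cyclic group $H_{g,g}(X)$, so $\operatorname{cyc}[X]=\varepsilon\,\lambda_{[g]}\otimes\eta_{[g]}$ for some $\varepsilon\in\{\pm1\}$. If $\varepsilon=1$ we are done; if $\varepsilon=-1$, replacing $\lambda_1$ by $-\lambda_1$ keeps $\{\lambda_1,\dots,\lambda_g\}$ a basis of $\Lambda$ and flips the sign of $\lambda_{[g]}$, so with this new basis the equality $\operatorname{cyc}([X])=\lambda_{[g]}\otimes\eta_{[g]}$ holds. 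The entire argument is thus essentially formal once the Poincar\'e duality input is granted; the only nontrivial point is that this input upgrades ``nonzero'' to ``generator,'' which is what makes the clean equality (rather than an equality up to an integer factor) possible.
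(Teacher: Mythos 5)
Your proposal is correct and follows essentially the same route as the paper: both express $\operatorname{cyc}([X])=c\,\lambda_{[g]}\otimes\eta_{[g]}$ for arbitrary bases, use the fact that the Poincar\'e duality map $-\frown\operatorname{cyc}[X]$ is an isomorphism to force $c=\pm1$, and then rescale a single basis vector of $\Lambda$ to fix the sign. The only difference is one of economy: you evaluate the duality isomorphism solely in bidegree $(0,0)$, where $1\frown\operatorname{cyc}[X]=\operatorname{cyc}[X]$ immediately shows the cycle class is a generator, whereas the paper tracks the map on all basis elements via the interior-product description of the cap product.
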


\begin{proof}
Let $\{\lambda_1,\dots, \lambda_g\}$ and $\{\eta_1,\dots, \eta_g\}$ be arbitrary fixed bases for $\Lambda$ and $N$ respectively. Since $\operatorname{cyc}([X])\in H_{g,g}(X)=\bigwedge^g\Lambda\otimes\bigwedge^gN$, we know $\operatorname{cyc}([X])=c.\lambda_{[g]}\otimes \eta_{[g]}$ for some $c\in\bZ$. We know that the Poincar\'e duality is given by the map $-\frown \operatorname{cyc}([X]): H^{p,q}(X)\rightarrow H_{g-p, g-q}(X)$. Since cap product is the interior product on exterior algebra (see \cite{GSJ}*{Equation (6.3)}), by a similar argument as in \cite{GSJ}*{Lemma 9.6}, we see that the Poincar\'e dual of $\lambda^{\ast}_I\otimes\eta_J\in H^{p.q}(X)$ is $c.\lambda_{I^o}\otimes \eta_{J^o}\in H_{g-p, g-q}(X)$. Since the Poincar\'e duality map is an isomorphism, we must have $c=\pm 1$. By rescaling a basis element of $\Lambda$ by $\pm 1$, we may assume $c=1$, whereby $\operatorname{cyc}([X])=\lambda_{[g]}\otimes \eta_{[g]}\in H_{g,g}(X)$.
\end{proof}

\subsubsection{Pontryagin product on cohomology}\label{subsubsec:cohompontryagin} We now describe the binary operation on tropical cohomology which is Poincar\'e dual to the Pontryagin product on tropical homology. Under the identification $H_{\bullet, \bullet}(X)=\bigwedge^{\bullet}\Lambda\otimes \bigwedge^{\bullet}N$, the tropical Pontryagin product $\star$ is the wedge product, that is, $(\lambda_I\otimes \eta_J)\star (\lambda_{I'}\otimes \eta_{J'})=(\lambda_I\wedge \lambda_{I'})\otimes (\eta_J\wedge \eta_{J'})$. This product is $0$ if either $I\cap I'$ or $J\cap J'$ is non-empty. Otherwise, $(\lambda_I\otimes \eta_J)\star (\lambda_{I'}\otimes \eta_{J'})=\pm \lambda_{I\cup I'}\otimes \eta_{J\cup J'}$, where the sign depends on the index sets $I, I', J$ and $J'$. By Poincar\'e duality, we obtain a tropical Pontryagin product on cohomology, which we also denote by $\star$:
$$\begin{tikzcd}
{H^{p,q}(X)\times H^{p',q'}(X)} \arrow[d, "\sim"'] \arrow[rr, "\star"] &  & {H^{g-p-p',g-q-q'}(X)}                       \\
{H_{g-p,g-q}(X)\times H_{g-p', g-q'}(X)} \arrow[rr, "\star"]           &  & {H_{2g-p-p', 2g-q-q'}(X)} \arrow[u, "\sim"']
\end{tikzcd}$$
Under the identification $H^{\bullet, \bullet}(X)=\bigwedge^{\bullet}\Lambda^{\ast}\otimes \bigwedge^{\bullet}N^{\ast}$ and the description of the Poincar\'e duality map, we see that the cohomological Pontryagin product is given by: $$(\lambda^{\ast}_{I^o}\otimes \eta^{\ast}_{J^o})\star (\lambda^{\ast}_{I^{'o}}\otimes \eta^{\ast}_{J^{'o}})=\pm \lambda_{I^o\cap I^{'o}}\otimes \eta_{J^o\cap J^{'o}}\,,$$
where the sign is induced by the Pontryagin product on homology.

\begin{proposition}\label{prop:tropgenpoincare}
Let $[D]\in \operatorname{CaCl}(X)$ be an ample tropical Cartier divisor class on $X=N_\bR/\Lambda$ with $d=h^0(X,\sL(D))$. Let $c_{[D]}=[D]^{\cdot g-1}/(d(g-1)!)$, where $[D]^{\cdot g-1}$ is the $(g-1)$-fold tropical intersection product of $D$. Then, for $0\leq p\leq g$, we have the equality 
\begin{equation}\label{Eqn:genpoin}
\frac{[D]^{\cdot p}}{p!}=d\frac{c_{[D]}^{\star \ g-p}}{(g-p)!}\,,\end{equation}
in tropical homology.
\end{proposition}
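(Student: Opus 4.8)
The plan is to dualize the statement into tropical cohomology and then run the Fourier--Mukai machinery of the previous subsections. Since $X$ is a closed tropical manifold, Poincar\'e duality is an isomorphism, so it suffices to prove the Poincar\'e-dual identity in $H^{p,p}(X)$. By Remark~\ref{rem:divdual} the Poincar\'e dual of $[D]^{\cdot p}/p!$ is $E^{\wedge p}/p!$, where $E = c_1(\sL(D))$, and by \S\ref{subsubsec:cohompontryagin} Poincar\'e duality carries the homological Pontryagin product to the cohomological Pontryagin product $\star$. Writing $\xi \coloneqq \frac{1}{d(g-1)!}E^{\wedge g-1}\in H^{g-1,g-1}(X)$ for the dual of $c_{[D]}$, the proposition becomes the cohomological identity
\[
\frac{E^{\wedge p}}{p!}=\frac{d}{(g-p)!}\,\xi^{\star(g-p)}\qquad\text{in }H^{p,p}(X).
\]

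First I would apply the Fourier--Mukai transform $F\coloneqq F_{E_X}$, which is an isomorphism by Corollary~\ref{cor:FMpq}, so it is enough to check the identity after applying $F$. The left-hand side is handled directly by Corollary~\ref{maincor}, giving $F(E^{\wedge p}/p!)=\frac{(-1)^{g-p}}{d}\phi_{\sL\ast}(E^{\wedge g-p}/(g-p)!)$. For the right-hand side I would use the tropical analogue of Mukai's exchange formula, that $F$ carries the cohomological Pontryagin product on $X$ to the cup product on $\widehat{X}$, whence $F(\xi^{\star(g-p)})=F(\xi)^{\wedge(g-p)}$; this should be read off from the explicit description of $F$ in Proposition~\ref{prop: fourierdesc} as Poincar\'e duality composed with the $(-1)$-maps of Lemma~\ref{lem:map}, matched against the basis formulas for the two products. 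Applying Corollary~\ref{maincor} with $p=g-1$ yields $F(\xi)=-\frac{1}{d^2}\phi_{\sL\ast}(E)$.

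Collecting constants, the identity after applying $F$ collapses to the single relation
\[
(\phi_{\sL\ast}E)^{\wedge(g-p)}=d^{\,2(g-p)-2}\,\phi_{\sL\ast}\!\left(E^{\wedge(g-p)}\right),
\]
between the cup power of the pushforward and the pushforward of the cup power. I would deduce this from the projection formula for $\phi_{\sL}$: viewing $E$ rationally as the pullback $\phi_{\sL}^{\ast}(E^{-1})$ (legitimate since $E$ is nondegenerate and $H^{1,1}(\widehat{X})_{\mathbb{Q}}$ contains the form $E^{-1}$), one gets $\phi_{\sL\ast}(E^{\wedge(g-p)})=\phi_{\sL\ast}(1)\smile (E^{-1})^{\wedge(g-p)}$ and $\phi_{\sL\ast}E=\phi_{\sL\ast}(1)\smile E^{-1}$, while $\phi_{\sL\ast}(1)=d^2$ in the bidegree-preserving normalization \eqref{eqn:pushforward} (each exterior factor $\bigwedge^{g}(-E)$ contributes a determinant, and $\deg\phi_{\sL}=\det E=d$ by Corollary~\ref{lem:tropanRR}). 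The boundary case $p=g$ reduces to the geometric Riemann--Roch relation $\int_X E^{\wedge g}/g!=d$, consistent with $\phi_{\sL\ast}(1)=d^2$.

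I expect the main obstacle to be the tropical exchange formula $F(\alpha\star\beta)=F(\alpha)\smile F(\beta)$, which is not available off the shelf and whose proof amounts to reconciling the sign conventions in the expansion of $F$ from Proposition~\ref{prop: fourierdesc} with those of the Pontryagin product in \S\ref{subsubsec:cohompontryagin}. As an alternative that avoids the exchange formula, one may expand both sides of the displayed cohomological identity directly in the bases $\{\lambda^{\ast}_I\otimes\eta^{\ast}_J\}$: by \eqref{detp} the left-hand coefficients are the minors $\det[E]_{I_p,J_p}$, whereas the coefficients of $\xi$ are, up to sign, the entries of $d\,[E]^{-1}$, so that $\xi^{\star(g-p)}$ assembles the complementary minors of $[E]^{-1}$; Jacobi's identity \eqref{EqnJacobi} then matches these with $\det[E]_{I_p,J_p}/\det E$, the sum over orderings supplying the factor $(g-p)!$ and the power of $d$. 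The sign and factorial bookkeeping is the corresponding delicate point on this route.
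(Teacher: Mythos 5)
Your strategy is correct in outline, and it is genuinely different from the paper's own proof. The paper also reduces \eqref{Eqn:genpoin} to its Poincar\'e-dual statement in cohomology, but it never applies $F_{E_X}$ to the Pontryagin side; instead it applies $\phi_{\sL\ast}$ to both sides, uses ampleness to choose bases in which $c_1(\sL)=\sum_i a_i\lambda_i^{\ast}\otimes\eta_i^{\ast}$ is diagonal, computes $\phi_{\sL\ast}$ explicitly in those bases (formula \eqref{eqnwedge5}), checks by hand that $\phi_{\sL\ast}(c_{\sL}^{\star g-p})=\phi_{\sL\ast}(c_{\sL})^{\star g-p}$ and that the star power of $F_{E_X}(c_1(\sL))$ recovers $F_{E_X}\bigl(c_1(\sL^{\cdot g-p})/(g-p)!\bigr)$ up to sign, then invokes Corollary~\ref{maincor} twice and concludes from the injectivity of $\phi_{\sL\ast}$ on $H^{p,p}(X)$, which is visible in the diagonal basis. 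You instead apply $F_{E_X}$ itself (an isomorphism by Corollary~\ref{cor:FMpq}) and replace the basis computation by two structural inputs: a Mukai-type exchange formula and the projection formula for $\phi_{\sL}$, together with the identities $\phi_{\sL}^{\ast}(E^{-1})=E$ and $\phi_{\sL\ast}(1)=d^{2}$, both of which you justify correctly (and $d^{2}$ rather than $\deg\phi_{\sL}=d$ is indeed right, because both exterior factors of the tropical pushforward contribute a determinant). Your arithmetic reduction to $(\phi_{\sL\ast}E)^{\wedge(g-p)}=d^{\,2(g-p)-2}\,\phi_{\sL\ast}(E^{\wedge(g-p)})$ checks out, and the projection-formula argument closes it. What your route buys is basis-independence and a clean isolation of the structural facts driving the theorem; what the paper's route buys is that it stays entirely inside results it has already proved and needs neither an exchange formula nor a projection formula.

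Three caveats are needed to make your route airtight. First, the exchange formula as you state it, $F(\alpha\star\beta)=F(\alpha)\smile F(\beta)$ for arbitrary classes, is false: writing $F_{E_X}$ as a sign times Poincar\'e duality as in Proposition~\ref{prop: fourierdesc} and comparing prefactors, one finds
\begin{equation}
F_{E_X}(\alpha\star\beta)=(-1)^{s_1 s_2}\,F_{E_X}(\alpha)\smile F_{E_X}(\beta)\,,
\end{equation}
where $s_1,s_2$ are the total degrees of $\alpha,\beta$; for instance, when $g=1$ one computes $F(\lambda^{\ast}\star\eta^{\ast})=-F(\lambda^{\ast})\smile F(\eta^{\ast})$. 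Since $\xi$ and all of its star powers lie in groups $H^{k,k}$, their total degrees are even, the Koszul sign is $+1$, and the instance you actually use, $F(\xi^{\star(g-p)})=F(\xi)^{\wedge(g-p)}$, is true; but the auxiliary lemma must be stated with this sign (or restricted to even total degree) --- this is exactly the bookkeeping you flagged as delicate, and it does bite. Second, the projection formula for tropical cohomology pushforward is nowhere stated in the paper; you would need to quote it from \cite{GSH} or verify it directly for homomorphisms of real tori with integral structure (routine in the explicit lattice description, but a required step). Third, a small slip in your fallback route: the coefficients of $\xi$ in the basis $\lambda^{\ast}_I\otimes\eta^{\ast}_J$ are, up to sign, the entries of $[E]^{-1}$, not of $d\,[E]^{-1}$, since the $(g-1)\times(g-1)$ minors of $[E]$ equal $\det[E]$ times entries of $[E]^{-1}$ up to sign and $\det[E]=d$ cancels the $1/d$ in the definition of $c_{\sL}$.
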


\begin{proof}
  By the Poincar\'e duality isomorphism, it suffices to prove the Poincar\'e dual identity of \eqref{Eqn:genpoin} in tropical cohomology. Let $c_\sL\coloneqq c_1(\sL)^{\wedge g-1}/(d(g-1)!)$, where $\sL=\sL(D)\in \pic(X)$ is the line bundle corresponding to $[D]\in \operatorname{CaCl}(X)$. By definition of intersection product of line bundles, $c_\sL=\operatorname{cyc}(c_{[D]})$. Therefore, $c_{\sL}$ is Poincar\'e dual to $c_{[D]}$. Thus, the Poincar\'e dual of \eqref{Eqn:genpoin} is:
\begin{equation}\label{eqgenpoincoh}
    \frac{c_1(\sL)^{\wedge p}}{p!}=d\frac{c^{\star g-p}_{\sL}}{(g-p)!}\,,
\end{equation}
where $\star: H^{\bullet,\bullet}(X)\otimes H^{\bullet, \bullet}(X)\rightarrow H^{\bullet, \bullet}(X)$ in \eqref{eqgenpoincoh} is the cohomological Pontryagin product as described in \S\ref{subsubsec:cohompontryagin}. Let $\{\lambda_1,\dots, \lambda_g\}$ be a fixed basis of $\Lambda$ with corresponding dual basis $\{\lambda_1^{\ast},\dots, \lambda_g^{\ast}\}$ of $\Lambda^{\ast}$. Since $\sL$ is an ample line bundle, the first Chern class $c_1(\sL)$ defines an injective $\mathbb{Z}$-module homomorphism $\rho_{E}: \Lambda\rightarrow N^{\ast}$ between free $\bZ$-modules of equal rank $g$, as described in \S\ref{subsec:tropAV}. Thus, for each $1\leq i\leq g$, there exist $a_i\in \bZ$ such that $\{\eta^{\ast}_i\coloneqq \rho_E(\lambda_i)/a_i, \ 1\leq i\leq g\}$ form a $\bZ$-basis for $N^{\ast}$. We can write $c_1(\sL)=\sum_{i=1}^{g}a_i\lambda_i^{\ast}\otimes\eta^{\ast}_i$ as an element of $H^{1,1}(X)=\bigwedge \Lambda^{\ast}\otimes \bigwedge N^{\ast}$. Let $\phi_{\sL}:X\rightarrow \widehat{X}$ be the natural homomorphism induced by $\sL$. We can simplify the formula \eqref{eqnwedge4} for the pushforward on cohomology $\phi_{\sL\ast}: H^{g-p,g-p}(X)\rightarrow H^{g-p,g-p}(\widehat{X})$ as: 
\begin{equation}\label{eqnwedge5}
\phi_{\sL\ast}(\lambda^{\ast}_{I_{g-p}}\otimes \eta^{\ast}_{J_{g-p}})= a_{I^o_{g-p}}a_{J^o_{g-p}}\eta_{I_{g-p}}\otimes \lambda_{J_{g-p}}\,,
\end{equation}
where $a_K=\prod_{i\in K}a_i$ for any subset $K\subseteq [g]$. By \eqref{eqnwedge5} it follows that $\phi_{\sL\ast}(c_{\sL}^{\star g-p})=\phi_{\sL\ast}(c_{\sL})^{\star g-p}$. Using this and Corollary~\ref{maincor}, we have:
    \begin{equation}\label{eq:A}
    d\phi_{\sL\ast}\!\left(\frac{c_{\sL}^{\star \ g-p}}{(g-p)!}\right)\!= \! \frac{d}{(g-p)!}\phi_{\sL\ast}\!\left(\frac{c_1(\sL^{\cdot g-1})}{d(g-1)!}\right)^{\star g-p} \!\!\!\!\!\!\!= \frac{(-1)^{(g-1)(g-p)}d}{(g-p)!}F_{E_X}(c_1(\sL))^{\star g-p}\,.
    \end{equation}
    By Proposition~\ref{prop: fourierdesc}, we see that: $$F_{E_X}(c_1(\sL))=(-1)^{g-1}\sum_{i=1}^{g}a_i\lambda_{[g]\setminus\{i\}}\otimes \eta_{[g]\setminus\{i\}}\in H^{g-1,g-1}(\widehat{X})\,.$$ Then by the definition of the tropical Pontryagin product, we see that: \begin{equation}\label{eq:B}
    (-1)^{(g-1)(g-p)}\frac{F_{E_X}(c_1(\sL))^{\star g-p}}{(g-p)!}=\sum_{\substack{I\subseteq [g]\\ |I|=g-p}}a_I\lambda_{I^o}\otimes\eta_{I^o}=(-1)^pF_{E_X}\left(\frac{c_1(\sL^{\cdot g-p})}{(g-p)!}\right)\,.
    \end{equation}
    Thus, using \eqref{eq:A} and \eqref{eq:B}, we obtain:
    \[d\phi_{\sL\ast}\left(\frac{c_{\sL}^{\star \ g-p}}{(g-p)!}\right)=(-1)^pdF_{E_X}\left(\frac{c_1(\sL^{\cdot g-p})}{(g-p)!}\right)=\phi_{\sL\ast}\left(\frac{c_1(\sL^{\cdot p})}{p!}\right)\,.\]Since $\phi_{\sL\ast}: H^{p, p}(X)\rightarrow H^{p, p}(\widehat{X})$ is a bijection by \eqref{eqnwedge5}, this proves \eqref{eqgenpoincoh} and, by Poincar\'e duality, \eqref{Eqn:genpoin}.
\end{proof}

\begin{remark}\label{rem:nondegencase}
    While we prove Proposition~\ref{prop:tropgenpoincare} for ample line bundles, one can see that the same proof works for any nondegenerate tropical line bundle $\sL$ with $c_1(\sL)=E$, when $d$ in \eqref{Eqn:genpoin} is replaced by $\det E$.
\end{remark}

\section{Some consequences}\label{sec:applications}

In this final section we provide some consequences of Theorem~\ref{propgenpoin}.We provide some tropical analogues of classical results, including a tropical geometric Riemann--Roch theorem, as well as instances of special cases of our result which have previously appeared or conjectured in literature under the guise of tropical Poincar\'e formulas. 

\subsection{A tropical geometric Riemann--Roch theorem}\label{subsec:intersection}

In this section, we provide intersection number calculations for ample line bundles (or divisors) on tropical abelian varieties. We first note the case $p=g$ of Theorem~\ref{propgenpoin}, which provides the top self-intersection power of any ample divisor class on a tropical abelian variety.

\begin{corollary}\label{cor1}
    Let $[D]\in \operatorname{CaCl}(X)$ be an ample tropical Cartier divisor class on a tropical abelian variety $X$ of dimension $g$ with $d=h^0(X, \sL(D))$. Then we have the following equality: $$\frac{[D]^{\cdot g}}{g!}=d[1]\,,$$ in tropical homology. Here $[1]$ is the unit of the ring $(H_{\bullet,\bullet}(X), +, \star)$.
\end{corollary}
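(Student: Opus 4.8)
The plan is to obtain the claim directly as the extremal case $p=g$ of the generalized tropical Poincar\'e formula in Theorem~\ref{propgenpoin} (equivalently Proposition~\ref{prop:tropgenpoincare}). Substituting $p=g$ into
\[
\frac{[D]^{\cdot p}}{p!}=d\,\frac{c_{[D]}^{\star\,g-p}}{(g-p)!}
\]
collapses the exponent on the right-hand side to $g-p=0$, so that the formula becomes $[D]^{\cdot g}/g!=d\,c_{[D]}^{\star\,0}/0!=d\,c_{[D]}^{\star\,0}$. Both sides live in $H_{0,0}(X)\cong\mathbb{Z}$, since the $g$-fold tropical intersection product $[D]^{\cdot g}$ of the codimension-one class $[D]$ is a $0$-cycle. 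The entire content of the corollary therefore reduces to identifying the empty Pontryagin product $c_{[D]}^{\star\,0}$ with the unit $[1]$ of the ring $(H_{\bullet,\bullet}(X),+,\star)$.

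To justify this last identification I would appeal to Definition~\ref{Def:Pontryaginprod}: the Pontryagin product $\star$ is induced by the group law $\mu\colon X\times X\to X$, and its multiplicative unit is the cycle class of the identity element $0\in X$, which lies in $H_{0,0}(X)\cong\bigwedge^0\Lambda\otimes\bigwedge^0 N\cong\mathbb{Z}$. Under the explicit description $(\alpha\otimes\omega)\star(\beta\otimes\xi)=(\alpha\wedge\beta)\otimes(\omega\wedge\xi)$ of $\star$ on $H_{\bullet,\bullet}(X)=\bigwedge^{\bullet}\Lambda\otimes\bigwedge^{\bullet}N$, this unit is the generator $1\otimes 1$, which is precisely $[1]$, and the empty Pontryagin product equals it by the standard convention for an empty product in a graded ring. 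Feeding this back in yields $[D]^{\cdot g}/g!=d[1]$, as asserted.

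I expect no genuine obstacle here: all the substance is already carried by Theorem~\ref{propgenpoin}, and what remains is the bookkeeping of the empty-product convention together with the degree check placing both sides in $H_{0,0}(X)$. As a consistency remark, this identity is exactly what underlies the geometric Riemann--Roch statement of Theorem~\ref{ThmMain4}, where the degree of $[D]^{\cdot g}$ is read off as $d\cdot g!$.
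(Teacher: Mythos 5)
Your proof is correct and follows essentially the same route as the paper: the paper gives no separate argument for this corollary, presenting it exactly as the case $p=g$ of Theorem~\ref{propgenpoin}. Your extra bookkeeping --- checking that both sides land in $H_{0,0}(X)\cong\mathbb{Z}$ and that the empty Pontryagin product $c_{[D]}^{\star\,0}$ is the unit $[1]=1\otimes 1\in\bigwedge^0\Lambda\otimes\bigwedge^0 N$, i.e.\ the class of the identity point --- merely makes explicit what the paper leaves implicit.
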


As a consequence of Corollary~\ref{cor1}, we obtain the following tropical analogue of geometric Riemann--Roch theorem for ample line bundles on abelian varieties (see \cite[\S16]{MumAV}, \cite[Theorem 3.6.3]{CAV}).

\begin{corollary}\label{cor:gRR}
    Let $\mathscr{L}\coloneqq \mathscr{L}(D)$ be an ample line bundle on a tropical abelian variety $X$ of dimension $g$ with $d=h^0(X, \mathscr{L})$. Then:
    \[d=\int_X\frac{c_1(\mathscr{L})^{\wedge g}}{g!}\,.\]
    Here $(D^g)\coloneqq \int_X c_1(\mathscr{L})^{\wedge g}$ is the $g$-fold intersection number of $D$, given by integration of tropical Dolbeault superforms.
\end{corollary}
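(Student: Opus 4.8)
The plan is to deduce Corollary~\ref{cor:gRR} from the top self-intersection identity of Corollary~\ref{cor1} simply by applying the degree map. Recall that Corollary~\ref{cor1} is the special case $p=g$ of Proposition~\ref{prop:tropgenpoincare}: the empty Pontryagin power $c_{[D]}^{\star\,0}/0!$ is the unit $[1]$, so the identity reads
\[
\frac{[D]^{\cdot g}}{g!}=d\,[1]
\]
in $H_{0,0}(X)$, where $[1]\in H_{0,0}(X)$ is the unit of the Pontryagin ring $(H_{\bullet,\bullet}(X),+,\star)$, i.e.\ the class of the identity point $0\in X$ taken with multiplicity one.

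First I would apply the degree map $\int_X\colon H_{0,0}(X)\to\bZ$, which is the pushforward along the (proper) constant map $X\to\mathrm{pt}$, as in \S\ref{subsubsec:cycle}. Under the identification $H_{0,0}(X)\cong\bigwedge^0\Lambda\otimes\bigwedge^0 N\cong\bZ$ of \eqref{Eqn:Torushom}, the unit $[1]$ corresponds to $1\in\bZ$, so $\int_X[1]=1$. Applying $\int_X$ to both sides of the displayed identity gives
\[
\frac{1}{g!}\int_X[D]^{\cdot g}=d\,.
\]

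It then remains to identify the degree $\int_X[D]^{\cdot g}$ of the tropical $0$-cycle $[D]^{\cdot g}$ with the intersection number $(D^{\cdot g})=\int_X c_1(\mathscr{L})^{\wedge g}$ obtained by integrating the top Dolbeault superform. I expect this to be the only step requiring genuine care: it is the compatibility of the tropical intersection product on cycles with the cup product on tropical cohomology. Concretely, via the cycle class map and the commutative diagram \eqref{Eqn:c1cyc} relating $c_1$, $\operatorname{cyc}$ and $-\frown\operatorname{cyc}[X]$, the class $\operatorname{cyc}([D]^{\cdot g})\in H_{0,0}(X)$ is Poincar\'e dual to $c_1(\mathscr{L})^{\wedge g}\in H^{g,g}(X)$, and the degree map on $0$-cycles agrees with integration of top superforms under this duality. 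For a real torus with integral structure this can be checked explicitly from the descriptions \eqref{Eqn:Torushom} and \eqref{Eqn:Toruscohom}: both numbers equal $g!\det E$ with $E=c_1(\mathscr{L})$ (compare \eqref{detp} with Lemma~\ref{lem:cycX}), which is also consistent with $d=\det E$ from Lemma~\ref{rrlemma}. Combining the two displayed equalities then yields $d=\int_X c_1(\mathscr{L})^{\wedge g}/g!$, as claimed.
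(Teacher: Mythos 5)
Your proposal is correct and is essentially the paper's own proof: both arguments reduce the statement to Corollary~\ref{cor1} via the degree map on $H_{0,0}(X)$, and both identify the homological degree of $[D]^{\cdot g}$ with the superform integral $\int_X c_1(\mathscr{L})^{\wedge g}$ --- you merely assemble these steps in the opposite order from the paper. The only imprecision is the reference for the step you rightly flag as the crux: diagram~\eqref{Eqn:c1cyc} covers only the divisor case, so the Poincar\'e duality between $\operatorname{cyc}([D]^{\cdot g})$ and $c_1(\mathscr{L})^{\wedge g}$ actually requires the multiplicativity of the cycle class map (\cite{GSJ}*{Theorem~3.3}, third equality), and the agreement of the degree map with superform integration is \cite{JSS}*{Theorem~4.33} together with \cite{JSS}*{Theorem~1} --- precisely the references the paper invokes; your explicit verification that both quantities equal $g!\det E$ is a valid torus-specific substitute for the latter.
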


\begin{proof}
Note that $c_1(\sL)^{\wedge g}\in H^{g,g}(X)$. By \cite[Theorem~1]{JSS}, one can identify the tropical cohomology group $H^{g,g}(X)$ with the Dolbeault cohomology group $H^{g,g}_{d''}(X)$ of superforms. Thus, we consider $c_1(\sL)^{\wedge g}$ as $(g,g)$-tropical superform on $X$. By the integration pairing of superforms \cite[Definition~4.11]{JSS}, we see that $\int_Xc_1(\sL)^{\wedge g}\in H^{0,0}_{d", c}(X)^{\ast}=H^{0,0}_{d''}(X)^{\ast}\cong H^{0,0}(X)^{\ast}$ (since $X$ is compact). By the explicit description of tropical homology and cohomology groups of a tropical abelian variety $X$, we see that $H^{0,0}(X)^{\ast}$ is naturally isomorphic to $H_{0,0}(X)$. Since $X$ is a tropical manifold, by \cite[Theorem~4.33]{JSS} the map $\int_X-: H^{g,g}_{d"}(X)\rightarrow H^{0,0}_{d"}(X)^{\ast}$ is the Poincar\'e duality isomorphism. However, the Poincar\'e duality isomorphism for tropical cohomology is given by $-\cap\operatorname{cyc}[X]: H^{g,g}(X)\rightarrow H_{0,0}(X)$ (see \cite[\S3F]{GSJ}). Thus, $\int_X c_1(\sL)^{\wedge g}= c_1(\sL)^{\wedge g}\cap \operatorname{cyc}[X]$ under the natural identification $H^{0,0}_{d"}(X)^{\ast}\cong H_{0,0}(X)$ for a real torus with integral structure. By the third equality of \cite{GSJ}*{Theorem~3.3}, it follows that $\operatorname{cyc}([D]^{\cdot g})=c_1(\sL)^{\wedge g}\cap \operatorname{cyc}[X]$. Thus, we obtain the following equality in $H_{0,0}(X)$:
    \begin{equation}\label{Eqn:geomRR}
    \frac{(D^{g})}{g!}\coloneqq\int_X\frac{c_1(\mathscr{L})^{\wedge g}}{g!}= \frac{[D]^{\cdot g}}{g!}=d[1]\,,\end{equation} 
    where the third equality follows from Corollary~\ref{cor1}. Since for a tropical abelian variety, the degree map (also denoted by $\int_X$) $H_{0,0}(X)\rightarrow \mathbb{Z}$ is an isomorphism mapping $[1]\in H_{0,0}(X)$ to $1\in\bZ$  (see \cite[\S3F]{GSJ}), we obtain the result by applying the degree map to \eqref{Eqn:geomRR}.
\end{proof}

\begin{remark}\label{rem:gRR}
     Corollary~\ref{cor:gRR}, in particular, proves that $(D^g)/g!$ is an integer. A combinatorial version (in terms of stable intersection numbers) of this fact is proven in \cite[Theorem~47]{KS}, answering a question of Cartwright.
\end{remark}

We note the following tropical analogue of \cite[Corollaire~3]{Beauv2}. We skip the proof as it is analogous to the classical case (see \cite[Corollary~16.5.8]{CAV}).

\begin{corollary}
    For $p, q\geq 0$ and an ample tropical Cartier divisor class $[D]$ on a tropical abelian variety $X$ of dimension $g$, we have the equality:
    \[ \frac{[D]^{\cdot p}}{p!}\star \frac{[D]^{\cdot q}}{q!}=d{2g-p-q\choose g-p}\frac{[D]^{\cdot p+q-g}}{(p+q-g)!}\,,\]
    in tropical homology.
\end{corollary}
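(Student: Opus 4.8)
The plan is to deduce the identity directly from the generalized Poincar\'e formula of Proposition~\ref{prop:tropgenpoincare}, which expresses each normalized intersection power $[D]^{\cdot k}/k!$ as a Pontryagin power of the single class $c_{[D]}\in H_{1,1}(X)$. Write $c\coloneqq c_{[D]}$. Under the identification $H_{\bullet,\bullet}(X)=\bigwedge^{\bullet}\Lambda\otimes\bigwedge^{\bullet}N$ of \S\ref{subsec:realtoricohom}, the Pontryagin product $\star$ is the wedge product and is in particular associative, so $c^{\star a}\star c^{\star b}=c^{\star(a+b)}$ and $c^{\star k}\in H_{k,k}(X)$. I will first treat the range $0\le p,q\le g$ with $p+q\ge g$, in which all exponents appearing below lie in $[0,g]$; the remaining cases are degenerate and handled at the end.

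First I would rewrite the left-hand side by applying Proposition~\ref{prop:tropgenpoincare} to each factor:
\[
\frac{[D]^{\cdot p}}{p!}\star\frac{[D]^{\cdot q}}{q!}
=\Bigl(d\,\frac{c^{\star(g-p)}}{(g-p)!}\Bigr)\star\Bigl(d\,\frac{c^{\star(g-q)}}{(g-q)!}\Bigr)
=d^{2}\,\frac{c^{\star(2g-p-q)}}{(g-p)!\,(g-q)!}\,,
\]
where the last step uses $c^{\star(g-p)}\star c^{\star(g-q)}=c^{\star(2g-p-q)}$. Next I would rewrite the right-hand side by applying the same formula to the factor $[D]^{\cdot(p+q-g)}/(p+q-g)!$:
\[
d\binom{2g-p-q}{g-p}\frac{[D]^{\cdot(p+q-g)}}{(p+q-g)!}
=d^{2}\binom{2g-p-q}{g-p}\frac{c^{\star(2g-p-q)}}{(2g-p-q)!}\,.
\]
Comparing these two expressions, the corollary reduces to the elementary identity $\binom{2g-p-q}{g-p}=(2g-p-q)!/\bigl((g-p)!\,(g-q)!\bigr)$, which holds because $(2g-p-q)-(g-p)=g-q$.

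This computation is the entire substance of the argument, so I do not expect a serious obstacle; the only point requiring care is the bookkeeping in the degenerate ranges. When $p+q<g$ one has $2g-p-q>g$, so $c^{\star(2g-p-q)}\in H_{2g-p-q,2g-p-q}(X)=0$ for dimension reasons, forcing the left-hand side to vanish; this matches the right-hand side under the convention that $[D]^{\cdot(p+q-g)}$ with negative exponent is zero. When $p>g$ (or $q>g$) the left-hand side vanishes because $[D]^{\cdot p}\in H_{g-p,g-p}(X)=0$, while on the right the binomial coefficient $\binom{2g-p-q}{g-p}$ vanishes since its lower index $g-p$ is negative. Verifying these boundary conventions is routine, and in every non-degenerate case the equality is carried entirely by Proposition~\ref{prop:tropgenpoincare} together with the binomial identity above.
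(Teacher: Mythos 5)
Your proposal is correct and follows essentially the same route the paper intends: the paper omits the proof, referring to the classical case \cite[Corollary~16.5.8]{CAV}, whose argument is precisely your substitution of the generalized Poincar\'e formula (Proposition~\ref{prop:tropgenpoincare}) into both sides, the associativity $c_{[D]}^{\star(g-p)}\star c_{[D]}^{\star(g-q)}=c_{[D]}^{\star(2g-p-q)}$, and the binomial identity $\binom{2g-p-q}{g-p}=(2g-p-q)!/\bigl((g-p)!\,(g-q)!\bigr)$. Your handling of the degenerate ranges ($p+q<g$, or $p>g$) via vanishing of $H_{k,k}(X)$ for $k>g$ and the standard conventions is a reasonable reading of the statement and does not affect the substance.
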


\subsection{Applications to tropical Jacobians and Prym varieties}
In this final section, we obtain the tropical Poincar\'e formula \cite[Theorem A]{GSJ} modulo pure-dimensionality of the image of powers of tropical Abel---Jacobi map \cite[Theorem 8.3]{GST}.

The authors of \cite{RZ} prove the $d=1$ case of a tropical Poincar\'e--Prym formula \cite[Theorem 4.25]{RZ} and conjecture a similar formula \cite[Conjecture 4.25]{RZ} for higher values of $d$. Using the generalized Poincar\'e formula, we prove \cite[Conjecture 4.24]{RZ} in its full generality.
\subsubsection{Tropical Poincar\'e formula} \label{subsubsectropJac}
We briefly review the main objects involved in the tropical Poincar\'e formula and refer the reader to \cite{GSJ} for further details. Let $\Gamma$ be a compact tropical curve (metric graph) of genus $g$ and let $\Jac(\Gamma)$ be the associated tropical Jacobian, which is a tropical abelian variety via the canonical polarization induced by the tropical Riemann theta divisor $\Theta$. Let $\Phi^d_q: \Gamma^d\rightarrow\Jac(\Gamma)$ be the tropical Abel--Jacobi map with respect to a chosen base point $q\in\Gamma$. Then the image $\widetilde{W}_d\coloneqq\Phi^d_q(\Gamma^d)$ is a polyhedral subset of $\Jac(\Gamma)$ of pure dimension $d$ by \cite[Theorem 8.3]{GST}. Let $[\widetilde{W}_d]$ and $[\Theta]$ be the tropical fundamental classes supported on the polyhedral subsets $\widetilde{W}_d$ and $\Theta$ of $\Jac(\Gamma)$. The tropical Poincar\'e formula \cite[Theorem A]{GSJ} relates these classes in tropical homology as follows:
\begin{equation}\label{Eqn:Poinformula}
    [\widetilde{W}_d]=\frac{\Phi^d_{q,*}[\Gamma^d]}{d!}=\frac{[\Theta]^{\cdot g-d}}{(g-d)!}\,.
\end{equation}
The first equality in \eqref{Eqn:Poinformula} above is a consequence of pure-dimensionality of $\widetilde{W}_d$ (see \cite[Proposition~8.3]{GSJ}). We show that the second equality in \eqref{Eqn:Poinformula} can be obtained from the generalized Poincar\'e formula Theorem~\ref{propgenpoin}. To see this, apply Theorem~\ref{propgenpoin} to the (ample) theta divisor class $[\Theta]\in \operatorname{CaCl}(\Jac(\Gamma))$ to obtain: 
\[\frac{c_{[\Theta]}^{\star d}}{d!}=\frac{[\Theta]^{\cdot g-d}}{(g-d)!}\,,\]
for any $0\leq d\leq g$. By definition, we have $c_{[\Theta]}=[\Theta]^{\cdot g-1}/(g-1)!$. By choosing appropriate bases of $H^{\ast, \ast}(\Jac(\Gamma))$ (see \cite{GSJ}*{\S9A}), one can see that $c_{[\Theta]}=\Phi_{q,\star}[\Gamma]=[\widetilde{W}_1]$ from \cite{GSJ}*{Proposition~9.2} and \cite{GSJ}*{Lemma~9.5}. Thus, it suffices to prove that $c_{[\Theta]}^{\star d}=(\Phi_{q,\star}[\Gamma])^{\star d}=\Phi^d_{q,\star}[\Gamma^d]$ for any $2\leq d\leq g$. But this follows from the following commutative diagram:
\begin{equation}\label{Eqn:Commdiag1}
\begin{tikzcd}
	{\prod_{i=1}^{d}H_{\bullet,\bullet}(\Gamma)} && {\prod_{i=1}^{d}H_{\bullet,\bullet}(\Jac(\Gamma))} \\
	\\
	{\bigotimes_{i=1}^{d}H_{\bullet,\bullet}(\Gamma)=H_{\bullet,\bullet}(\Gamma^d)} && {H_{\bullet,\bullet}(\Jac(\Gamma))}
	\arrow["{\prod_{i=1}^{d}\Phi_{q,\ast}}", from=1-1, to=1-3]
	\arrow[from=1-1, to=3-1]
	\arrow["\star", from=1-3, to=3-3]
	\arrow["{\Phi^d_{q,\ast}}"', from=3-1, to=3-3]
\end{tikzcd}
\end{equation}
The equality in the bottom left corner of the above commutative diagram is by the K\"unneth isomorphism (\cite[Theorem B]{GSH}). The vertical right arrow in Diagram~\eqref{Eqn:Commdiag1} is the $d$-fold Pontryagin product sending a $d$-tuple to the Pontryagin product of the elements of the $d$-tuple. The above diagram commutes since the composition of the top and right maps is multilinear, so it factors through the tensor product $\mathbb{Z}$-algebra on the bottom left. Using the fact that the left vertical map sends the $d$-tuple $(\operatorname{cyc}[\Gamma], \operatorname{cyc}[\Gamma],\dots,\operatorname{cyc}[\Gamma])$ to $\operatorname{cyc}[\Gamma^d]$ and the identity \eqref{Eqn:Poinformula} for $d=1$, it follows that, for any $1\leq d\leq g$, we have $c_{[\Theta]}^{\star d}=(\Phi_{q,\star}[\Gamma])^{\star d}=\Phi^d_{q,\star}[\Gamma^d]$. This yields the second equality of \eqref{Eqn:Poinformula} as a particular instance of Theorem~\eqref{propgenpoin}. 

\subsubsection{Tropical Poincar\'e--Prym formula}\label{subsubsectropprym} In this section, we prove \cite[Conjecture 4.24]{RZ} and establish the tropical Poincar\'e--Prym formula. We briefly recall the relevant background and refer the reader to \cite{RZ} for further details.

Let $\pi: \widetilde{\Gamma} \rightarrow \Gamma$ be a harmonic double cover of compact tropical curves. To $\pi$ one can associate a tropical Prym variety, analogous to the classical case, which has been defined and studied in \cite{JL}, \cite{LU}, \cite{LZ} and \cite{RZ23}. Following \cite{RZ}, we call this the divisorial Prym variety of the double cover and denote it by $\operatorname{Prym}_d(\widetilde{\Gamma}/\Gamma)$. In \cite[\S4.4]{RZ}, the authors introduce a variant of the tropical Prym variety, called the \emph{continuous} tropical Prym variety $\operatorname{Prym}_c(\widetilde{\Gamma}/\Gamma)$ associated to the double cover. The authors also demonstrate the relationship between $\operatorname{Prym}_d(\widetilde{\Gamma}/\Gamma)$ and $\operatorname{Prym}_c(\widetilde{\Gamma}/\Gamma)$ and propose evidence in support of $\operatorname{Prym}_c(\widetilde{\Gamma}/\Gamma)$ being a more natural and well-behaved object than $\operatorname{Prym}_d(\widetilde{\Gamma}/\Gamma)$.

Let $\iota: \widetilde{\Gamma}\rightarrow \widetilde{\Gamma}$ be the natural involution associated to the double cover $\pi:\widetilde{\Gamma}\rightarrow\Gamma$. Then, by \cite[Proposition~4.18(2)]{RZ}, the induced map $\operatorname{Id}-\iota:\Jac(\widetilde{\Gamma})\rightarrow\Jac(\widetilde{\Gamma})$ factorizes as:
\begin{equation}\label{Eqn:factorize}
\Jac(\widetilde{\Gamma})\xrightarrow{\epsilon}\operatorname{Prym}_c(\widetilde{\Gamma}/\Gamma)\xrightarrow{\gamma}\operatorname{Prym_d(\widetilde{\Gamma}/\Gamma})\xhookrightarrow{i}\Jac(\widetilde{\Gamma}/\Gamma)\,.
\end{equation}

By \cite[Proposition~4.18]{RZ}, there is a natural principal polarization $\zeta_c$ on $\operatorname{Prym_c(\widetilde{\Gamma}/\Gamma)}$ such that $\zeta\coloneqq2\zeta_c$ is the polarization induced on $\operatorname{Prym_c(\widetilde{\Gamma}/\Gamma)}$ from the canonical principal polarization on $\Jac(\widetilde{\Gamma})$ via $i\circ \gamma$. Analogous to tropical Abel--Jacobi maps, there also exist tropical $d$-fold Abel--Prym maps $\Psi^d_q:\widetilde{\Gamma}^d\rightarrow \operatorname{Prym}_c(\widetilde{\Gamma}/\Gamma)$ with respect to base point $q\in\widetilde{\Gamma}$ for all $d\geq 1$ (see \cite[Definition 4.21]{RZ}). Our final result is the following corollary of Theorem~\ref{propgenpoin}, which is a tropical analogue of algebraic Poincar\'e formula for Prym varieties. This resolves Conjecture~4.24 in \cite{RZ}.

\begin{corollary}[The tropical Poincar\'e--Prym formula]\label{conPP}
    Let $\widetilde{\Gamma} \rightarrow \Gamma$ be a double cover of tropical curves, let $q \in \widetilde{\Gamma}$ be a basepoint and let $g_0=\operatorname{dim} \operatorname{Prym}_c(\widetilde{\Gamma} / \Gamma)\coloneqq g(\widetilde{\Gamma})-g(\Gamma)$. Then:
$$
\frac{\Psi^d_{q,*}\operatorname{cyc}[\widetilde{\Gamma^d}]}{d!}=\frac{2^{d}}{\left(g_0-d\right) !}[\zeta_c]^{g_0-d} \in H_{d, d}(\operatorname{Prym}(\widetilde{\Gamma} / \Gamma))\,,
$$
for $1\leq d\leq g_0$, where $[\zeta_c]\in H_{g_0-1,g_0-1}(\operatorname{Prym}(\widetilde{\Gamma}/\Gamma))$ is the class of the principal polarization of $\operatorname{Prym}_c(\widetilde{\Gamma} / \Gamma)$.
\end{corollary}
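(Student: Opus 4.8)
The plan is to derive the formula from the generalized Poincar\'e formula (Theorem~\ref{propgenpoin}) applied to the principally polarized tropical abelian variety $(\operatorname{Prym}_c(\widetilde{\Gamma}/\Gamma),\zeta_c)$, mirroring the Jacobian argument of \S\ref{subsubsectropJac}. Since $\zeta_c$ is a principal polarization, the induced lattice map $\rho_{\zeta_c}$ is an isomorphism, so $\det(\zeta_c)=1$ and hence $h^0=1$ by Lemma~\ref{rrlemma}. Applying Theorem~\ref{propgenpoin} to $[\zeta_c]$ on $\operatorname{Prym}_c(\widetilde{\Gamma}/\Gamma)$ (of dimension $g_0$) with $p=g_0-d$ gives
\begin{equation}\label{eq:prympoin}
\frac{[\zeta_c]^{\cdot g_0-d}}{(g_0-d)!}=\frac{c_{[\zeta_c]}^{\star d}}{d!},\qquad c_{[\zeta_c]}=\frac{[\zeta_c]^{\cdot g_0-1}}{(g_0-1)!}.
\end{equation}
Thus the right-hand side of the claimed identity equals $2^d\,c_{[\zeta_c]}^{\star d}/d!$, and it suffices to prove the integral identity $\Psi^d_{q,*}\operatorname{cyc}[\widetilde{\Gamma^d}]=2^d\,c_{[\zeta_c]}^{\star d}$.

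Next I would reduce to the single-curve case by the multilinearity argument used for the Abel--Jacobi map. The Abel--Prym map is additive for the group law on $\operatorname{Prym}_c(\widetilde{\Gamma}/\Gamma)$, being built from the additive Abel--Jacobi map and the homomorphism $\epsilon$ of \eqref{Eqn:factorize}, so $\Psi^d_q=\mu\circ(\Psi_q\times\cdots\times\Psi_q)$ for the $d$-fold group sum $\mu$. The analogue of diagram~\eqref{Eqn:Commdiag1}, with $\Phi$ replaced by $\Psi$ and $\Jac(\Gamma)$ by $\operatorname{Prym}_c(\widetilde{\Gamma}/\Gamma)$, then shows that pushing forward the fundamental class is multiplicative for the Pontryagin product:
\begin{equation}\label{eq:prymmult}
\Psi^d_{q,*}\operatorname{cyc}[\widetilde{\Gamma^d}]=\big(\Psi_{q,*}\operatorname{cyc}[\widetilde{\Gamma}]\big)^{\star d}.
\end{equation}
Here one invokes the K\"unneth isomorphism (\cite[Theorem B]{GSH}) to identify $H_{\bullet,\bullet}(\widetilde{\Gamma^d})$ with the $d$-fold tensor power, together with the fact that the left vertical map sends $(\operatorname{cyc}[\widetilde{\Gamma}],\dots,\operatorname{cyc}[\widetilde{\Gamma}])$ to $\operatorname{cyc}[\widetilde{\Gamma^d}]$, exactly as in the Jacobian case.

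Finally, the base case $d=1$ is precisely the Poincar\'e--Prym formula already established by R\"ohrle--Zakharov in \cite[Theorem~4.25]{RZ}, which by~\eqref{eq:prympoin} reads $\Psi_{q,*}\operatorname{cyc}[\widetilde{\Gamma}]=\tfrac{2}{(g_0-1)!}[\zeta_c]^{g_0-1}=2\,c_{[\zeta_c]}$. Substituting into~\eqref{eq:prymmult} yields $\Psi^d_{q,*}\operatorname{cyc}[\widetilde{\Gamma^d}]=(2\,c_{[\zeta_c]})^{\star d}=2^d\,c_{[\zeta_c]}^{\star d}$, and dividing by $d!$ and reapplying~\eqref{eq:prympoin} gives the claim. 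I expect the reduction~\eqref{eq:prymmult} to be the main obstacle: one must verify carefully that the Abel--Prym map is genuinely additive for the group structure and that $\operatorname{cyc}[\widetilde{\Gamma^d}]$ is the external Pontryagin product of the factor classes, so that the diagram argument of \S\ref{subsubsectropJac} transfers verbatim. Granting this, the factor $2^d$ is a transparent consequence of the single relation $\zeta=2\zeta_c$ recorded after~\eqref{Eqn:factorize}, entering through the $d=1$ case.
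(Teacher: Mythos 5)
Your proposal is correct and follows essentially the same route as the paper: apply Theorem~\ref{propgenpoin} to the principal polarization $[\zeta_c]$ (where $h^0=1$), invoke the $d=1$ case from \cite[Theorem~4.25]{RZ}, and reduce the general case to $d=1$ via the K\"unneth/commutative-diagram argument showing $\Psi^d_{q,*}\operatorname{cyc}[\widetilde{\Gamma}^d]=\bigl(\Psi_{q,*}\operatorname{cyc}[\widetilde{\Gamma}]\bigr)^{\star d}$, exactly as in the Jacobian case of \S\ref{subsubsectropJac}. The step you flag as the main obstacle is handled in the paper by the same diagram~\eqref{Eqn:Commdiag1} transplanted to the Prym setting, with commutativity following from multilinearity of the composite map, so no additional verification beyond what you outline is needed.
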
 

\begin{proof}
The Conjecture is true for $d=1$ by \cite[Theorem 4.25]{RZ}, that is: 
\begin{equation}\label{Eqn:d=1PoinPrym}
\Psi_{q \star}\operatorname{cyc}[\widetilde{\Gamma}]=\frac{2}{(g_0-1)!}[\zeta_c]^{\cdot g_0-1}=:2c_{\zeta_c}\in H_{1,1}(\operatorname{Prym_c(\widetilde{\Gamma}/\Gamma))}.
\end{equation}
Using \eqref{Eqn:d=1PoinPrym} and Theorem~\ref{propgenpoin} we obtain, for all $1\leq d\leq g_0$:
\begin{equation}\label{Eqn:PoinPrym2}
    \frac{(\Psi_{q\star}\operatorname{cyc}[\widetilde{\Gamma}])^{\star d}}{d!}=\frac{(2c_{\zeta_c})^{\star d}}{d!}=2^d\frac{c^{\star d}_{\zeta_c}}{d!}=\frac{2^d}{(g_0-d)!}[\zeta_c]^{\cdot g_0-d} \in H_{d,d}(\operatorname{Prym}_c(\widetilde{\Gamma}/\Gamma))\,.
\end{equation}
Analogous to \eqref{Eqn:Commdiag1}, we have the following commutative diagram:
\begin{equation}\label{Eqn:Commdiag2}
\begin{tikzcd}
	{\prod_{i=1}^{d}H_{\bullet,\bullet}(\widetilde{\Gamma})} && {\prod_{i=1}^{d}H_{\bullet,\bullet}(\operatorname{Prym}_c(\widetilde{\Gamma}/\Gamma))} \\
	\\
	{\bigotimes_{i=1}^{d}H_{\bullet,\bullet}(\widetilde{\Gamma})=H_{\bullet,\bullet}(\widetilde{\Gamma}^d)} && {H_{\bullet,\bullet}(\operatorname{Prym}_c(\widetilde{\Gamma}/\Gamma))}
	\arrow["{\prod_{i=1}^{d}\Psi_{q,\ast}}", from=1-1, to=1-3]
	\arrow[from=1-1, to=3-1]
	\arrow["\star", from=1-3, to=3-3]
	\arrow["{\Psi^d_{q,\ast}}"', from=3-1, to=3-3]
\end{tikzcd}
\end{equation}
 As before, we observe that the left map sends the $d$-tuple $(\operatorname{cyc}[\widetilde{\Gamma}], \operatorname{cyc}[\widetilde{\Gamma}],\dots,\operatorname{cyc}[\widetilde{\Gamma}])$ to $\operatorname{cyc}[\widetilde{\Gamma}^d]$, whereby it follows that for any $1\leq d\leq g_0$, we have $(\Psi_{q,*}\operatorname{cyc}[\widetilde{\Gamma}])^{\star d}=\Psi^d_{q, *}\operatorname{cyc}[\widetilde{\Gamma}^d]$. Using this in \eqref{Eqn:PoinPrym2}, the equality follows.
\end{proof}
  
\begin{bibdiv}
\begin{biblist}

\bib{AN}{article}{
      author={Alexeev, Valery},
      author={Nakamura, Iku},
       title={On {M}umford's construction of degenerating abelian varieties},
        date={1999},
        ISSN={0040-8735,2186-585X},
     journal={Tohoku Math. J. (2)},
      volume={51},
      number={3},
       pages={399\ndash 420},
         url={https://doi.org/10.2748/tmj/1178224770},
      review={\MR{1707764}},
}

\bib{AR10}{article}{
      author={Allermann, Lars},
      author={Rau, Johannes},
       title={First steps in tropical intersection theory},
        date={2010},
        ISSN={0025-5874,1432-1823},
     journal={Math. Z.},
      volume={264},
      number={3},
       pages={633\ndash 670},
         url={https://doi.org/10.1007/s00209-009-0483-1},
      review={\MR{2591823}},
}

\bib{Beauv2}{incollection}{
      author={Beauville, A.},
       title={Quelques remarques sur la transformation de {F}ourier dans
  l'anneau de {C}how d'une vari\'et\'e{} ab\'elienne},
        date={1983},
   booktitle={Algebraic geometry ({T}okyo/{K}yoto, 1982)},
      series={Lecture Notes in Math.},
      volume={1016},
   publisher={Springer, Berlin},
       pages={238\ndash 260},
         url={https://doi.org/10.1007/BFb0099965},
      review={\MR{726428}},
}

\bib{Beauv1}{article}{
      author={Beauville, Arnaud},
       title={Sur l'anneau de {C}how d'une vari\'et\'e{} ab\'elienne},
        date={1986},
        ISSN={0025-5831,1432-1807},
     journal={Math. Ann.},
      volume={273},
      number={4},
       pages={647\ndash 651},
         url={https://doi.org/10.1007/BF01472135},
      review={\MR{826463}},
}

\bib{BerkovichBook}{book}{
      author={Berkovich, Vladimir~G.},
       title={Spectral theory and analytic geometry over non-{A}rchimedean
  fields},
      series={Mathematical Surveys and Monographs},
   publisher={American Mathematical Society, Providence, RI},
        date={1990},
      volume={33},
        ISBN={0-8218-1534-2},
         url={https://doi.org/10.1090/surv/033},
      review={\MR{1070709}},
}

\bib{CAV}{book}{
      author={Birkenhake, Christina},
      author={Lange, Herbert},
       title={Complex abelian varieties},
     edition={Second},
      series={Grundlehren der mathematischen Wissenschaften [Fundamental
  Principles of Mathematical Sciences]},
   publisher={Springer-Verlag, Berlin},
        date={2004},
      volume={302},
        ISBN={3-540-20488-1},
         url={https://doi.org/10.1007/978-3-662-06307-1},
      review={\MR{2062673}},
}

\bib{DC}{article}{
      author={Cartwright, Dustin},
       title={A specialization inequality for tropical complexes},
        date={2021},
        ISSN={0010-437X,1570-5846},
     journal={Compos. Math.},
      volume={157},
      number={5},
       pages={1051\ndash 1078},
         url={https://doi.org/10.1112/S0010437X21007053},
      review={\MR{4251610}},
}

\bib{FCAV}{book}{
      author={Faltings, Gerd},
      author={Chai, Ching-Li},
       title={Degeneration of abelian varieties},
      series={Ergebnisse der Mathematik und ihrer Grenzgebiete (3) [Results in
  Mathematics and Related Areas (3)]},
   publisher={Springer-Verlag, Berlin},
        date={1990},
      volume={22},
        ISBN={3-540-52015-5},
         url={https://doi.org/10.1007/978-3-662-02632-8},
        note={With an appendix by David Mumford},
      review={\MR{1083353}},
}

\bib{FR13}{article}{
      author={Fraçois, Georges},
      author={Rau, Johannes},
       title={The diagonal of tropical matroid varieties and cycle
  intersections},
        date={2013},
        ISSN={0010-0757,2038-4815},
     journal={Collect. Math.},
      volume={64},
      number={2},
       pages={185\ndash 210},
         url={https://doi.org/10.1007/s13348-012-0072-1},
      review={\MR{3041763}},
}

\bib{Fran13}{article}{
      author={François, Georges},
       title={Cocycles on tropical varieties via piecewise polynomials},
        date={2013},
        ISSN={0002-9939,1088-6826},
     journal={Proc. Amer. Math. Soc.},
      volume={141},
      number={2},
       pages={481\ndash 497},
         url={https://doi.org/10.1090/S0002-9939-2012-11359-0},
      review={\MR{2996952}},
}

\bib{GHPAG}{book}{
      author={Griffiths, Phillip},
      author={Harris, Joseph},
       title={Principles of algebraic geometry},
      series={Pure and Applied Mathematics},
   publisher={Wiley-Interscience [John Wiley \& Sons], New York},
        date={1978},
        ISBN={0-471-32792-1},
      review={\MR{507725}},
}

\bib{GSH}{article}{
      author={Gross, Andreas},
      author={Shokrieh, Farbod},
       title={A sheaf-theoretic approach to tropical homology},
        date={2023},
        ISSN={0021-8693,1090-266X},
     journal={J. Algebra},
      volume={635},
       pages={577\ndash 641},
         url={https://doi.org/10.1016/j.jalgebra.2023.08.014},
      review={\MR{4637248}},
}

\bib{GSJ}{article}{
      author={Gross, Andreas},
      author={Shokrieh, Farbod},
       title={Tautological cycles on tropical {J}acobians},
        date={2023},
        ISSN={1937-0652,1944-7833},
     journal={Algebra Number Theory},
      volume={17},
      number={4},
       pages={885\ndash 921},
         url={https://doi.org/10.2140/ant.2023.17.885},
      review={\MR{4582532}},
}

\bib{GST}{article}{
      author={Gross, Andreas},
      author={Shokrieh, Farbod},
      author={T\'othm\'er\'esz, Lilla},
       title={Effective divisor classes on metric graphs},
        date={2022},
        ISSN={0025-5874,1432-1823},
     journal={Math. Z.},
      volume={302},
      number={2},
       pages={663\ndash 685},
         url={https://doi.org/10.1007/s00209-022-03056-x},
      review={\MR{4480205}},
}

\bib{GublerAV}{article}{
      author={Gubler, Walter},
       title={Non-{A}rchimedean canonical measures on abelian varieties},
        date={2010},
        ISSN={0010-437X,1570-5846},
     journal={Compos. Math.},
      volume={146},
      number={3},
       pages={683\ndash 730},
         url={https://doi.org/10.1112/S0010437X09004679},
      review={\MR{2644932}},
}

\bib{Horn}{book}{
      author={Horn, Roger~A.},
      author={Johnson, Charles~R.},
       title={Matrix analysis},
     edition={Second},
   publisher={Cambridge University Press, Cambridge},
        date={2013},
        ISBN={978-0-521-54823-6},
      review={\MR{2978290}},
}

\bib{IKMZ}{article}{
      author={Itenberg, Ilia},
      author={Katzarkov, Ludmil},
      author={Mikhalkin, Grigory},
      author={Zharkov, Ilia},
       title={Tropical homology},
        date={2019},
        ISSN={0025-5831,1432-1807},
     journal={Math. Ann.},
      volume={374},
      number={1-2},
       pages={963\ndash 1006},
         url={https://doi.org/10.1007/s00208-018-1685-9},
      review={\MR{3961331}},
}

\bib{JL}{article}{
      author={Jensen, David},
      author={Len, Yoav},
       title={Tropicalization of theta characteristics, double covers, and
  {P}rym varieties},
        date={2018},
        ISSN={1022-1824,1420-9020},
     journal={Selecta Math. (N.S.)},
      volume={24},
      number={2},
       pages={1391\ndash 1410},
         url={https://doi.org/10.1007/s00029-017-0379-6},
      review={\MR{3782424}},
}

\bib{JRS}{article}{
      author={Jell, Philipp},
      author={Rau, Johannes},
      author={Shaw, Kristin},
       title={Lefschetz {$(1,1)$}-theorem in tropical geometry},
        date={2018},
        ISSN={2491-6765},
     journal={\'Epijournal G\'eom. Alg\'ebrique},
      volume={2},
       pages={Art. 11, 27},
         url={https://doi.org/10.46298/epiga.2018.volume2.4126},
      review={\MR{3894860}},
}

\bib{JSS}{article}{
      author={Jell, Philipp},
      author={Shaw, Kristin},
      author={Smacka, Jascha},
       title={Superforms, tropical cohomology, and {P}oincar\'e{} duality},
        date={2019},
        ISSN={1615-715X,1615-7168},
     journal={Adv. Geom.},
      volume={19},
      number={1},
       pages={101\ndash 130},
         url={https://doi.org/10.1515/advgeom-2018-0006},
      review={\MR{3903579}},
}

\bib{Kleiman}{incollection}{
      author={Kleiman, S.~L.},
       title={Algebraic cycles and the {W}eil conjectures},
        date={1968},
   booktitle={Dix expos\'es sur la cohomologie des sch\'emas},
      series={Adv. Stud. Pure Math.},
      volume={3},
   publisher={North-Holland, Amsterdam},
       pages={359\ndash 386},
      review={\MR{292838}},
}

\bib{LU}{article}{
      author={Len, Yoav},
      author={Ulirsch, Martin},
       title={Skeletons of {P}rym varieties and {B}rill-{N}oether theory},
        date={2021},
        ISSN={1937-0652,1944-7833},
     journal={Algebra Number Theory},
      volume={15},
      number={3},
       pages={785\ndash 820},
         url={https://doi.org/10.2140/ant.2021.15.785},
      review={\MR{4261102}},
}

\bib{LZ}{article}{
      author={Len, Yoav},
      author={Zakharov, Dmitry},
       title={Kirchhoff's theorem for {P}rym varieties},
        date={2022},
        ISSN={2050-5094},
     journal={Forum Math. Sigma},
      volume={10},
       pages={Paper No. e11, 54},
         url={https://doi.org/10.1017/fms.2021.75},
      review={\MR{4382460}},
}

\bib{Mukai}{article}{
      author={Mukai, Shigeru},
       title={Duality between {$D(X)$}\ and {$D(\hat X)$}\ with its application
  to {P}icard sheaves},
        date={1981},
        ISSN={0027-7630,2152-6842},
     journal={Nagoya Math. J.},
      volume={81},
       pages={153\ndash 175},
         url={http://projecteuclid.org/euclid.nmj/1118786312},
      review={\MR{607081}},
}

\bib{MumAV}{book}{
      author={Mumford, David},
       title={Abelian varieties},
      series={Tata Institute of Fundamental Research Studies in Mathematics},
   publisher={Tata Institute of Fundamental Research, Bombay; by Hindustan Book
  Agency, New Delhi},
        date={2008},
      volume={5},
        ISBN={978-81-85931-86-9; 81-85931-86-0},
        note={With appendices by C. P. Ramanujam and Yuri Manin, Corrected
  reprint of the second (1974) edition},
      review={\MR{2514037}},
}

\bib{Mum1}{article}{
      author={Mumford, David},
       title={An analytic construction of degenerating abelian varieties over
  complete rings},
        date={1972},
        ISSN={0010-437X,1570-5846},
     journal={Compositio Math.},
      volume={24},
       pages={239\ndash 272},
      review={\MR{352106}},
}

\bib{MZ14}{incollection}{
      author={Mikhalkin, Grigory},
      author={Zharkov, Ilia},
       title={Tropical eigenwave and intermediate {J}acobians},
        date={2014},
   booktitle={Homological mirror symmetry and tropical geometry},
      series={Lect. Notes Unione Mat. Ital.},
      volume={15},
   publisher={Springer, Cham},
       pages={309\ndash 349},
         url={https://doi.org/10.1007/978-3-319-06514-4_7},
      review={\MR{3330789}},
}

\bib{RZ23}{unpublished}{
      author={R\"ohrle, Felix},
      author={Zakharov, Dmitry},
       title={A matroidal perspective on the tropical prym variety},
        date={2023},
         url={https://arxiv.org/abs/2311.09872},
        note={Preprint available at
  \href{https://arxiv.org/abs/2311.09872}{{\tt ar{X}iv:2311.09872}}, Accepted
  for publication in Proceedings of ``Algebraic Geometry, Mathematical Physics,
  and Solitons'' conference (AGMPS-22)},
}

\bib{RZ}{unpublished}{
      author={R\"ohrle, Felix},
      author={Zakharov, Dmitry},
       title={The tropical $n$-gonal construction},
        date={2024},
         url={https://arxiv.org/abs/2210.02267},
        note={Preprint available at
  \href{https://arxiv.org/abs/2210.02267}{{\tt ar{X}iv:2210.02267}}, Accepted
  for publication in Algebraic Combinatorics},
}

\bib{Shaw13}{article}{
      author={Shaw, Kristin~M.},
       title={A tropical intersection product in matroidal fans},
        date={2013},
        ISSN={0895-4801,1095-7146},
     journal={SIAM J. Discrete Math.},
      volume={27},
      number={1},
       pages={459\ndash 491},
         url={https://doi.org/10.1137/110850141},
      review={\MR{3032930}},
}

\bib{KS}{article}{
      author={Sumi, Ken},
       title={Tropical theta functions and {R}iemann-{R}och inequality for
  tropical {A}belian surfaces},
        date={2021},
        ISSN={0025-5874,1432-1823},
     journal={Math. Z.},
      volume={297},
      number={3-4},
       pages={1329\ndash 1351},
         url={https://doi.org/10.1007/s00209-020-02559-9},
      review={\MR{4229604}},
}

\bib{vdGK}{article}{
      author={van~der Geer, Gerard},
      author={Kouvidakis, Alexis},
       title={The rank-one limit of the {F}ourier-{M}ukai transform},
        date={2010},
        ISSN={1431-0635,1431-0643},
     journal={Doc. Math.},
      volume={15},
       pages={747\ndash 763},
      review={\MR{2735987}},
}

\end{biblist}
\end{bibdiv}

\vspace{3mm}

\end{document}